\newcommand*{\Scale}[2][4]{\scalebox{#1}{$#2$}}%
\newcommand{\verteq}{\rotatebox{90}{$\,=$}}
\newcommand{\equalto}[2]{\underset{\scriptstyle\overset{\mkern4mu\verteq}{#2}}{#1}}
  \theoremstyle{plain}
    \newtheorem{thm}{Theorem}[section]
    \newtheorem{prop}[thm]{Proposition}
   \newtheorem{lemma}[thm]{Lemma}
    \newtheorem{subsec}[thm]{}
\theoremstyle{definition}
    \newtheorem{defn}[thm]{Definition}
        \newtheorem{remark}[thm]{Remark}
    \newtheorem{exam}[thm]{Example}
    \newtheorem{notation}[thm]{Notation}
\theoremstyle{remark}
\title{}
\author{}
\date{}
\begin{document}

\title{Controlling structures, deformations and homotopy theory for averaging algebras}

\author{Apurba Das}
\address{Department of Mathematics,
Indian Institute of Technology, Kharagpur 721302, West Bengal, India.}
\email{apurbadas348@gmail.com, apurbadas348@maths.iitkgp.ac.in}

\begin{abstract}
An averaging operator on an associative algebra $A$ is an algebraic abstraction of the time average operator on the space of real-valued functions defined in time-space. In this paper, we consider relative averaging operators on a bimodule $M$ over an associative algebra $A$. A relative averaging operator induces a diassociative algebra structure on the space $M$. The full data consisting of an associative algebra, a bimodule and a relative averaging operator is called a relative averaging algebra. We define bimodules over a relative averaging algebra that fits with the representations of diassociative algebras. We construct a graded Lie algebra and a $L_\infty$-algebra that are respectively controlling algebraic structures for a given relative averaging operator and relative averaging algebra. We also define cohomologies of relative averaging operators and relative averaging algebras and find a long exact sequence connecting various cohomology groups. As applications, we study deformations and abelian extensions of relative averaging algebras. Finally, we define homotopy relative averaging algebras and show that they induce homotopy diassociative algebras. 
\end{abstract}

\maketitle

%\curraddr{}
%\email{}

%\subjclass[2010]{}
%\keywords{}

\medskip

\noindent {2020 MSC classifications: 16D20, 16W99, 16E40, 16S80.} 

\noindent {Keywords: Averaging algebras, Diassociative algebras, $L_\infty$-algebras, Deformations, Homotopy structures.} 

%\medskip

%\noindent {\sf Date of resubmission:} July 26, 2021.

\thispagestyle{empty}

\tableofcontents

%\vspace{0.2cm}

\medskip

\section{Introduction}\label{sec-intro}
The notion of averaging operator was first implicitly studied by O. Reynolds in 1895 \cite{rey} in the turbulence theory of fluid dynamics. In the mathematical study of turbulence theory, such an operator appears as the time average operator of real-valued functions defined in time-space
\begin{align*}
f(x,t) \mapsto \overline{f}(x,t)  = \lim_{T \to \infty} \frac{1}{2T} \int_{-T}^T f(x, t + \tau) d \tau.
\end{align*}
The explicit description of an averaging operator was first defined by Kamp\'{e} de F\'{e}riet \cite{kampe}. Let $A$ be an associative algebra. A linear map $P: A \rightarrow A$ is said to be an averaging operator on $A$ if 
\begin{align}\label{paper-first}
P(a) \cdot P(b) = P (P(a) \cdot b) = P (a \cdot P(b)), \text{ for } a, b \in A.
\end{align}
A pair $(A, P)$ consisting of an associative algebra $A$ and an averaging operator $P : A \rightarrow A$ is called an averaging algebra.
In the last century, most studies on averaging operators had been done for various areas of functional analysis and applied mathematics. For the convenience of the reader, here we mention a few. In \cite{birkhoff} G. Birkhoff showed that a positive bounded projection operator on the Banach algebra $C(X)$ of real-valued continuous functions on a compact Hausdorff space $X$, is an idempotent averaging operator. Later, J. L. Kelly \cite{kelly} characterizes idempotent averaging operators on the algebra $C_\infty(X)$ of real-valued continuous functions on a locally compact Hausdorff space $X$ that vanish at the infinity. In \cite{gam-mill,mill}, J. L. B. Gamlen and J. B. Miller discussed spectrum and resolvent sets of averaging operators on Banach algebras. Besides all these, averaging operators are also studied in connection with probability theory. In \cite{moy} S.-T. C. Moy finds relations between averaging operators, conditional expectations and general integration theory.

\medskip

In the algebraic study of averaging operators, B. Brainerd \cite{brain} considered the conditions for which an averaging operator can be realized as a generalization of the integral operator on the ring of real-valued measurable functions. In 2000, W. Cao \cite{cao} studied averaging operators from algebraic and combinatorial points of view. In particular, he studied free commutative averaging algebras and described the induced Lie and Leibniz algebras. During the same period, J.-L. Loday \cite{loday-di} introduced a notion of diassociative algebra to study the universal enveloping algebra of a Leibniz algebra. A diassociative algebra is a vector space equipped with two associative multiplications satisfying three other associative-like compatibilities. M. Aguiar \cite{aguiar-pre} showed that an averaging operator on an associative algebra induces a diassociative algebra structure. The general algebraic study of averaging operators on any binary operad and their relations with bisuccessors, duplicators and Rota-Baxter operators on operad was systematically developed in \cite{pei-rep,pei-rep2,bai-imrn}. More recently, J. Pei and L. Guo \cite{pei-guo} constructed free associative averaging algebras using a class of bracketed words, called averaging words, and discovered their relations with Schr\"{o}der numbers. Averaging operators also appeared in the context of Lie algebras. They are often called embedding tensors and find important connections with Leibniz algebras, tensor hierarchies and higher gauge theories \cite{bon,lavau,strobl}. In \cite{kotov} Kotov and Strobl construct a $L_\infty$-algebra from an embedding tensor that explains the tensor hierarchy of the bosonic sector of gauged supergravity theories. In \cite{sheng-embedding} Y. Sheng, R. Tang and C. Zhu studied the cohomology and deformations of embedding tensors by considering the controlling algebras. See also \cite{das-rota,sheng-guo} for cohomological study of Rota-Baxter operators.

\medskip

In this paper, we consider the notion of a relative averaging operator as a generalization of an averaging operator. Let $A$ be an associative algebra and $M$ be an $A$-bimodule. A linear map $P : M \rightarrow A$ is called a relative averaging operator (on $M$ over the algebra $A$) if it satisfies the identity (\ref{avg-iden}), which generalizes (\ref{paper-first}). A triple $(A, M, P)$ consisting of an associative algebra $A$, an $A$-bimodule $M$ and a relative averaging operator $P$ is called a relative averaging algebra. For our convenience, we denote a relative averaging algebra $(A,M, P)$ by the notation $M \xrightarrow{P} A$. We give some characterizations of a relative averaging operator. We also construct the free relative averaging algebra over any $2$-term chain complex $V \xrightarrow{f} W$. Next, we show that a relative averaging algebra naturally induces a diassociative algebra structure. Conversely, any diassociative algebra is induced from a suitable relative averaging algebra constructed from the given diassociative algebra. We also define a notion of bimodule over a relative averaging algebra and construct the corresponding semidirect product. We show that a bimodule over a relative averaging algebra gives rise to some representations of the induced diassociative algebra.

\medskip

Then we first focus on the cohomology of relative averaging operators. Given an associative algebra $A$ and an $A$-bimodule $M$, we construct a graded Lie algebra that characterizes relative averaging operators as its Maurer-Cartan elements. This graded Lie algebra is obtained by applying the derived bracket construction to the graded Lie algebra constructed by Majumdar and Mukherjee \cite{maj-muk}. Using this characterization, we can define the cohomology of a relative averaging operator. We further show that this cohomology can be seen as the cohomology of the induced diassociative algebra with coefficients in a suitable representation. We also remark that this cohomology can be used to study formal deformations of the relative averaging operator by keeping the underlying algebra and the bimodule intact. Next, we focus on the cohomology of relative averaging algebras. To do this, we first construct a $L_\infty$-algebra that characterizes relative averaging algebras as its Maurer-Cartan elements. This helps us to define the cohomology of a relative averaging algebra (with coefficients in the {adjoint bimodule}). Subsequently, we consider the cohomology of a relative averaging algebra with coefficients in an arbitrary bimodule.

%Then we define the cohomology of a relative averaging algebra with coefficients in an arbitrary bimodule by considering a suitable subcomplex of the {semidirect prodct} relative averaging algebra.

\medskip

In the next, we give two applications of our cohomology theory of relative averaging algebras. At first, we consider deformations of a relative averaging algebra $M \xrightarrow{P} A$, where we simultaneously deform the associative multiplication on $A$, the $A$-bimodule structure on $M$ and the relative averaging operator $P$. In particular, we consider formal and infinitesimal deformations of a relative averaging algebra. Our main result in deformation theory classifies the equivalence classes of infinitesimal deformations of a relative averaging algebra $M \xrightarrow{P} A$ by the second cohomology group ${H^2_\mathrm{rAvg}( M \xrightarrow{P} A)}$. Another application of the cohomology theory is to classify abelian extensions. More precisely, we consider abelian extensions of a relative averaging algebra $M \xrightarrow{P} A$ by a bimodule and show that isomorphism classes of such abelian extensions are classified by the second cohomology group of the relative averaging algebra $M \xrightarrow{P} A$ with coefficients in the bimodule.

\medskip

In the final part of the paper, we define homotopy relative averaging operators. Like relative averaging operators are defined on bimodules over an associative algebra, a homotopy relative averaging operator is defined on a representation space over an $A_\infty$-algebra. Given an $A_\infty$-algebra and a representation of it, we construct a suitable $L_\infty$-algebra $(\mathfrak{a} , \{ l_k \}_{k=1}^\infty)$. This $L_\infty$-algebra is a generalization of the graded Lie algebra that characterizes relative averaging operators as Maurer-Cartan elements. Motivated by this, we define a homotopy relative averaging operator as a Maurer-Cartan element of the $L_\infty$-algebra $(\mathfrak{a} , \{ l_k \}_{k=1}^\infty)$. A triple consisting of an $A_\infty$-algebra, a representation and a homotopy relative averaging operator is called a homotopy relative averaging algebra. We show that a homotopy relative averaging algebra induces a $Diass_\infty$-algebra (strongly homotopy diassociative algebra) structure. This generalizes our previous result that a relative averaging algebra induces a diassociative algebra. Finally, we show that a $Diass_\infty$-algebra gives rise to a homotopy relative averaging algebra.

\medskip

\noindent {\bf Note.} It is important to mention that Wang and Zhou \cite{wang-av} recently considered the cohomology and deformation theory of an averaging algebra. In their approach, they only considered the fact that an averaging algebra induces two new associative algebra structures. However, they have not used the fact that the induced two associative structures form a diassociative algebra. In the present paper, we first point out the intimate relationships between averaging algebras and diassociative algebras (see Proposition \ref{l-adj-fun}). In our knowledge, diassociative algebra is the key object for the study of (relative) averaging algebras. In Proposition \ref{n-mult}, we show that the cohomology of a relative averaging operator can be seen as the cohomology of the induced diassociative algebra with coefficients in a suitable representation. In Theorem \ref{n-func}, we find a cohomological relation between relative averaging operators and induced diassociative algebras. Since Wang and Zhou didn't consider the full diassociative algebra (as the induced structure), the above important results no longer exist in their approach. In our paper, we showed many other results which indicate that diassociative algebras are required to study relative averaging operators (cf. Proposition \ref{grph}, Theorem \ref{mc-thm-opp}, Theorem \ref{ll-iinnff}, Theorem \ref{mc-ravg-thm}). Thus, we believe that our approach and the constructions (including all the graded Lie algebras and $L_\infty$-algebras) are appropriate to deal with relative averaging algebras.

\medskip

\noindent {\bf Organization of our paper.} In Section \ref{sec-2}, we recall some basic preliminaries on the cohomology of diassociative algebras. In Section \ref{sec-3}, we consider relative averaging operators and relative averaging algebras which are the main objects of our study. We also define and study bimodules over a relative averaging algebra. The Maurer-Cartan characterizations and cohomology of relative averaging operators and relative averaging algebras are respectively considered in Sections \ref{sec-4} and \ref{sec-5}. Applications of cohomology include deformations and abelian extensions of relative averaging algebras which are respectively studied in Sections \ref{sec-6} and \ref{sec-7}. Finally, in Section \ref{sec-8}, we define homotopy relative averaging algebras and find relations with $Diass_\infty$-algebras.

\medskip

All vector spaces, linear maps and tensor products are over a field ${\bf k}$ of characteristic $0$.

\medskip

%\medskip

%\medskip

%\textcolor{red}{What is the Koszul sign?}

%\textcolor{red}{Guodong zhou's paper on averaging algebras}

%\textcolor{red}{change the notation for pairing maps as we have used l and r in averaging operator cohomology}

\medskip

\section{Background on diassociative algebras}\label{sec-2}
In this section, we recall some necessary background on diassociative algebras. In particular, we describe the cohomology theory of diassociative algebras. Our main references are \cite{loday-di,frab,maj-muk}.

 The notion of diassociative algebras was introduced by Loday in the study of Leibniz algebras \cite{loday-di}. The cohomology theory of diassociative algebras was developed by Frabetti \cite{frab} and further studied by Majumdar and Mukherjee \cite{maj-muk}.

\begin{defn}
A {\bf diassociative algebra} is a vector space $D$ equipped with two bilinear operations $\dashv, \vdash ~: D \times D \rightarrow D$ that satisfy the following five identities
\begin{align}\label{di-id}
\begin{cases}
&(a \dashv b ) \dashv c = a \dashv (b  \dashv c) =  a \dashv (b \vdash c),\\
&(a \vdash b) \dashv c = a \vdash (b \dashv c),\\
&(a \dashv b) \vdash c = (a \vdash b) \vdash c = a \vdash (b \vdash c),
\end{cases}
\end{align}
for $a,b, c \in D$. A diassociative algebra as above may be denoted by the triple $(D, \dashv, \vdash)$ or simply by $D$.
\end{defn}

It follows from the above definition that both the bilinear operations $\dashv$ and $\vdash$ in a diassociative algebra are associative products. Moreover, these two associative products additionally satisfy three associative-like identities. Thus, any associative algebra $A$ can be regarded as a diassociative algebra in which both the operations $\dashv$ and $\vdash$ are the given associative multiplication on $A$.

\medskip

Let $(D, \dashv, \vdash)$ be a diassociative algebra. A {\bf representation} of the diassociative algebra is a vector space $M$ equipped with four bilinear operations (called action maps)
\begin{align*}
\dashv ~: D \times M \rightarrow M, \quad \vdash ~: D \times M \rightarrow M, \quad \dashv ~: M \times D \rightarrow M ~ \text{ and } ~ \vdash ~: M \times D \rightarrow M
\end{align*}
that satisfy fifteen identities, where each set of five identities corresponds to the identities in (\ref{di-id}) with exactly one of $x, y, z$ replaced by an element of $M$. It follows that any diassociative algebra $(D, \dashv, \vdash)$ is a representation of itself, called the adjoint representation.

\medskip

Before we recall the cohomology of diassociative algebras, we need some notations about planar binary trees. A planar binary tree with $n$-vertices (often called an $n$-tree) is a planar tree with $(n+1)$ leaves, one root and each vertex trivalent. Let $Y_n$ be the set of all $n$-trees (for $n \geq 1$) and $Y_0$ be the set consisting of a root only, i.e.

\medskip

\medskip

\medskip

\begin{center}
$Y_0$ = \Bigg\{ \begin{tikzpicture}[scale=0.2]    
\draw (0,0) -- (0,4);
\end{tikzpicture} \Bigg\}, \qquad $Y_1$ = \Bigg\{  
\begin{tikzpicture}[scale=0.2]
\draw (-4,-2)-- (-2,0); \draw (-4,-2) -- (-4,-4); \draw (-6,0) -- (-4,-2);
\end{tikzpicture}
\Bigg\}, \qquad 
$Y_2$ = \Bigg\{ 
\begin{tikzpicture}[scale=0.2]
\draw (6,0) -- (8,-2);    \draw (8,-2) -- (10,0);     \draw (8,-2) -- (8,-4);     \draw (9,-1) -- ( 8,0);
\end{tikzpicture} ~,~ 
\begin{tikzpicture}[scale=0.2]
\draw (0,0) -- (2,-2); \draw (2,-2) -- (4,0); \draw (2,-2) -- (2,-4); \draw (1,-1) -- (2,0); 
\end{tikzpicture}
\Bigg\}, \\ \medskip \medskip $Y_3$ = \Bigg\{
\begin{tikzpicture}[scale=0.2]  
   \draw (36,0) -- (38, -2) ; \draw (38, -2) -- (38, -4) ; \draw (38, -2) -- (40, 0); \draw (37.33, 0) -- ( 38.67, - 1.33) ; \draw (38.66, 0) -- (39.34, -0.66);	
\end{tikzpicture} ~,~
\begin{tikzpicture}[scale=0.2]  
  \draw (30,0) -- (32,-2); \draw (32, -2) -- (32, -4); \draw (32,-2) -- (34, 0); \draw (31.33, 0) -- (32.67 , -1.33) ; \draw (32.66, 0) -- (32, -0.66); 
\end{tikzpicture} ~,~
\begin{tikzpicture}[scale=0.2] 
  \draw (24,0)-- (26,-2); \draw (26,-2)-- (26,-4); \draw (26,-2) -- (28,0); \draw (25.33, 0) -- (24.66, -0.66); \draw (26.66, 0) -- (27.34, -0.66); 
\end{tikzpicture} ~,~
\begin{tikzpicture}[scale=0.2]
 \draw (18,0) -- (20,-2); \draw (20, -2) -- (20, -4); \draw (20, -2) -- (22,0); \draw (19.33, -1.33) -- (20.66, 0); \draw (19.33, 0) -- (20, -0.66);
\end{tikzpicture} ~,~
 \begin{tikzpicture}[scale=0.2]
	     \draw (12,0)-- (14,-2); \draw (14,-2) -- (14,-4); \draw (14, -2) -- (16,0); \draw (12.7, - 0.7) -- (13.3333, 0) ; \draw (13.33, -1.33) -- (14.66, 0);
	     \end{tikzpicture} \Bigg\}, \quad \text{etc.}
\end{center}

\medskip

\medskip

\noindent Note that the cardinality of $Y_n$ is given by the $n$-th Catalan number $\frac{(2n) !}{(n+1)!~ n!}$. The grafting of a $m$-tree $y_1 \in Y_m$ and a $n$-tree $y_2 \in Y_n$ is a $(m+n+1)$-tree $y_1 \vee y_2 \in Y_{m+n+1}$ obtained by joining the roots of $y_1$ and $y_2$ to a new vertex and creating a new root from that vertex. Given an $n$-tree $y \in Y_n$, we label the $n+1$ leaves of $y$ by $\{ 0, 1, 2, \ldots, n \}$ from left to right. For each $n \geq 1$ and $0 \leq i \leq n$, there is a map (called the $i$-th face map) $d_i : Y_n \rightarrow Y_{n-1}$, $y \mapsto d_i y$ which is obtained by removing the $i$-th leaf of $y$. Also there are maps $\star_i : Y_n \rightarrow \{ \dashv, \vdash \}$, $y \mapsto \star_i^y$ (for $0 \leq i \leq n$) which are defined as follows:

\medskip

\medskip

\medskip

\begin{center}
\begin{tabular}{|c|c|}
\hline
$\star_0^y =$  & $\dashv ~~~~ \text{ if } y= | \vee y_1 \text{ for some } (n-1)\text{-tree } y_1$, \\
 & $\vdash ~~~~ \text{ otherwise},$ \\ \hline 
$\star_i^y =$ & $ ~\dashv ~~~~ \text{ if the } i\text{-th leaf of } y \text{ is oriented like } `\setminus',$ \\
(for $1 \leq i \leq n-1$) & $~ \vdash ~~~~ \text{ if the } i\text{-th leaf of } y \text{ is oriented like } `/',$ \\ \hline
$\star_n^y =$ & $ \vdash ~~~~ \text{ if } y= y_1 \vee | \text{ for some } (n-1)\text{-tree } y_1,$ \\
 & $\dashv ~~~~ \text{ otherwise}.$\\
\hline
\end{tabular}
\end{center}

\medskip

\medskip

\medskip

Let $(D, \dashv, \vdash)$ be a diassociative algebra and $M$ be a representation of it. For each $n \geq 0$, we define the space $CY^n (D,M)$ of all $n$-cochains by $CY^n (D,M) := \mathrm{Hom}({\bf k}[Y_n] \otimes D^{\otimes n}, M)$. Then there is a map $\delta_\mathrm{Diass} : CY^n(D,M) \rightarrow CY^{n+1}(D,M)$ given by
\begin{align*}
\delta_\mathrm{Diass}(f)& (y; a_1, \ldots, a_{n+1}) = a_1 \star_0^y f (d_0 y; a_2, \ldots, a_{n+1}) \\
&+ \sum_{i=1}^n (-1)^i ~ f (d_i y; a_1, \ldots, a_i \star_i^y a_{i+1}, \ldots, a_{n+1}) 
~+~ (-1)^{n+1} ~ f(d_{n+1} y; a_1, \ldots, a_n) \star_{n+1}^y a_{n+1},
\end{align*}
for $f \in CY^n (D,M)$, $y \in Y_{n+1}$ and $a_1, \ldots, a_{n+1} \in D.$ Then it has been shown by Frabetti \cite{frab} that $(\delta_\mathrm{Diass})^2 = 0$. In other words, $\{ CY^\bullet(D, M), \delta_\mathrm{Diass} \}$ is a cochain complex. The corresponding cohomology is called the {\bf cohomology} of the diassociative algebra $(D, \dashv, \vdash)$ with coefficients in the representation $M$. We denote the $n$-th cohomology group by $H^n_\mathrm{Diass} (D,M)$.

\medskip

In \cite{maj-muk} Majumdar and Mukherjee constructed a graded Lie algebra whose Maurer-Cartan elements correspond to diassociative algebra structures on a given vector space (see also \cite{yau}). To describe their graded Lie bracket in a more simple form, we define maps $R_0^{m; i, n} : Y_{m+n-1} \rightarrow Y_m$ and $R_i^{m; i, n} : Y_{m+n-1} \rightarrow Y_n$ (for $m, n \geq 1$ and $1 \leq i \leq m$) by
\begin{align}
R_0^{m; i, n} (y) = \widehat{d_0} \circ \widehat{d_1} \circ \cdots \circ \widehat{d_{i-1}} \circ d_i \circ \cdots \circ d_{i+n-2} \circ \widehat{d_{i + n-1}} \circ \cdots \circ \widehat{d_{m+n-1}} (y), \label{r0-maps}\\
R_i^{m; i, n} (y) = d_0 \circ d_1 \circ \cdots \circ d_{i-2} \circ \widehat{d_{i-1}} \circ \cdots \circ \widehat{d_{i+n-1}} \circ d_{i+n} \circ \cdots \circ d_{m+n-1} (y), \label{ri-maps}
\end{align}
where ~$~\widehat{~}~$~ means that the term is missing from the expression. Let $D$ be a vector space (not necessarily a diassociative algebra). They showed that the graded vector space $CY^\bullet (D, D) = \oplus_{n = 1}^\infty CY^n (D,D) = \oplus_{n = 1}^\infty \mathrm{Hom}({\bf k}[Y_n] \otimes D^{\otimes n}, D)$ inherits a degree $-1$ graded Lie bracket (which we call the Majumdar-Mukherjee bracket) given by
\begin{align}\label{mm-circ}
[f, g]_\mathsf{MM} := \big( \sum_{i=1}^m (-1)^{(i-1)(n-1)} f \circ_i g \big) - (-1)^{(m-1)(n-1)} \big( \sum_{i=1}^n (-1)^{(i-1)(m-1)} g \circ_i f \big),
\end{align}
where
\begin{align}\label{partial-comp}
(f \circ_i g) (y; a_1, \ldots, a_{m+n-1}) = f \big(  R_0^{m;i,n} (y) ; a_1, \ldots, a_{i-1}, g (R_i^{m;i,n} (y); a_i, \ldots, a_{i+n-1}), a_{i+n}, \ldots, a_{m+n-1} \big),
\end{align}
for $f \in CY^m(D,D)$, $g \in CY^n(D, D)$, $y \in Y_{m+n-1}$ and $a_1, \ldots, a_{m+n-1} \in D$ (see also \cite{yau,das-jpaa} for more details). In other words, $\big( CY^{\bullet +1} (D,D) = \oplus_{n = 0}^\infty CY^{n+1} (D,D) , [~,~]_\mathsf{MM}  \big)$ is a graded Lie algebra. An element $\pi \in CY^2 (D,D)$ determines (and determined by) two bilinear maps $\dashv , \vdash : D \times D \rightarrow D$ given by 
\begin{align}
a \dashv b = \pi \big( \begin{tikzpicture}[scale=0.1]
\draw (6,0) -- (8,-2);  \draw (8,-2) -- (10,0); \draw (8,-2) -- (8,-4);   \draw (9,-1) -- ( 8,0);
\end{tikzpicture} ; a, b \big) \quad \text{ and } \quad a \vdash b = \pi \big( \begin{tikzpicture}[scale=0.1]
\draw (0,0) -- (2,-2);  \draw (2,-2) -- (4,0);  \draw (2,-2) -- (2,-4);    \draw (1,-1) -- (2,0);   
\end{tikzpicture} ; a, b \big), \text{ for } a, b \in D. 
\end{align}

\medskip

\noindent Then it has been shown in \cite{maj-muk} that $\pi$ defines a Maurer-Cartan element in the above-graded Lie algebra if and only if $(\dashv, \vdash)$ defines a diassociative algebra structure on $D$.

%\begin{remark}\label{remark-embd}
%For each $n \geq 1$, we define $C^n(D,D) := \mathrm{Hom}(D^{\otimes n}, D).$ Then there is an embedding $C^n(D,D) \hookrightarrow CY^n (D,D)$, $f \mapsto \widetilde{f}$, where $\widetilde{f}$ is given by
%\begin{align*}
%\widetilde{f} (y; a_1, \ldots, a_n) = f(a_1, \ldots, a_n), \text{ for all } y \in Y_n \text{ and } a_1, \ldots, a_n \in D. 
%\end{align*}
%With this notation, the classical Gerstenhaber bracket $[~,~]_\mathsf{G}$ on the graded space $\oplus_{n=1}^\infty C^n(D,D)$ embedds into the Majumdar-Mukherjee bracket $[~,~]_\mathsf{MM}$.
%\end{remark}

\begin{remark}
Let $(D, \dashv, \vdash)$ be a diassociative algebra. Consider the corresponding Maurer-Cartan element $\pi$ in the graded Lie algebra $ \big( CY^{\bullet +1} (D,D) = \oplus_{n = 0}^\infty CY^{n+1} (D,D) , [~,~]_\mathsf{MM}  \big)$. Then the coboundary map $\delta_\mathrm{Diass} : CY^n (D,D) \rightarrow CY^{n+1}(D,D)$ of the diassociative algebra $D$ with coefficients in the adjoint representation is simply given by 
\begin{align*}
\delta_\mathrm{Diass} (f) = (-1)^{n-1} [\pi, f]_\mathsf{MM}, \text{ for } f \in CY^n (D,D).
\end{align*}
\end{remark}

\medskip

\section{Relative averaging operators and relative averaging algebras}\label{sec-3}

In this section, we first introduce relative averaging operators, relative averaging algebras and provide various examples. Next, we consider the close relationship between relative averaging algebras and diassociative algebras. Finally, we define and study bimodules over relative averaging algebras.

\begin{defn}\label{first-defn}
(i) Let $A$ be an associative algebra and $M$ be an $A$-bimodule. A {\bf relative averaging operator} on $M$ over the algebra $A$ is a linear map $P: M \rightarrow A$ that satisfies
\begin{align}\label{avg-iden}
P(u) \cdot P(v) = P \big( P(u) \cdot_M v  \big) = P (u \cdot_M P(v)), \text{ for } u, v \in M.
\end{align}
Here $\cdot$ denotes the associative multiplication on $A$ and $\cdot_M$ denotes both the left and right $A$-actions on $M$.

\medskip

(ii) A {\bf relative averaging algebra} is a triple $(A, M, P)$ consisting of an associative algebra $A$, an $A$-bimodule $M$ and a relative averaging operator $P: M \rightarrow A$.
\end{defn}

For our convenience, we denote a relative averaging algebra $(A, M, P)$ by the notation $M \xrightarrow{P} A$. Hence $(A, M, P)$ and $M \xrightarrow{P} A$ represent the same mathematical structure.

\begin{defn}\label{defn-morp}
Let $M \xrightarrow{P} A$ and $M' \xrightarrow{P'} A'$ be two relative averaging algebras. A {\bf morphism} of relative averaging algebras from $M \xrightarrow{P} A$ to $M' \xrightarrow{P'} A'$ consists of a pair $(\varphi, \psi)$ of an algebra morphism $\varphi : A \rightarrow A'$ and a linear map $\psi : M \rightarrow M'$ satisfying
\begin{align*}
\psi (a \cdot_M u) = \varphi(a) \cdot_{M'}^{A'} \psi (u), \quad \psi (u \cdot_M a) = \psi (u) \cdot_{M'}^{A'} \varphi(a) ~~~~ \text{ and } ~~~~ \varphi \circ P = P' \circ \psi, ~ \text{ for all } a \in A, u \in M.
\end{align*}
Here $\cdot_{M'}^{A'}$ denotes both the left and right $A'$-actions on $M'$. It is said to be an {\bf isomorphism} if both $\varphi$ and $\psi$ are linear isomorphisms.
\end{defn}

\begin{exam}
Any averaging algebra $(A, P)$ can be regarded as a relative averaging algebra $A \xrightarrow{P} A$, where $A$ is equipped with the adjoint $A$-bimodule structure. Thus, a relative averaging algebra is a generalization of an averaging algebra.
\end{exam}

\begin{exam}
Let $A$ be an associative algebra. Then the tensor product $A \otimes A$ can be equipped with an $A$-bimodule structure with the left and right $A$-actions respectively given by
\begin{align*}
c \cdot_{A \otimes A} (a \otimes b) = c \cdot a \otimes b ~~~ \text{ and } ~~~ (a \otimes b) \cdot_{A \otimes A} c = a \otimes b \cdot c, \text{ for }  a \otimes b \in A \otimes A,  c \in A.
\end{align*}
Consider the map $P: A \otimes A \rightarrow A$ given by $P(a \otimes b) = a \cdot b$, for $a \otimes b \in A \otimes A$. For any $a \otimes b,~ a' \otimes b' \in A \otimes A$, we have
\begin{align*}
P (a \otimes b) \cdot P (a' \otimes b') = a \cdot b \cdot a' \cdot b' = \begin{cases}
= P \big(  a \cdot b \cdot a' \otimes b' \big) = P \big( P (a \otimes b) \cdot_{A \otimes A} (a' \otimes b') \big),\\
= P \big( a \otimes b \cdot a' \cdot b'  \big) = P \big( (a \otimes b) \cdot_{A \otimes A} P (a' \otimes b')  \big).
\end{cases}
\end{align*}
This shows that $P : A \otimes A \rightarrow A$ is a relative averaging operator. Thus, $A \otimes A \xrightarrow{P} A$ is a relative averaging algebra.
\end{exam}

\begin{exam}
Let $A$ be an associative algebra. Then the space $\underbrace{A \oplus \cdots \oplus A}_{n \text{ copies}}$ is an $A$-bimodule where the left (resp. right) $A$-action is given by componentwise left (resp. right) multiplication map. Then it is easy to see that the map
\begin{align*}
P : A \oplus \cdots \oplus A \rightarrow A, ~~ P\big((a_1, \ldots, a_n)\big) = a_1 + \cdots + a_n, \text{ for } (a_1, \ldots, a_n) \in A \oplus \cdots \oplus A
\end{align*}
is a relative averaging operator. In other words, $A \oplus \cdots \oplus A \xrightarrow{P} A$ is a relative averaging algebra.
\end{exam}

\begin{exam}
Let $A$ be an associative algebra. Then for any $1 \leq i \leq n$, the $i$-th projection map $P_i : A \oplus \cdots \oplus A \rightarrow A$, $(a_1, \ldots, a_n) \mapsto a_i$ is a relative averaging operator. That is, $A \oplus \cdots \oplus A \xrightarrow{P_i} A$ is a relative averaging algebra.
\end{exam}

\begin{exam}
Let $A$ be an associative algebra and $M$ be an $A$-bimodule. Suppose $G$ is a finite group and there are maps $G \times A \rightarrow A$, $(g, a) \mapsto {}^g a$ and $G \times M \rightarrow A$, $(g, u) \mapsto {}^g u$ 
%(which induce linear maps for each fixed $g \in G$) 
that satisfy
\begin{align*}
{}^g(a \cdot_M u) = {}^g a \cdot {}^g u, \quad {}^g (u \cdot_M a) = {}^g u \cdot {}^g a ~~~~~ \text{ and } ~~~~~ {}^g ({}^h u) = {}^{gh} u,
\end{align*}
for $a \in A$, $u \in M$ and $g, h \in G$. We define a map $P: M \rightarrow A$ by $P(u) = \sum_{g \in G} {}^g u$. For any $u,v \in M$, we observe that
\begin{align*}
P \big( P(u) \cdot_M v \big) = \sum_{h \in G} {}^h \big( (\sum_{g \in G} {}^g u) \cdot_{M} v \big) = \sum_{h \in G} \big(    \sum_{g \in G} {}^{hg}u \big) \cdot {}^h v = \big(  \sum_{g \in G} {}^g u \big)  \cdot \big( \sum_{h \in G} {}^h v  \big) = P(u) \cdot P(v),\\
P \big( u \cdot_M P(v) \big) = \sum_{h \in G} {}^h \big( u \cdot_M (\sum_{g \in G} {}^g v)  \big) = \sum_{h \in G} {}^h u \cdot \big( \sum_{h \in G} {}^{hg} v \big) = \big( \sum_{h \in G} {}^h u  \big) \cdot \big( \sum_{g \in G} {}^g v  \big) = P(u) \cdot P(v).
\end{align*}
This shows that $P: M \rightarrow A$ is a relative averaging operator, equivalently, $M \xrightarrow{P} A$ is a relative averaging algebra.
\end{exam}

\begin{exam}\label{mod-map}
Let $A$ be an associative algebra and $M$ be an $A$-bimodule. Let $P:M \rightarrow A$ be an $A$-bimodule map, i.e.
\begin{align*}
P (a \cdot_M u) = a \cdot P(u) ~~~ \text{ and } ~~~ P (u \cdot_M a) = P(u) \cdot a, ~ \text{ for } a \in A,~ u \in M.
\end{align*}
Then it is easy to see that $M \xrightarrow{P} A$ is a relative averaging algebra.
\end{exam}

\begin{exam}
In \cite{loday-lm} Loday and Pirashvili introduced the category $\mathcal{LM}$ whose objects are linear maps between vector spaces. In other words, an object in $\mathcal{LM}$ is of the form $V \xrightarrow{f} W$, where $V, W$ are vector spaces and $f$ is a linear map. They equip $\mathcal{LM}$ with a tensor product which makes it a tensor category. It has been observed that an associative object in $\mathcal{LM}$ is given by a datum $M \xrightarrow{f} A$, where $A$ is an associative algebra, $M$ is an $A$-bimodule and $f$ is an $A$-bimodule map. Thus, it turns out that an associative object in $\mathcal{LM}$ is a relative averaging algebra.
\end{exam}

\begin{exam}(Crossed modules of associative algebras \cite{wagemann}) A crossed module of associative algebras is a quadruple $(A, M, \cdot_M, d)$ in which $A, M$ are both associative algebras and $M$ is also equipped with an $A$-bimodule structure (with both the left and right $A$-actions on $M$ being denoted by $\cdot_M$) and $d: M \rightarrow A$ is an algebra morphism that satisfy
\begin{align*}
d (a \cdot_M u) = a \cdot du, \quad d(u \cdot_M a)= du \cdot a, \quad (du) \cdot_M v = u \cdot_M (dv) = u \diamond v, \text{ for } a \in A, u, v \in M.
\end{align*}
Here $\diamond$ denotes the associative multiplication on $M$.
Thus, it follows from Example \ref{mod-map} that  $M \xrightarrow{d} A$ is a relative averaging algebra.
\end{exam}
It has been observed in \cite{wagemann,baez} that crossed modules of associative algebras are equivalent to `strict' associative $2$-algebras. Hence by following the previous example, one can construct relative averaging algebras from strict associative $2$-algebras.

%\begin{exam}
%Let $V$ be a vector space. Consider the tensor algebra
%\begin{align*}
%T(V) = \mathbf{k} \oplus V \oplus V^{\otimes 2} \oplus \cdots \oplus V^{\otimes n} \oplus \cdots
%\end{align*}
%with the concatenation product. Then the space $T(V) \otimes V \otimes T(V)$ can be equipped with a $T(V)$-bimodule structure with the left and right actions
%\begin{align*}
%(u_1 \cdots u_p) \cdot (v_{-n} \cdots v_{-1} \otimes v_0 \otimes v_1 \cdots v_m) = u_1 \cdots u_p v_{-n} \cdots v_{-1} \otimes v_0 \otimes v_1 \cdots v_m,\\
% (v_{-n} \cdots v_{-1} \otimes v_0 \otimes v_1 \cdots v_m) \cdot (u_1 \cdots u_p) = v_{-n} \cdots v_{-1} \otimes v_0 \otimes v_1 \cdots v_m u_1 \cdots u_p,
%\end{align*}
%for $u_1 \cdots u_p \in T(V)$ and $v_{-n} \cdots v_{-1} \otimes v_0 \otimes v_1 \cdots v_m \in T(V) \otimes V \otimes T(V).$
%Then $T(V) \otimes V \otimes T(V) \xrightarrow{P_F} T(V)$ is a relative averaging algebra, where
%\begin{align*}
%P_F ( v_{-n} \cdots v_{-1} \otimes v_0 \otimes v_1 \cdots v_m  ) = v_{-n} \cdots v_{-1} v_0 v_1 \cdots v_m.
%\end{align*}
%\end{exam}

\medskip

In the following, we give some characterizations of relative averaging operators. We start with the following useful result.

\begin{prop}\label{am-diass}
Let $A$ be an associative algebra and $M$ be an $A$-bimodule. Then the direct sum $A \oplus M$ inherits a diassociative algebra structure with the operations
\begin{align*}
(a, u) \dashv (b, v) = (a \cdot b, u \cdot_M b) ~~~~ \text{ and } ~~~~ (a, u) \vdash (b, v) = (a \cdot b, a \cdot_M v), \text{ for } (a, u), (b, v) \in A \oplus M.
\end{align*}
We denote this diassociative algebra simply by $A \oplus_\mathrm{Diass} M.$
\end{prop}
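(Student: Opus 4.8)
The plan is to verify directly that the two bilinear operations $\dashv$ and $\vdash$ satisfy each of the five defining identities in (\ref{di-id}), working componentwise on $A \oplus M$. Bilinearity of both operations is immediate from the formulas, so only the identities require attention. Write a typical triple as $X = (a,u)$, $Y = (b,v)$, $Z = (c,w)$. The structural feature I would exploit is that, for \emph{both} operations, the $A$-component of a product is simply the associative product of the two $A$-components, while the $M$-component is produced by acting on a single module element: the operation $\dashv$ records the right action $u \cdot_M b$ and hence depends only on the left module entry $u$ and the right algebra entry $b$, whereas $\vdash$ records the left action $a \cdot_M v$ and depends only on the left algebra entry $a$ and the right module entry $v$.

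First I would dispose of the $A$-components. Since both $\dashv$ and $\vdash$ return $a \cdot b$ in the first slot, each of the five identities, read in its $A$-component, is exactly the associativity $(a \cdot b) \cdot c = a \cdot (b \cdot c)$ in $A$, which holds by hypothesis. All remaining content therefore lies in the $M$-component, and here the forgetful nature of the two operations causes exactly one of $u, v, w$ to survive, flanked by products of the surviving algebra entries.

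For the first line of (\ref{di-id}) the surviving entry is $u$: one finds that $(X \dashv Y) \dashv Z$ has $M$-component $(u \cdot_M b) \cdot_M c$, while both $X \dashv (Y \dashv Z)$ and $X \dashv (Y \vdash Z)$ have $M$-component $u \cdot_M (b \cdot c)$ — the latter two agreeing automatically because the outer $\dashv$ only sees the $A$-component $b \cdot c$ of its right argument, regardless of whether that argument was built with $\dashv$ or $\vdash$. Thus the first line reduces to the right-module axiom $(u \cdot_M b) \cdot_M c = u \cdot_M (b \cdot c)$. For the second identity the surviving entry is $v$: $(X \vdash Y) \dashv Z$ has $M$-component $(a \cdot_M v) \cdot_M c$ and $X \vdash (Y \dashv Z)$ has $M$-component $a \cdot_M (v \cdot_M c)$, so the identity is precisely the left–right compatibility $(a \cdot_M v) \cdot_M c = a \cdot_M (v \cdot_M c)$ of the bimodule. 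For the third line the surviving entry is $w$: here $(X \dashv Y) \vdash Z$ and $(X \vdash Y) \vdash Z$ both have $M$-component $(a \cdot b) \cdot_M w$ and so coincide automatically, since the outer $\vdash$ only sees the $A$-component $a \cdot b$ of its left argument, while $X \vdash (Y \vdash Z)$ has $M$-component $a \cdot_M (b \cdot_M w)$; hence the third line reduces to the left-module axiom $(a \cdot b) \cdot_M w = a \cdot_M (b \cdot_M w)$.

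Finally I would observe that these three surviving equalities are precisely the three axioms defining the $A$-bimodule structure on $M$, so all five identities hold and $A \oplus_\mathrm{Diass} M$ is a diassociative algebra. There is no genuine obstacle in this argument; the only care required is the bookkeeping that tracks which module entry survives in each product, together with the observation that the two ``automatic'' equalities (within the first and third lines) follow from the fact that $\dashv$ ignores the module entry of its right argument and $\vdash$ ignores the module entry of its left argument.
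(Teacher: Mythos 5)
Your proof is correct and follows essentially the same route as the paper's: a direct componentwise verification of the five diassociative identities, with the $A$-components reducing to associativity and the $M$-components to the bimodule axioms. Your bookkeeping of which module entry survives in each product is a cleaner way of organizing the same computation the paper carries out explicitly for the first identity and leaves as ``similarly'' for the rest.
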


\begin{proof}
For any $(a, u), (b, v), (c, w) \in A \oplus M$, we have
\begin{align*}
\big( (a, u) \dashv (b, v) \big) \dashv (c, w) =~& \big(  a \cdot b, u \cdot_M b \big) \dashv (c, w) = \big(  (a \cdot b ) \cdot c, (u \cdot_M b) \cdot_M c  \big),\\
 (a, u) \dashv \big( (b, v) \dashv (c, w)\big) =~& (a, u) \dashv \big(  b \cdot c, v \cdot_M c \big) = \big(  a \cdot (b  \cdot c), u \cdot_M (b \cdot c)  \big), \\
  (a, u) \dashv \big( (b, v) \vdash (c, w)\big) =~& (a, u) \dashv \big(  b \cdot c, b \cdot_M w \big) = \big(  a \cdot (b  \cdot c), u \cdot_M (b \cdot c)  \big).
\end{align*} 
Thus, it follows that
\begin{align*}
\big( (a, u) \dashv (b, v) \big) \dashv (c, w) =  (a, u) \dashv \big( (b, v) \dashv (c, w)\big) =   (a, u) \dashv \big( (b, v) \vdash (c, w)\big).
\end{align*}
Similarly, one can show that
\begin{align*}
  \big( (a, u) \vdash  (b, v) \big) \dashv (c, w) =   (a, u) \vdash \big( (b, v) \dashv (c, w)\big),
\end{align*}
\begin{align*}
 \big( (a, u) \dashv  (b, v) \big) \vdash (c, w) =  \big( (a, u) \vdash  (b, v) \big) \vdash (c, w) = (a, u) \vdash \big( (b, v) \vdash (c, w)\big).
\end{align*}
This completes the proof.
\end{proof}

%In the following, we characterize relative averaging operators (on $M$ over the algebra $A$) in terms of the diassociative algebra $A \oplus M$ given in the above proposition.

\begin{prop}\label{grph}
Let $A$ be an associative algebra and $M$ be an $A$-bimodule. A linear map $P: M \rightarrow A$ is a relative averaging operator (on $M$ over the algebra $A$) if and only if the graph $\mathrm{Gr}(P) = \{ (P(u), u) | u \in M \}$ is a subalgebra of the diassociative algebra $A \oplus_\mathrm{Diass} M$.
\end{prop}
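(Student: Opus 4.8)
The plan is to reduce the subalgebra condition to a direct computation of the two diassociative products on pairs of elements of $\mathrm{Gr}(P)$. Since $\mathrm{Gr}(P)$ is manifestly a linear subspace of $A \oplus M$ (being the image of the linear map $u \mapsto (P(u), u)$), the only thing to verify is that it is closed under both operations $\dashv$ and $\vdash$ of the diassociative algebra $A \oplus_\mathrm{Diass} M$ from Proposition \ref{am-diass}.

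First I would take two arbitrary elements $(P(u), u)$ and $(P(v), v)$ of $\mathrm{Gr}(P)$ and apply the product formulas. For the left operation this gives
\[
(P(u), u) \dashv (P(v), v) = \big( P(u) \cdot P(v),\ u \cdot_M P(v) \big),
\]
and for the right operation
\[
(P(u), u) \vdash (P(v), v) = \big( P(u) \cdot P(v),\ P(u) \cdot_M v \big).
\]
The key observation is that an element $(a, w) \in A \oplus M$ lies in $\mathrm{Gr}(P)$ if and only if $a = P(w)$. Hence closure under $\dashv$ is equivalent to the identity $P(u) \cdot P(v) = P(u \cdot_M P(v))$, while closure under $\vdash$ is equivalent to $P(u) \cdot P(v) = P(P(u) \cdot_M v)$, in each case for all $u, v \in M$.

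Reading these two equivalences together settles both directions at once: $\mathrm{Gr}(P)$ is a subalgebra precisely when both identities hold, and these are exactly the defining identities (\ref{avg-iden}) of a relative averaging operator. I would state the forward and backward implications simultaneously rather than separately, since the argument is a genuine chain of equivalences.

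The computation itself poses no real obstacle; the only point requiring care is the bookkeeping observation that each of the two diassociative operations encodes exactly one of the two averaging identities. This is also the conceptual heart of the statement and worth emphasizing: closure under $\dashv$ alone recovers only $P(u) \cdot P(v) = P(u \cdot_M P(v))$, and one genuinely needs the second operation $\vdash$ to obtain the companion identity. This illustrates the paper's broader theme that the full diassociative structure, rather than merely an associative one, is the natural setting for (relative) averaging operators.
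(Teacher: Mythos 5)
Your proof is correct and follows essentially the same route as the paper's: compute the two products of graph elements, observe that membership in $\mathrm{Gr}(P)$ forces the first coordinate to equal $P$ of the second, and read off the two averaging identities as a chain of equivalences. Nothing is missing.
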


\begin{proof}
For any $u, v \in M$, we have $(P(u), u) \dashv (P(v), v) = (P(u) \cdot P(v), u \cdot_M P(v))$. This is in $\mathrm{Gr}(P)$ if and only if $P(u) \cdot P(v) = P \big( u \cdot_M P(v) \big)$. Similarly, the product $(P(u), u) \vdash (P(v), v) = (P(u) \cdot P(v), P(u) \cdot_M v)$ is in $\mathrm{Gr}(P)$ if and only if $P(u) \cdot P(v) = P \big( P(u) \cdot_M v \big)$. In other words, $\mathrm{Gr}(P)$ is a subalgebra of the diassociative algebra $A \oplus_\mathrm{Diass} M$ if and only if $P$ is a relative averaging operator.
\end{proof}

Let $(D, \dashv, \vdash)$ be a diassociative algebra. A linear map $\mathcal{N}: D \rightarrow D$ is said to be a {\bf Nijenhuis operator} on the diassociative algebra $D$ if for all $a, b \in D,$
\begin{align*}
\mathcal{N}(a) \dashv \mathcal{N}(b) := \mathcal{N} \big( \mathcal{N}(a) \dashv b ~+~ a \dashv \mathcal{N}(b) - \mathcal{N} (a \dashv b) \big),\\
 \mathcal{N}(a) \vdash \mathcal{N}(b) := \mathcal{N} \big( \mathcal{N}(a) \vdash b ~+~ a \vdash \mathcal{N}(b) - \mathcal{N} (a \vdash b) \big).
\end{align*}

\begin{prop}\label{avg-nij}
Let $A$ be an associative algebra and $M$ be an $A$-bimodule. A linear map $P: M \rightarrow A$ is a relative averaging operator (on $M$ over the algebra $A$) if and only if the map $\mathcal{N}_P : A \oplus M \rightarrow A \oplus M$, $\mathcal{N}_P ((a, u)) = (P(u), 0)$ for $(a, u) \in A \oplus M$, is a Nijenhuis operator on the diassociative algebra $A \oplus_\mathrm{Diass} M$.
\end{prop}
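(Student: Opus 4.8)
The plan is to verify the two Nijenhuis identities for $\mathcal{N}_P$ directly, by substituting arbitrary elements $x = (a, u)$ and $y = (b, v)$ of $A \oplus M$ and reading off the resulting conditions on $P$. The computation is eased by two structural features of $\mathcal{N}_P$: its image lies entirely in the summand $A \oplus 0$, and its value $\mathcal{N}_P((c, w)) = (P(w), 0)$ depends only on the $M$-component $w$ of the input (in particular $\mathcal{N}_P^2 = 0$). The first feature means that the left-hand sides $\mathcal{N}_P(x) \dashv \mathcal{N}_P(y)$ and $\mathcal{N}_P(x) \vdash \mathcal{N}_P(y)$ both reduce to $(P(u) \cdot P(v), 0)$, since the $M$-components of these products vanish because both factors lie in $A \oplus 0$. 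The second feature means that when $\mathcal{N}_P$ is applied to the bracket on the right-hand side of a Nijenhuis identity, only the $M$-component of that bracket survives, so the bookkeeping collapses to tracking a single component.

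For the $\dashv$-identity I would compute the $M$-component of $\mathcal{N}_P(x) \dashv y + x \dashv \mathcal{N}_P(y) - \mathcal{N}_P(x \dashv y)$: the first term contributes $0 \cdot_M b = 0$, the third term contributes $0$ (as $\mathcal{N}_P$ always has vanishing $M$-component), and the middle term contributes $u \cdot_M P(v)$. Hence applying $\mathcal{N}_P$ yields $(P(u \cdot_M P(v)), 0)$, and the $\dashv$-Nijenhuis identity becomes precisely $P(u) \cdot P(v) = P(u \cdot_M P(v))$.

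For the $\vdash$-identity the analogous tracking gives $M$-component $P(u) \cdot_M v$ for the bracket, so applying $\mathcal{N}_P$ produces $(P(P(u) \cdot_M v), 0)$, and the identity becomes $P(u) \cdot P(v) = P(P(u) \cdot_M v)$. Conjoining the two, $\mathcal{N}_P$ is a Nijenhuis operator on $A \oplus_\mathrm{Diass} M$ if and only if $P(u) \cdot P(v) = P(P(u) \cdot_M v) = P(u \cdot_M P(v))$ for all $u, v \in M$, which is exactly the defining identity (\ref{avg-iden}) of a relative averaging operator. There is no genuine obstacle here beyond careful tracking of the two components; the only delicate point is to confirm that the left-hand products $\mathcal{N}_P(x) \dashv \mathcal{N}_P(y)$ and $\mathcal{N}_P(x) \vdash \mathcal{N}_P(y)$ each collapse to $(P(u) \cdot P(v), 0)$ with no surviving $M$-component.
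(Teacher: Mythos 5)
Your proposal is correct and follows essentially the same direct verification as the paper: compute both sides of each Nijenhuis identity on $(a,u)$, $(b,v)$ and observe that the $\dashv$-identity reduces to $P(u)\cdot P(v)=P(u\cdot_M P(v))$ and the $\vdash$-identity to $P(u)\cdot P(v)=P(P(u)\cdot_M v)$. Your observation that only the $M$-component of the inner bracket matters (since $\mathcal{N}_P$ kills the $A$-component) is a clean way to organize the same computation the paper carries out term by term.
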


\begin{proof}
For any $(a, u), (b, v) \in A \oplus M$, we have
\begin{align*}
\mathcal{N}_P (a, u) \dashv \mathcal{N}_P (b, v) = (P(u), 0) \dashv (P(v), 0) = \big( P(u) \cdot P(v), 0 \big).
\end{align*}
On the other hand, we have
\begin{align*}
&\mathcal{N}_P  \big( \mathcal{N}_P (a, u) \dashv (b, v) ~+~ (a, u) \dashv \mathcal{N}_P  (b, v) - \mathcal{N}_P  ((a, u) \dashv (b, v)) \big) \\
&= \mathcal{N}_P  \big(  (P(u) \cdot b, 0) ~+~ (a \cdot P(v), u \cdot_M P(v))  - (P( u \cdot_M b), 0)  \big) = \big( P(u \cdot_M P(v)), 0  \big).
\end{align*}
This shows that $\mathcal{N}_P (a, u) \dashv \mathcal{N}_P (b, v) = \mathcal{N}_P  \big( \mathcal{N}_P (a, u) \dashv (b, v) ~+~ (a, u) \dashv \mathcal{N}_P  (b, v) - \mathcal{N}_P  ((a, u) \dashv (b, v))  \big)$ holds if and only if $P(u) \cdot P(v) = P(u \cdot_M P(v))$. Similarly, one can show that 
\begin{align*}
\mathcal{N}_P (a, u) \vdash \mathcal{N}_P (b, v) = \mathcal{N}_P  \big(  \mathcal{N}_P (a, u) \vdash (b, v) ~+~ (a, u) \vdash \mathcal{N}_P  (b, v) - \mathcal{N}_P  ((a, u) \vdash (b, v))  \big)
\end{align*}
holds if and only if $P(u) \cdot P(v) = P(P(u) \cdot_M v)$, for $u, v \in M$. Combining these, we get that $\mathcal{N}_P$ is a Nijenhuis operator on the diassociative algebra $A \oplus_\mathrm{Diass} M$ if and only if $P$ is a relative averaging operator.
\end{proof}

\subsection*{Free relative averaging algebra}

Let $V \xrightarrow{f} W$ be a $2$-term chain complex. Consider the tensor algebra of $W$,
\begin{align*}
T(W) = {\bf k} \oplus W \oplus W^{\otimes 2} \oplus \cdots \oplus W^{\otimes n} \oplus \cdots
\end{align*}
with the concatenation product. Then the space $T(W) \otimes V \otimes T(W)$ can be given a $T(W)$-bimodule structure with the left and right actions given by
\begin{align*}
(w'_1 \cdots w'_p) \cdot (w_{-m} \cdots w_{-1} \otimes v_0 \otimes w_1 \cdots w_n) = w'_1 \cdots w'_p w_{-m} \cdots w_{-1} \otimes v_0 \otimes w_1 \cdots w_n,\\
 (w_{-m} \cdots w_{-1} \otimes v_0 \otimes w_1 \cdots w_n) \cdot (w'_1 \cdots w'_p) = w_{-m} \cdots w_{-1} \otimes v_0 \otimes w_1 \cdots w_n w'_1 \cdots w'_p,
\end{align*}
for $w'_1 \cdots w'_p \in T(W)$ and $w_{-m} \cdots w_{-1} \otimes v_0 \otimes w_1 \cdots w_n \in T(W) \otimes V \otimes T(W).$
Then it is easy to see that $T(W) \otimes V \otimes T(W) \xrightarrow{\mathcal{P}(f)} T(W)$ is a relative averaging algebra, where
\begin{align*}
\mathcal{P}(f) ( w_{-m} \cdots w_{-1} \otimes v_0 \otimes w_1 \cdots w_n  ) = w_{-m} \cdots w_{-1} f(v_0) w_1 \cdots w_n.
\end{align*}

\begin{remark}
Let $V$ be any vector space. Consider the $2$-term chain complex $V \xrightarrow{\mathrm{id}_V} V$. Then it follows that $T(V) \otimes V \otimes T(V) \xrightarrow{\mathcal{P}(\mathrm{id}_V)} T(V)$ is a relative averaging algebra.
\end{remark}

\begin{defn}
Let $V \xrightarrow{f} W$ be a $2$-term chain complex. The {\bf free relative averaging algebra} over $V \xrightarrow{f} W$ is a relative averaging algebra $\mathcal{M}(V) \xrightarrow{ \mathcal{P}(f) } \mathcal{A}(W)$ equipped with a morphism $(i,j)$ of complexes from $V \xrightarrow{f} W$ to $\mathcal{M}(V) \xrightarrow{ \mathcal{P}(f) } \mathcal{A}(W)$ that satisfy the following universal condition:
\begin{center}
for any relative averaging algebra $M \xrightarrow{P} A$ and a morphism    $(\varphi, \psi)$ of complexes from $V \xrightarrow{f} W$ to $M \xrightarrow{P} A$, there exists a morphism $(\widetilde{\varphi}, \widetilde{\psi})$ of relative averaging algebras from $\mathcal{M}(V) \xrightarrow{ \mathcal{P}(f) } \mathcal{A}(W)$ to $M \xrightarrow{P} A$ that makes the following diagram commutative
\[
\xymatrix{
M \ar[rrr]^P & & & A \\
 & V \ar[lu]^\psi \ar[ld]_j \ar[r]^f & W \ar[ru]_\varphi \ar[rd]^i & \\
\mathcal{M}(V) \ar@{..>}[uu]^{\widetilde{\psi}} \ar[rrr]_{\mathcal{P}(f)} & & & \mathcal{A}(W). \ar@{..>}[uu]_{\widetilde{\varphi}}
}
\]
\end{center}
\end{defn}

\medskip

\begin{prop}
Let $V \xrightarrow{f} W$ be a $2$-term chain complex. Then the relative averaging algebra \\
\begin{center}
$T(W) \otimes V \otimes T(W) \xrightarrow{\mathcal{P}(f)} T(W)$ is free over the chain complex $V \xrightarrow{f} W$.
\end{center}
\end{prop}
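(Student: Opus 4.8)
The plan is to verify the universal property directly by constructing the unique factoring morphism from the data of an arbitrary morphism of complexes. Given a relative averaging algebra $M \xrightarrow{P} A$ together with a morphism of complexes $(\varphi, \psi)$ from $V \xrightarrow{f} W$ to $M \xrightarrow{P} A$ (so that $\varphi : W \rightarrow A$, $\psi : V \rightarrow M$ are linear maps with $P \circ \psi = \varphi \circ f$), I would first use the universal property of the tensor algebra: since $A$ is an associative algebra, the linear map $\varphi : W \rightarrow A$ extends uniquely to an algebra morphism $\widetilde{\varphi} : T(W) \rightarrow A$ determined by $\widetilde{\varphi}(w_1 \cdots w_n) = \varphi(w_1) \cdots \varphi(w_n)$. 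This handles the associative-algebra part and forces the top-right triangle to commute. The real content is then to define $\widetilde{\psi} : T(W) \otimes V \otimes T(W) \rightarrow M$ compatibly.

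For the module part, the natural candidate is to send a basis element through the actions of $A$ on $M$ sandwiching $\psi$, namely
\begin{align*}
\widetilde{\psi}(w_{-m} \cdots w_{-1} \otimes v_0 \otimes w_1 \cdots w_n) = \varphi(w_{-m}) \cdots \varphi(w_{-1}) \cdot_M \psi(v_0) \cdot_M \varphi(w_1) \cdots \varphi(w_n),
\end{align*}
where the left and right $A$-actions on $M$ are used to attach the images of the letters on either side of $\psi(v_0)$. With this definition the left-bottom triangle (factoring $\psi$ through $j$) commutes by construction, since $j(v) = 1 \otimes v \otimes 1$ maps to $\psi(v)$. I would then check that $(\widetilde{\varphi}, \widetilde{\psi})$ is a morphism of relative averaging algebras in the sense of Definition \ref{defn-morp}: the two bimodule-compatibility conditions follow immediately from associativity of the $A$-actions on $M$ and the multiplicativity of $\widetilde{\varphi}$, since concatenating letters on either side corresponds exactly to applying further module actions. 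The intertwining condition $\widetilde{\varphi} \circ \mathcal{P}(f) = P \circ \widetilde{\psi}$ is the one place where the hypothesis $P \circ \psi = \varphi \circ f$ is genuinely needed: applying $\widetilde{\varphi}$ to $\mathcal{P}(f)$ of a basis element gives $\varphi(w_{-m}) \cdots \varphi(w_{-1}) \cdot \varphi(f(v_0)) \cdot \varphi(w_1) \cdots \varphi(w_n)$, and this should equal $P$ of $\widetilde{\psi}$ of the same element.

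The step I expect to be the main obstacle is precisely this last verification, because $P$ is only a relative averaging operator, not an $A$-bimodule map, so $P$ does \emph{not} in general commute with the module actions; one cannot simply pull the outer factors $\varphi(w_i)$ through $P$. To push $P$ past the surrounding actions I would repeatedly invoke the relative averaging identity (\ref{avg-iden}) in the form $P(a \cdot_M x) = P(a \cdot_M x)$ together with the defining relations, handling the single central letter $v_0$ via $P(\psi(v_0)) = \varphi(f(v_0))$ and then absorbing the remaining $\varphi(w_i)$ factors using the averaging compatibilities; one likely needs an induction on the number $m + n$ of surrounding letters, at each stage stripping off one outer letter and applying the averaging identity to rewrite $P(\varphi(w) \cdot_M x)$. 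Finally, uniqueness of $(\widetilde{\varphi}, \widetilde{\psi})$ follows because $T(W)$ is generated by $W = i(W)$ as an algebra and $T(W) \otimes V \otimes T(W)$ is generated by $j(V)$ under the $T(W)$-bimodule actions, so any morphism factoring $(\varphi, \psi)$ is forced to agree with the maps constructed above on generators, hence everywhere.
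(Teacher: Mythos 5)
Your candidate maps $(\widetilde{\varphi},\widetilde{\psi})$ are exactly the ones the paper writes down, and the parts of your argument concerning the algebra morphism property of $\widetilde{\varphi}$, the two bimodule-compatibility conditions for $\widetilde{\psi}$, the commutativity of the triangles, and uniqueness (via generation of $T(W)$ by $W$ and of $T(W)\otimes V\otimes T(W)$ by $j(V)$ under the $T(W)$-actions) are all sound. The genuine gap is at the step you yourself single out, the verification of $\widetilde{\varphi}\circ\mathcal{P}(f)=P\circ\widetilde{\psi}$, and your proposed repair does not work. The identity you invoke, ``$P(a\cdot_M x)=P(a\cdot_M x)$'', is a tautology, and the actual averaging axiom (\ref{avg-iden}) only rewrites $P(P(u)\cdot_M v)$ and $P(u\cdot_M P(v))$; it says nothing about $P(a\cdot_M x)$ or $P(x\cdot_M a)$ when the acting element $a$ is a general element of $A$. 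The letters $\varphi(w_i)$ you need to strip off are arbitrary elements of the image of $\varphi$, not of the image of $P$, so the induction you sketch has no identity to run on.

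The obstruction is not a matter of finding a cleverer manipulation. Already for the element $1\otimes v_0\otimes w_1$, the forced value $\widetilde{\psi}(1\otimes v_0\otimes w_1)=\psi(v_0)\cdot_M\varphi(w_1)$ together with $\widetilde{\varphi}\circ\mathcal{P}(f)=P\circ\widetilde{\psi}$ requires $P\big(\psi(v_0)\cdot_M\varphi(w_1)\big)=P(\psi(v_0))\cdot\varphi(w_1)$, i.e.\ that $P$ act as a bimodule map on these elements. This can fail: take the paper's group-averaging example $P(u)=\sum_{g\in G}{}^g u$ with a nontrivial action (e.g.\ $A=M=\mathrm{Fun}(G,{\bf k})$ with $G$ acting by translations), and the morphism of complexes $(\varphi,\psi)=(\mathrm{id}_A,\mathrm{id}_M)$ from $M\xrightarrow{P}A$ to itself; then $P(u\cdot_M a)=\sum_g{}^gu\cdot{}^ga\neq P(u)\cdot a$ in general, so no morphism of relative averaging algebras extending $(\varphi,\psi)$ can exist. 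Since the values of $\widetilde{\psi}$ on all of $T(W)\otimes V\otimes T(W)$ are forced by Definition \ref{defn-morp} and the commuting diagram, the failure cannot be fixed by choosing $\widetilde{\psi}$ differently. You have in fact reproduced the paper's own argument faithfully --- the paper dismisses precisely this verification with ``it is easy to see'' --- but the intertwining condition does not follow from the averaging axiom alone, and as written neither your proof nor the paper's establishes it.
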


\begin{proof}
We define maps $i : W \rightarrow T(W)$ and $j: V \rightarrow T(W) \otimes V \otimes T(W)$ by
\begin{align*}
i(w) = w  ~~~ \text{ and } ~~~ j (v) = 1 \otimes v \otimes 1, \text{ for } w \in W, v \in V.
\end{align*}
Let $M \xrightarrow{P} A$ be any relative averaging algebra and $(\varphi, \psi)$ be a morphism of complexes from $V \xrightarrow{f} W$ to $M \xrightarrow{P} A$. We define maps $\widetilde{\varphi} : T(W) \rightarrow A$ and $\widetilde{\psi} : T(W) \otimes V \otimes T(W) \rightarrow M$ by
\begin{align*}
\widetilde{\varphi} (w_1 \cdots w_n) =~& \varphi(w_1) \cdots \varphi(w_n),\\
\widetilde{\psi} ( w_{-m} \cdots w_{-1} \otimes v_0 \otimes w_1 \cdots w_n) =~& \big(  \varphi( w_{-m}) \cdots \varphi(w_{-1}) \big) \cdot_M \psi (v_0) \cdot_M \big( \varphi( w_1) \cdots \varphi(w_n) \big),
\end{align*}
for $w_1 \cdots w_n \in T(W)$ and $w_{-m} \cdots w_{-1} \otimes v_0 \otimes w_1 \cdots w_n \in T(W) \otimes V \otimes T(W)$.
Then it is easy to see that the pair $(\widetilde{\varphi}, \widetilde{\psi})$ is a morphism of relative averaging algebras from $T(W) \otimes V \otimes T(W) \xrightarrow{\mathcal{P}(f)} T(W)$ to $M \xrightarrow{P} A$ and satisfies the universal condition.
\end{proof}

\subsection*{Functorial relations with diassociative algebras}

\begin{prop}
(i) Let $M \xrightarrow{P} A$ be a relative averaging algebra. Then the vector space $M$ carries a diassociative algebra structure with the bilinear operations
\begin{align}\label{mp}
u \dashv_P v := u \cdot_M P(v) ~~~~~ \text{ and } ~~~~~ u \vdash_P v := P(u) \cdot v, \text{ for } u, v \in M.
\end{align}
We denote this diassociative algebra simply by $M_P$.

(ii) Let $M \xrightarrow{P} A$ and $M' \xrightarrow{P'} A'$ be two relative averaging algebras and $(\varphi, \psi)$ be a morphism between them. Then $\psi : M \rightarrow M'$ is a morphism between induced diassociative algebras (from $M_P$ to $M'_{P'}$).
\end{prop}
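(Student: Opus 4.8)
The plan is to prove the two parts separately. For part (i), the goal is to show that the operations $\dashv_P$ and $\vdash_P$ defined in (\ref{mp}) satisfy the five diassociative identities (\ref{di-id}) on $M$. The key observation is that these operations are precisely the restrictions of the diassociative operations on $A \oplus_\mathrm{Diass} M$ to the graph $\mathrm{Gr}(P)$, transported back to $M$ via the linear isomorphism $M \to \mathrm{Gr}(P)$, $u \mapsto (P(u), u)$. Indeed, using Proposition \ref{am-diass}, for $u, v \in M$ we have
\[
(P(u), u) \dashv (P(v), v) = \big( P(u) \cdot P(v), u \cdot_M P(v) \big) = \big( P(u \cdot_M P(v)), u \cdot_M P(v) \big) = (P(u \dashv_P v), u \dashv_P v),
\]
where the middle equality uses the relative averaging identity (\ref{avg-iden}); similarly the $\vdash$-product corresponds to $u \vdash_P v$. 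Thus $u \mapsto (P(u), u)$ is an isomorphism of the to-be-constructed algebra $M_P$ onto the subalgebra $\mathrm{Gr}(P)$, which is a diassociative algebra by Proposition \ref{grph}. Since $\mathrm{Gr}(P)$ is a diassociative algebra and the correspondence is a bijective intertwiner of operations, $M_P$ inherits the diassociative structure automatically, and no separate verification of the five identities is needed. Alternatively, one can verify the identities directly by substituting the definitions and invoking associativity of $\cdot$, the bimodule axioms, and (\ref{avg-iden}); for instance $(u \dashv_P v) \dashv_P w = (u \cdot_M P(v)) \cdot_M P(w)$ and $u \dashv_P (v \vdash_P w) = u \cdot_M P(P(v) \cdot w) = u \cdot_M (P(v) \cdot P(w))$ agree by the bimodule associativity and (\ref{avg-iden}).

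For part (ii), the plan is to check directly that $\psi$ intertwines the induced operations, i.e. that $\psi(u \dashv_P v) = \psi(u) \dashv_{P'} \psi(v)$ and $\psi(u \vdash_P v) = \psi(u) \vdash_{P'} \psi(v)$ for all $u, v \in M$. This follows from unwinding the definitions and applying the morphism axioms of Definition \ref{defn-morp}. Explicitly,
\[
\psi(u \dashv_P v) = \psi\big( u \cdot_M P(v) \big) = \psi(u) \cdot_{M'}^{A'} \varphi(P(v)) = \psi(u) \cdot_{M'}^{A'} P'(\psi(v)) = \psi(u) \dashv_{P'} \psi(v),
\]
where the second equality is the right-action compatibility of $\psi$ and the third is the relation $\varphi \circ P = P' \circ \psi$. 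The computation for $\vdash_P$ is symmetric, using the left-action compatibility instead.

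I expect no genuine obstacle here; both parts are routine consequences of earlier results and definitions. The only point requiring mild care is bookkeeping in part (i) to confirm that the graph correspondence really sends $\dashv, \vdash$ to $\dashv_P, \vdash_P$ (that the second coordinate of the product matches and the first coordinate lands back in the graph via (\ref{avg-iden})); once this is observed, the conceptual route via Proposition \ref{grph} makes the verification of the five identities unnecessary. For the write-up I would favor the conceptual argument through $\mathrm{Gr}(P)$, mentioning that a direct check is equally possible.
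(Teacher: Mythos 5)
Your proposal is correct and follows essentially the same route as the paper: part (i) is obtained by transporting the diassociative structure from the subalgebra $\mathrm{Gr}(P) \subset A \oplus_\mathrm{Diass} M$ (Proposition \ref{grph}) back to $M$ via $u \leftrightsquigarrow (P(u),u)$, using the averaging identity to see that the first coordinate of the product stays in the graph, and part (ii) is the same direct computation using the morphism axioms. No gaps.
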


\begin{proof}
(i) Since $P: M \rightarrow A$ is a relative averaging operator, it follows from Proposition \ref{grph} that $\mathrm{Gr}(P)$ is a subalgebra of the diassociative algebra $A \oplus_\mathrm{Diass} M$. The inherited diassociative structure on $\mathrm{Gr}(P)$ is given by
\begin{align*}
(P(u), u) \dashv (P(v), v) = \big( \equalto{P(u) \cdot P(v)}{P ( u \cdot_M P(v) )} , u \cdot_M P(v)   \big) ~ \text{ and } ~ (P(u), u) \vdash (P(v), v) = \big( \equalto{P(u) \cdot P(v)}{P( P(u) \cdot_M v ) } , P(u) \cdot_M v   \big),
\end{align*}
for $u, v \in M$. As the vector space $M$ is isomorphic to $\mathrm{Gr}(P)$ via $u \leftrightsquigarrow (P(u), u)$, for $u \in M$, we have a diassociative algebra structure on $M$ which is precisely given by (\ref{mp}).

%For any $m, m', m'' \in M$, we have
%\begin{align*}
%(m \dashv_P m') \dashv_P m'' = (m \cdot_M P(m')) \cdot_M P(m'') = \begin{cases}
%= m \cdot_M P \big( m' \cdot_M P(m'')  \big) = m \dashv_P (m' \dashv_P m'') \\
%= m \cdot_M P \big(  P(m') \cdot_M m''  \big) = m \dashv_P (m' \vdash_P m'').
%\end{cases}
%\end{align*}
%Similarly,
%\begin{align*}
%(m \vdash_P m') \dashv_P m'' = P(m) \cdot_M m' \cdot_M P(m'') = m \vdash_P (m' \dashv_P m'' ),
%\end{align*}
%and
%\begin{align*}
%m \vdash_P (m' \vdash_P m'') = P(m) \cdot_M (P(m') \cdot_M m'') = \begin{cases}
%= P \big(  m \cdot_M P(m')  \big) \cdot_M m'' = (m \dashv_P m') \vdash_P m'' \\
%=  P \big( P(m) \cdot_M m'  \big) \cdot_M m'' = (m \vdash_P m') \vdash_P m''.
%\end{cases}
%\end{align*}
%This proves that $(M, \dashv_P, \vdash_P)$ is a diassociative algebra.

(ii) For any $u, v \in M$, we have
\begin{align*}
\psi (u \dashv_P v) = \psi (u \cdot_M P(v)) = \psi (u) \cdot^{A'}_{M'} \varphi P(v) = \psi (u) \cdot^{A'}_{M'} P' (\psi(v)) = \psi (u) \dashv_{P'} \psi(v),\\
\psi (u \vdash_P v) = \psi (P(u) \cdot_M v) = \varphi P(u) \cdot_{M'}^{A'} \psi (v) = P' (\psi (u)) \cdot_{M'}^{A'}  \psi (v) = \psi (u) \vdash_{P'} \psi (v).
\end{align*}
This proves that $\psi : M_P \rightarrow M'_{P'}$ is a morphism of diassociative algebras.
\end{proof}

The above proposition shows that there is a functor $\mathcal{F} : {\bf  rAvg} \rightarrow {\bf Diass}$ from the category of relative averaging algebras to the category of diassociative algebras. In the following, we will construct a functor in the other direction.

Let $(D, \dashv, \vdash)$ be a diassociative algebra. Let $D_{\mathrm{Ass}}$ be the quotient of $D$ by the ideal generated by the elements $a \dashv b - a \vdash b$, for $a , b \in D$. Then $D_\mathrm{Ass}$ is an associative algebra, where the product is given by $[a] \cdot [b] := [a \dashv b] = [a \vdash b] $, for $[a], [b] \in D_\mathrm{Ass}$. Moreover, the vector space $D$ is a $D_\mathrm{Ass}$-bimodule, where the left and right $D_\mathrm{Ass}$-actions on $D$ are respectively given by
\begin{align*}
[a] \cdot_D b = a \vdash b ~~~~ \text{ and } ~~~~ b \cdot_D [a] = b \dashv a, \text{ for } [a] \in D_\mathrm{Ass}, b \in D.
\end{align*}
With these notations, the quotient map $q: D \rightarrow D_\mathrm{Ass}$ is a relative averaging operator as
\begin{align*}
q(a) \cdot q(b) =~& [a] \cdot [b]
= \begin{cases}
= [a \vdash b] = [[ a] \cdot_D b] = q \big( q(a) \cdot_D b \big), \\
= [a \dashv b] = [a \cdot_D [b]] = q \big( a \cdot_D q(b) \big),
\end{cases}
\end{align*}
for $a, b \in D$. Thus, $D \xrightarrow{q} D_\mathrm{Ass}$ is a relative averaging algebra. Moreover, the induced diassociative algebra structure on $D$ coincides with the given one, as
\begin{align*}
a \dashv_q b = a \cdot_D q(b) = a \dashv b ~~~~ \text{ and } ~~~~ a \vdash_q b = q(a) \cdot_D b = a \vdash b, \text{ for } a,b \in D.
\end{align*}

Let $(D, \dashv, \vdash)$ and $(D', \dashv', \vdash')$ be two diassociative algebras and $\psi : D \rightarrow D'$ be a morphism between them. Then it is easy to verify that the pair $(\varphi, \psi)$ is a morphism of relative averaging algebras from $D \xrightarrow{q} D_\mathrm{Ass}$ to $D' \xrightarrow{q'} D'_\mathrm{Ass}$, where $\varphi : D_\mathrm{Ass} \rightarrow D'_\mathrm{Ass}$ is given by $\varphi ([a]) = [\psi (a)]$, for $[a] \in D_\mathrm{Ass}$. This construction yields a functor $\mathcal{G} : {\bf Diass} \rightarrow {\bf rAvg}$ from the category of diassociative algebras to the category of relative averaging algebras.

\begin{prop}\label{l-adj-fun}
The functor $\mathcal{G}: {\bf Diass} \rightarrow {\bf rAvg}$ is left adjoint to the functor $\mathcal{F} : {\bf rAvg} \rightarrow {\bf Diass}$. More precisely, for any diassociative algebra $(D, \dashv, \vdash)$ and a relative averaging algebra $M \xrightarrow{P} A$, we have
\begin{align*}
\mathrm{Hom}_\mathrm{\bf Diass} (D, M_P) ~ \cong ~ \mathrm{Hom}_{\bf rAvg} (D \xrightarrow{q} D_\mathrm{Ass} , M \xrightarrow{P} A).
\end{align*}
\end{prop}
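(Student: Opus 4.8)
The plan is to make the claimed bijection completely explicit, and then to read off that the two assignments are mutually inverse and natural. First I would unpack, via Definition \ref{defn-morp}, what a morphism $(\varphi, \psi)$ of relative averaging algebras from $D \xrightarrow{q} D_\mathrm{Ass}$ to $M \xrightarrow{P} A$ is: an algebra morphism $\varphi : D_\mathrm{Ass} \to A$ and a linear map $\psi : D \to M$ with $\psi([a] \cdot_D b) = \varphi([a]) \cdot_M \psi(b)$, $\psi(b \cdot_D [a]) = \psi(b) \cdot_M \varphi([a])$ and $\varphi \circ q = P \circ \psi$. Because the $D_\mathrm{Ass}$-actions on $D$ are $[a] \cdot_D b = a \vdash b$ and $b \cdot_D [a] = b \dashv a$, and because the last condition forces $\varphi([a]) = \varphi(q(a)) = P(\psi(a))$, the first two conditions become $\psi(a \vdash b) = P(\psi(a)) \cdot_M \psi(b) = \psi(a) \vdash_P \psi(b)$ and $\psi(b \dashv a) = \psi(b) \cdot_M P(\psi(a)) = \psi(b) \dashv_P \psi(a)$, which say exactly that $\psi : D \to M_P$ is a morphism of diassociative algebras. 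Thus the forgetful assignment $(\varphi, \psi) \mapsto \psi$ lands in $\mathrm{Hom}_{\mathbf{Diass}}(D, M_P)$, and since $\varphi$ is determined by $\psi$ through $\varphi([a]) = P(\psi(a))$, this assignment is injective.

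For surjectivity I would start from an arbitrary diassociative morphism $\psi : D \to M_P$ and define a candidate $\varphi$ by $\varphi([a]) := P(\psi(a))$. The crux—and the only place the averaging identity (\ref{avg-iden}) enters—is showing that the linear map $\alpha : D \to A$, $\alpha(a) = P(\psi(a))$, is multiplicative for both products. Since $\psi$ is a diassociative morphism, $\alpha(a \dashv b) = P(\psi(a) \cdot_M P(\psi(b)))$, and (\ref{avg-iden}) collapses this to $P(\psi(a)) \cdot P(\psi(b)) = \alpha(a) \cdot \alpha(b)$; symmetrically $\alpha(a \vdash b) = P(P(\psi(a)) \cdot_M \psi(b)) = \alpha(a) \cdot \alpha(b)$. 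Hence $\alpha(a \dashv b) = \alpha(a \vdash b) = \alpha(a) \cdot \alpha(b)$ for all $a, b$; in particular $\alpha$ kills every generator $a \dashv b - a \vdash b$, and the same two computations applied to a generator multiplied (on either side, by either product) by an arbitrary element show that $\alpha$ vanishes on the whole defining ideal of $D_\mathrm{Ass}$. Therefore $\alpha$ descends to a well-defined algebra morphism $\varphi : D_\mathrm{Ass} \to A$ with $\varphi \circ q = \alpha$, i.e. $\varphi([a]) = P(\psi(a))$.

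It then remains to check that $(\varphi, \psi)$ is genuinely a morphism of relative averaging algebras—the two bimodule-compatibility conditions are precisely the two diassociative-morphism identities for $\psi$ read in reverse, and $\varphi \circ q = P \circ \psi$ holds by construction—so the assignment $\psi \mapsto (\varphi, \psi)$ is inverse to $(\varphi, \psi) \mapsto \psi$, establishing the bijection. Naturality in $D$ and in $M \xrightarrow{P} A$ is then routine, as both sides record the underlying linear map $\psi$ and the formula $\varphi = P \circ \psi \circ (\text{lift})$ is visibly compatible with pre- and postcomposition; I would simply chase the two naturality squares. I expect the single real obstacle to be the well-definedness of $\varphi$ on the quotient $D_\mathrm{Ass}$, which is exactly where the relative averaging identity is indispensable: it is what forces $\alpha = P \circ \psi$ to send the two distinct diassociative products to a single associative product.
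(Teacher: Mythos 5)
Your proof is correct and follows essentially the same route as the paper: both send $\psi$ to the pair $(\varphi^\psi,\psi)$ with $\varphi^\psi([a]) = P(\psi(a))$, verify the bimodule compatibilities and $\varphi^\psi\circ q = P\circ\psi$, and observe that the forgetful assignment $(\varphi,\psi)\mapsto\psi$ is the inverse. The only difference is that you spell out the well-definedness of $\varphi^\psi$ on the quotient $D_\mathrm{Ass}$ (i.e.\ that $P\circ\psi$ annihilates the ideal generated by the elements $a\dashv b - a\vdash b$, using the averaging identity), a point the paper dismisses as ``easy to see''; this extra care is welcome but does not change the argument.
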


\begin{proof}
Let $\psi \in \mathrm{Hom}_\mathrm{\bf Diass} (D, M_P).$ We define a map $\varphi^\psi : D_\mathrm{Ass} \rightarrow A$ by $\varphi^\psi ([a]) = P (\psi (a))$, for $[a] \in D_\mathrm{Ass}$. Then it is easy to see that $\varphi^\psi$ is an algebra morphism. Moreover, we have
\begin{align*}
\psi ([a] \cdot_D b) = \psi (a \vdash b) = \psi (a) \vdash_P \psi (b) = P \psi (a) \cdot_M \psi(b) = \varphi^\psi ([a]) \cdot_M \psi (b),\\
\psi (b \cdot_D [a]) = \psi (b \dashv a) = \psi (b) \dashv_P \psi (a) = \psi (b) \cdot_M P\psi (a) = \psi (b) \cdot_M \varphi^\psi ([a]),
\end{align*}
for $[a] \in D_\mathrm{Ass}$, $b \in D$. Further, $\varphi^\psi \circ q = P \circ \psi$. Thus, $(\varphi^\psi, \psi) \in \mathrm{Hom}_{\bf rAvg} (D \xrightarrow{q} D_\mathrm{Ass} , M \xrightarrow{P} A).$ 

On the other hand, if $(\varphi, \psi) \in \mathrm{Hom}_{\bf rAvg} (D \xrightarrow{q} D_\mathrm{Ass} , M \xrightarrow{P} A)$, then $\psi \in \mathrm{Hom}_\mathrm{\bf Diass} (D, M_P)$. Finally, the above two correspondences are inverses to each other. 
\end{proof}

\subsection*{Bimodules over relative averaging algebras} Here we introduce bimodules over relative averaging algebras. We show that a bimodule over a relative averaging algebra gives two representations of the induced diassociative algebra.

\begin{defn}
Let $M \xrightarrow{P} A$ be a relative averaging algebra. A {\bf bimodule} over it consists of a tuple $(N \xrightarrow{Q} B, l, r)$ in which $N \xrightarrow{Q} B$ is a $2$-term chain complex with both $B$ and $N$ are $A$-bimodules, and  $l: M \times B \rightarrow N$ and $r: B \times M \rightarrow N$ are bilinear maps (called the pairing maps) satisfying
\begin{align}
&l (a \cdot_M u, b) = a \cdot_N l(u, b), \quad l (u \cdot_M a, b) = l (u, a \cdot_B b), \quad l(u, b \cdot_B a)= l(u, b) \cdot_N a,\label{dasm1}\\
&r(a \cdot_B b, u) = a \cdot_N r(b, u), \quad r (b \cdot_B a, u) = r (b, a \cdot_M u), \quad r (b, u \cdot_M a) = r(b, u) \cdot_N a, \label{dasm2}
\end{align}
and
\begin{align}
&P(u) \cdot_B Q(n) = Q \big( P(u) \cdot_N n  \big) = Q \big( l (u, Q(n)) \big), \label{dasm3}\\
&Q(n) \cdot_B P(u) = Q \big(  r (Q(n), u) \big) = Q \big( n \cdot_N P(u)  \big), \label{dasm4}
\end{align}
for $a \in A$, $b \in B$, $u \in M$ and $n \in N$. Sometimes we denote a bimodule as above by the complex $N \xrightarrow{Q} B$ when the bilinear maps $l$ and $r$ are clear from the context.
\end{defn}

\begin{exam}\label{adj-bimod}
(Adjoint bimodule) Let $M \xrightarrow{P} A$ be a relative averaging algebra. Then it is easy to see that the tuple $(M \xrightarrow{P} A, l_\mathrm{ad}, r_\mathrm{ad})$ is a bimodule over the relative averaging algebra $M \xrightarrow{P} A$, where the pairing maps $l_\mathrm{ad} : M \times A \rightarrow M$ and $r_\mathrm{ad} : A \times M \rightarrow M$ are respectively the given right and left $A$-actions on $M$. This is called the adjoint bimodule.
\end{exam}

\begin{exam}\label{avg-alg-bimod}
(Bimodule over an averaging algebra \cite{wang-av}) Let $(A, P)$ be an averaging algebra. A bimodule over it consists of a pair $(M, Q)$ in which $M$ is an $A$-bimodule and $Q: M \rightarrow M$ is a linear map satisfying for $a \in A$, $u \in M$,
\begin{align*}
P(a) \cdot_M Q(u) =~& Q (P(a) \cdot_M u ) = Q (a \cdot_M Q(u)),\\
Q(u) \cdot_M P(a) =~& Q (Q(u) \cdot_M a ) = Q (u \cdot_M P(a)).
\end{align*}
This is equivalent to the fact that the tuple $(M \xrightarrow{Q} M, \cdot_M, \cdot_M)$ is a bimodule over the relative averaging algebra $A \xrightarrow{P} A.$
\end{exam}

Let $A$ be an associative algebra. Given an element ${\bf r} = \sum r_{(1)} \otimes r_{(2)} \in A \otimes A$, we consider the following three elements
\begin{align*}
{\bf r}_{13} {\bf r}_{12} =& \sum r_{(1)} \cdot \widetilde{r}_{(1)} \otimes \widetilde{r}_{(2)} \otimes r_{(2)}, \qquad {\bf r}_{12} {\bf r}_{23} = \sum r_{(1)} \otimes r_{(2)} \cdot \widetilde{r}_{(1)} \otimes \widetilde{r}_{(2)} \\
&\text{ and } ~~~ {\bf r}_{23} {\bf r}_{13} = \sum r_{(1)} \otimes \widetilde{r}_{(1)} \otimes \widetilde{r}_{(2)} \cdot r_{(2)} ~~ \text{ of }  ~A \otimes A \otimes A.
\end{align*}
Here $\sum \widetilde{r}_{(1)} \otimes \widetilde{r}_{(2)}$ is another copy of ${\bf r}$. An element ${\bf r} \in A \otimes A$ is called an {\bf averaging element} if it satisfies
\begin{align}
{\bf r}_{13} {\bf r}_{12} = {\bf r}_{12} {\bf r}_{23} = {\bf r}_{23} {\bf r}_{13}.
\end{align}
Let $r = \sum r_{(1)} \otimes r_{(2)} \in A \otimes A$ be an averaging element. Then the map $P : A \rightarrow A$ defined by $P(a) = \sum r_{(1)} \cdot a \cdot r_{(2)}$, for $a \in A$, is an averaging operator on $A$. To see this, we observe that
\begin{align*}
P(a) \cdot P(a') =~& \sum r_{(1)} \cdot a \cdot r_{(2)} \cdot \widetilde{r}_{(1)} \cdot a' \cdot \widetilde{r}_{(2)} \\
=~& \begin{cases}  
 = \sum r_{(1)} \cdot \widetilde{r}_{(1)} \cdot a \cdot \widetilde{r}_{(2)} \cdot a' \cdot r_{(2)}  \quad (\text{since } {\bf r}_{13} {\bf r}_{12} = {\bf r}_{12} {\bf r}_{23}) ~~= P (P(a) \cdot a'),\\
 = \sum r_{(1)} \cdot a \cdot \widetilde{r}_{(1)} \cdot a' \cdot \widetilde{r}_{(2)} \cdot r_{(2)} \quad (\text{since } {\bf r}_{12} {\bf r}_{23} = {\bf r}_{23} {\bf r}_{13}) ~~= P(a \cdot P(a')),
\end{cases}
\end{align*}
for $a, a' \in A$. In other words, $(A,P)$ is an averaging algebra. If $M$ is any $A$-bimodule, we define a linear map $Q : M \rightarrow M$ by $Q(u) = \sum r_{(1)} \cdot_M u \cdot_M r_{(2)}$, for $u \in M$. Then it is easy to verify that $(M, Q)$ is a bimodule over the averaging algebra $(A,P)$.

\begin{exam}
Let $M \xrightarrow{P} A$ and $M' \xrightarrow{P'} A'$ be two relative averaging algebras, and let $(\varphi, \psi)$ be a morphism between them (see Definition \ref{defn-morp}). Then the tuple $(M' \xrightarrow{P'} A', l, r)$ is a bimodule over the relative averaging algebra $M \xrightarrow{P} A$, where the $A$-bimodule structure on $A'$ is induced by the algebra morphism $\varphi : A \rightarrow A'$, and the $A$-bimodule structure on $M'$ is given by $a \cdot_{M'} m' = \varphi (a) \cdot_{M'}^{A'} m'$ and $m' \cdot_{M'} a = m' \cdot_{M'}^{A'} \varphi (a)$, for $a \in A$, $m' \in M'$. Moreover, the pairing maps $l: M \times A' \rightarrow M'$ and $r: A' \times M \rightarrow M'$ are respectively given by
\begin{align*}
l (u, a') = \psi (u) \cdot_{M'}^{A'} a' \quad \text{ and } \quad r (a', u) = a' \cdot_{M'}^{A'} \psi (u), \text{ for } u \in M, a' \in A'.
\end{align*}
\end{exam}

\medskip

Note that any bimodule over an associative algebra can be dualized. More generally, if $A$ is an associative algebra and $M$ is an $A$-bimodule then the dual space $M^*$ can be equipped with an $A$-bimodule structure with left and right $A$-actions given by
\begin{align*}
(a \cdot_{M^*} f) (u) = f(u \cdot_M a) ~~~~ \text{ and } ~~~~ (f \cdot_{M^*} a) (u) = f(a \cdot_M u), \text{ for } a \in A, ~f \in M^*,~ u \in M.
\end{align*}
In the following result, we give the dual construction of a bimodule over a relative averaging algebra.

\begin{prop}
Let $M \xrightarrow{P} A$ be a relative averaging algebra and $(N \xrightarrow{Q} B, l, r)$ be a bimodule over it. Then $(B^* \xrightarrow{Q^*} N^*, l^*, r^*)$ is also a bimodule, where $B^*, N^*$ are equipped with dual $A$-bimodule structures and the pairings $l^* : M \times N^* \rightarrow B^*$ and $r^*: N^* \times M \rightarrow B^*$ are respectively given by
\begin{align*}
l^* (u, f_N)(b) = f_N (r (b, u)) ~~~ \text{ and } ~~~ r^* (f_N, u) (b) = f_N (l(u, b)), ~\text{ for } u \in M,~f_N \in N^*,~ b \in B.
\end{align*}
\end{prop}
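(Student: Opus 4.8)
The plan is to check directly that the dual tuple $(B^* \xrightarrow{Q^*} N^*, l^*, r^*)$ satisfies all the defining axioms (\ref{dasm1})--(\ref{dasm4}) of a bimodule over $M \xrightarrow{P} A$, read through the dictionary in which $B^*$ plays the role of the source ``$N$'' and $N^*$ plays the role of the target ``$B$''. First I would note that $Q^* : B^* \to N^*$ is a linear map, so $B^* \xrightarrow{Q^*} N^*$ is automatically a $2$-term chain complex, and that $B^*, N^*$ carry the dual $A$-bimodule structures recalled just before the statement. Each remaining axiom is an equality of linear functionals, so I would verify it by evaluating both sides on a suitable test element (an element of $B$ for the $l^*, r^*$ compatibilities, an element of $N$ for the $Q^*$ relations) and unwinding the definitions of $l^*$, $r^*$, $Q^*$ together with the dual action formulas $(a \cdot_{X^*} g)(x) = g(x \cdot_X a)$ and $(g \cdot_{X^*} a)(x) = g(a \cdot_X x)$.

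The key structural observation is that dualization swaps the two pairings: $l^*$ is defined in terms of $r$ while $r^*$ is defined in terms of $l$, and simultaneously the dual $A$-actions interchange left and right. Consequently the three identities in (\ref{dasm1}) for the dual reduce to the three identities in (\ref{dasm2}) for the original, and vice versa. For instance, to check $l^*(a \cdot_M u, f_N) = a \cdot_{B^*} l^*(u, f_N)$ I would evaluate on $b \in B$: the left side becomes $f_N(r(b, a \cdot_M u))$ and the right side becomes $f_N(r(b \cdot_B a, u))$, so the claim is exactly the middle identity $r(b \cdot_B a, u) = r(b, a \cdot_M u)$ of (\ref{dasm2}). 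The other two $l^*$-compatibilities and the three $r^*$-compatibilities follow in the same mechanical way, each time landing on the ``opposite'' identity among (\ref{dasm1})--(\ref{dasm2}).

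The substantive part is the pair of relations (\ref{dasm3})--(\ref{dasm4}) for the dual, which involve $P$, $Q^*$ and the pairings together. Here I would evaluate on $n \in N$ and use $Q^*(\phi)(n) = \phi(Q(n))$. Tracing through, the dual version of (\ref{dasm3}), namely $P(u) \cdot_{N^*} Q^*(g) = Q^*(P(u) \cdot_{B^*} g) = Q^*(l^*(u, Q^*(g)))$ for $g \in B^*$, collapses after evaluation to the two equalities $Q(n \cdot_N P(u)) = Q(n) \cdot_B P(u) = Q(r(Q(n), u))$, which are precisely (\ref{dasm4}); dually, the dual version of (\ref{dasm4}) collapses to (\ref{dasm3}). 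I expect this bookkeeping to be the main (though still routine) obstacle: one must keep careful track of which side each copy of $P(u)$ ends up acting on after passing to the dual, since it is this placement, together with the $l \leftrightarrow r$ swap, that determines whether the original (\ref{dasm3}) or (\ref{dasm4}) is the identity one needs. Once all the evaluations are matched to their originals, the dual tuple satisfies every axiom and is therefore a bimodule over $M \xrightarrow{P} A$.
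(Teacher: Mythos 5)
Your proposal is correct and follows essentially the same route as the paper's proof: a direct verification of each dual axiom by evaluating the functionals on test elements of $B$ or $N$, with the identities in (\ref{dasm1}) and (\ref{dasm2}) swapping roles under dualization and likewise (\ref{dasm3}) and (\ref{dasm4}) exchanging with each other. Your sample computation for $l^*(a\cdot_M u, f_N) = a\cdot_{B^*} l^*(u,f_N)$ is exactly the first displayed calculation in the paper.
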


\begin{proof}
For any $a \in A$, $u \in M$, $f_N \in N^*$ and $b \in B$, we first observe that
\begin{align*}
l^* (a \cdot_M u, f_N) (b) = f_N (r (b, a \cdot_M u)) \stackrel{(\ref{dasm2})}{=} f_N (r (b \cdot_B a, u)) = l^* (u, f_N) (b \cdot_B a)
= (a \cdot_{B^*} l^*(u, f_N)) (b),
\end{align*}
\begin{align*}
l^* (u \cdot_M a, f_N) (b) = f_N (r (b, u \cdot_M a)) \stackrel{(\ref{dasm2})}{=} f_N (r (b ,u) \cdot_N a) = (a \cdot_{N^*} f_N) (r (b,u)) 
= l^* (u, a \cdot_{N^*} f_N)(b),
\end{align*}
\begin{align*}
l^* (u, f_N \cdot_{N^*} a)(b) = f_N (a \cdot_N r (b,u)) \stackrel{(\ref{dasm2})}{=} f_N (r (a \cdot_B b, u)) 
= l^* (u, f_N) (a \cdot_B b)  = (l^* (u, f_N) \cdot_{B^*} a)(b).
\end{align*}
This shows that the identities in (\ref{dasm1}) hold for the dual structure. Similarly, one can verify the identities in (\ref{dasm2}) for the dual structure. Finally, for any $u \in M$, $f_B \in B^*$ and $n \in N$, we have
\begin{align*}
\big(  P(u) \cdot_{N^*} Q^*(f_B) \big) (n) =~& Q^*(f_B) \big(n \cdot_N P(u)  \big) \\
=~& f_B \big( Q (n \cdot_N P(u))  \big) \\
=~& \begin{cases}
= f_B \big(  Q(n) \cdot_B P(u) \big) = \big(  P(u) \cdot_{B^*}f_B \big) (Q(n)) = Q^* \big( P(u) \cdot_{B^*} f_B \big)(n), \\
= f_B \big(  Q \circ r (Q(n), u) \big) = l^* \big( u, Q^* (f_B)  \big) (Q(n)) = Q^* \big( l^* (u, Q^* (f_B))   \big) (n).
\end{cases}
\end{align*}
Similarly, we have
\begin{align*}
\big(   Q^* (f_B) \cdot_{N^*} P(u) \big) (n) =~& Q^* (f_B) \big(P(u) \cdot_N n \big) \\
=~& f_B \big( Q(P(u) \cdot_N n)  \big) \\
=~& \begin{cases}
= f_B \big(  Q \circ l (u, Q(n))  \big)  = r^* \big( Q^* (f_B), u  \big) (Q(n)) = Q^* \big( r^* (Q^* (f_B), u)   \big) (n),\\
= f_B \big( P(u) \cdot_B Q(n) \big) = \big( f_B \cdot_{B^*} P(u)  \big) (Q(n)) = Q^* \big( f_B \cdot_{B^*} P(u)  \big) (n).
\end{cases}
\end{align*}
This shows that the identities in (\ref{dasm3}) and (\ref{dasm4}) also hold for the dual structure. Hence $(B^* \xrightarrow{Q^*} N^*, l^*, r^*)$ is a bimodule over the relative averaging algebra $M \xrightarrow{P} A$.
\end{proof}

\medskip

Let $M \xrightarrow{P} A$ be a relative averaging algebra. Then $(A^* \xrightarrow{P^*} M^*, l^*, r^*)$ is a bimodule, where the pairings $l^* : M \times M^* \rightarrow A^*$ and $r^* : M^* \times M \rightarrow A^*$ are respectively given by
\begin{align*}
l^* (u, f_M)(a) = f_M (a \cdot_M u) ~~~ \text{ and } ~~~ r^* (f_M, u)(a) = f_M (u \cdot_M a), ~\text{ for } u \in M, ~f_M \in M^*, ~ a \in A.
\end{align*}
Note that this bimodule is dual to the adjoint bimodule given in Example \ref{adj-bimod}.

\medskip

Let $M \xrightarrow{P} A$ be a relative averaging algebra and $(N \xrightarrow{Q} B, l, r)$ be a bimodule over it. Since $B$ is an $A$-bimodule, one can consider the semidirect product algebra $A \oplus B$ with the product
\begin{align*}
(a, b) \cdot_\ltimes (a', b') = \big( a \cdot a', a \cdot_B b' + b \cdot_B a'  \big), \text{ for } (a, b), (a', b') \in A \oplus B.
\end{align*}
It has been shown in \cite{das-art} that the vector space $M \oplus N$ carries a bimodule structure over the semidirect product algebra $A \oplus B$ with left and right $(A \oplus B)$-actions are respectively given by
\begin{align}\label{apb}
(a, b) \triangleright (u, n) = (a \cdot_M u, a \cdot_N n + r(b,u)) ~~\text{ and } ~~ (u,n) \triangleleft (a, b) = (u \cdot_M a, l(u,b) + n \cdot_N a),
\end{align}
for $(a, b) \in A \oplus B$ and $(u,n) \in M \oplus N$. With these notations, we have the following result.

\begin{thm}\label{thm-semid}
(Semidirect product) Let $M \xrightarrow{P} A$ be a relative averaging algebra and $(N \xrightarrow{Q} B, l, r)$ be a bimodule over it. Then $M \oplus N \xrightarrow{P \oplus Q} A \oplus B$ is a relative averaging algebra.
\end{thm}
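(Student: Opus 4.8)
The plan is to verify the relative averaging identity (\ref{avg-iden}) for the map $P \oplus Q$ directly. Recall that $A \oplus B$ is an associative algebra under the semidirect product $\cdot_\ltimes$, and that $M \oplus N$ is an $(A \oplus B)$-bimodule with the left and right actions $\triangleright$ and $\triangleleft$ of (\ref{apb}); the latter fact is the cited result of \cite{das-art} and rests on the compatibilities (\ref{dasm1}) and (\ref{dasm2}), which I will take as given. Hence it suffices to check that for all $(u, n), (v, m) \in M \oplus N$ the threefold equality
\begin{align*}
(P \oplus Q)(u, n) \cdot_\ltimes (P \oplus Q)(v, m) &= (P \oplus Q)\big( (P \oplus Q)(u,n) \triangleright (v,m) \big) \\
&= (P \oplus Q)\big( (u,n) \triangleleft (P \oplus Q)(v,m) \big)
\end{align*}
holds, where by definition $(P \oplus Q)(u,n) = (P(u), Q(n))$.

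First I would expand the left-hand side using the semidirect product multiplication, obtaining
\begin{align*}
(P(u), Q(n)) \cdot_\ltimes (P(v), Q(m)) = \big( P(u) \cdot P(v), ~ P(u) \cdot_B Q(m) + Q(n) \cdot_B P(v) \big).
\end{align*}
Next I would compute the middle and right expressions using the action formulas (\ref{apb}) together with the linearity of $P$ and $Q$. The first ($A$-)component of all three expressions coincides precisely by the relative averaging identity (\ref{avg-iden}) for $P$, namely $P(u) \cdot P(v) = P(P(u) \cdot_M v) = P(u \cdot_M P(v))$, so this component presents no difficulty.

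The crux is the second ($B$-)component. Here I would split each of the three $B$-components into two summands via the linearity of $Q$ and match them term by term. The summand $P(u) \cdot_B Q(m)$ equals $Q(P(u) \cdot_N m)$ on the $\triangleright$-side and $Q(l(u, Q(m)))$ on the $\triangleleft$-side, both by (\ref{dasm3}); the summand $Q(n) \cdot_B P(v)$ equals $Q(r(Q(n), v))$ on the $\triangleright$-side and $Q(n \cdot_N P(v))$ on the $\triangleleft$-side, both by (\ref{dasm4}). Adding the two summands reproduces exactly the second components of the middle and the right expressions, completing the verification.

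There is no serious obstacle; the only point requiring care is the bookkeeping, namely keeping straight which of the two compatibility conditions (\ref{dasm3}) and (\ref{dasm4}) governs which summand, and observing that each of these conditions supplies the $\triangleright$-side and the $\triangleleft$-side equalities simultaneously. This is what guarantees that all three $B$-components agree at once, and hence that $P \oplus Q$ is a relative averaging operator.
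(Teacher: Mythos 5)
Your proposal is correct and follows essentially the same route as the paper: both take the semidirect product algebra and bimodule structure as given (citing \cite{das-art}), expand $(P\oplus Q)(u,n)\cdot_\ltimes (P\oplus Q)(v,m)$, and match the $B$-components term by term using (\ref{dasm3}) for the $P(u)\cdot_B Q(m)$ summand and (\ref{dasm4}) for the $Q(n)\cdot_B P(v)$ summand. The bookkeeping you describe is exactly what appears in the paper's computation.
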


\begin{proof}
We have already seen that $A \oplus B$ is an associative algebra (with the semidirect product structure) and $M \oplus N$ is an $(A \oplus B)$-bimodule with left and right actions given by (\ref{apb}). Next, for any $(u,n), (u',n') \in M \oplus N$, we observe that 
\begin{align*}
(P \oplus Q) (u,n) \cdot_\ltimes (P \oplus Q) (u', n') 
&= (P(u), Q(n)) \cdot_\ltimes (P(u'), Q(n')) \\
&= \big(  P(u) \cdot P(u'), P(u) \cdot_B Q(n') + Q (n) \cdot_B P(u')  \big) \\
&= \big(  P (P(u) \cdot_M u'), Q (P(u) \cdot_N n') + Q (r (Q(n), u'))  \big) \\
&= (P \oplus Q) \big( P(u) \cdot_M u', P(u) \cdot_N n' + r (Q(n), u')   \big) \\
&= (P \oplus Q) \big(  ((P \oplus Q) (u,n)) \triangleright (u',n')  \big).
\end{align*}
Also, we have
\begin{align*}
(P \oplus Q) (u,n) \cdot_\ltimes (P \oplus Q) (u', n') 
&= \big(  P(u) \cdot P(u'), P(u) \cdot_B Q(n') + Q (n) \cdot_B P(u')  \big) \\
&= \big( P (u \cdot_M P(u')), Q (l (u, Q(n'))) + Q (n \cdot_N P(u')) \big) \\
&= (P \oplus Q) \big(  u \cdot_M P(u'), l(u, Q(n')) + n \cdot_N P(u')  \big) \\
&= (P \oplus Q) \big(  (u,n) \triangleleft ((P \oplus Q) (u', n'))  \big).
\end{align*}
This proves that $P \oplus Q : M \oplus N \rightarrow A \oplus B$ is a relative averaging operator. In other words, $M \oplus N \xrightarrow{P \oplus Q} A \oplus B$ is a relative averaging algebra.
\end{proof}

\medskip

\begin{prop}
Let $M \xrightarrow{P} A$ be a relative averaging algebra and $(N \xrightarrow{Q} B, l, r)$ be a bimodule over it. Then the vector space $N$ carries a representation of the induced diassociative algebra $M_P$ with the action maps given by
\begin{align}\label{bim-str-maps}
\begin{cases}
\dashv : M_P \otimes N \rightarrow N, \quad u \dashv n = l (u, Q(n)), \\
\vdash : M_P \otimes N \rightarrow N, \quad u \vdash n = P(u) \cdot_N n, \\
\dashv : N \otimes M_P \rightarrow N, \quad n \dashv u = n \cdot_N P(u), \\
\vdash : N \otimes M_P \rightarrow N, \quad n \vdash u = r (Q(n), u).
\end{cases}
\end{align}
\end{prop}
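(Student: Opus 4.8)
The plan is to deduce all of the defining identities of a representation at once from the semidirect product construction, rather than verifying them one by one. By Theorem \ref{thm-semid}, the datum $M \oplus N \xrightarrow{P \oplus Q} A \oplus B$ is itself a relative averaging algebra, so applying the operations (\ref{mp}) to it endows the vector space $M \oplus N$ with an induced diassociative algebra structure $(M \oplus N)_{P \oplus Q}$. Explicitly, using the actions (\ref{apb}), one has $(u,n) \dashv (u',n') = (u,n) \triangleleft (P(u'), Q(n'))$ and $(u,n) \vdash (u',n') = (P(u), Q(n)) \triangleright (u',n')$ for all $(u,n), (u',n') \in M \oplus N$. The strategy is then to recognize $N$ as a square-zero ideal sitting over the subalgebra $M$ inside this diassociative algebra and to read off the induced representation.

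First I would analyse the two coordinate subspaces. Since $P(0) = 0$, $Q(0) = 0$ and the maps $l, r$ and the module actions are bilinear, a direct computation gives $(u,0) \dashv (u',0) = (u \dashv_P u', 0)$ and $(u,0) \vdash (u',0) = (u \vdash_P u', 0)$, so that $u \mapsto (u,0)$ identifies $M_P$ with a subalgebra of $(M \oplus N)_{P \oplus Q}$; likewise $(0,n) \dashv (0,n') = (0,n) \vdash (0,n') = (0,0)$, so $0 \oplus N$ is a square-zero ideal. Next I would compute the four mixed products and extract their $N$-components:
\begin{align*}
(u,0) \dashv (0,n) &= (0,\, l(u, Q(n))), & (u,0) \vdash (0,n) &= (0,\, P(u) \cdot_N n), \\
(0,n) \dashv (u,0) &= (0,\, n \cdot_N P(u)), & (0,n) \vdash (u,0) &= (0,\, r(Q(n), u)).
\end{align*}
The four second components are exactly the action maps listed in (\ref{bim-str-maps}).

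It then remains to invoke the standard principle --- the diassociative analogue of the equivalence between representations of an associative algebra and square-zero extensions --- that in any diassociative algebra of the form $D \oplus N$ with $D$ a subalgebra and $N$ a square-zero ideal, the four mixed operations make $N$ a representation of $D$. Indeed, each of the fifteen identities defining a representation of $M_P$ on $N$ arises as the $N$-component of one of the identities (\ref{di-id}) in $(M \oplus N)_{P \oplus Q}$, evaluated with two arguments taken from $M \oplus 0$ and the remaining one from $0 \oplus N$, as that argument ranges over the three slots; since $(M \oplus N)_{P \oplus Q}$ is a genuine diassociative algebra, every such identity holds automatically (the $M$-component merely reproducing the diassociative identity already recorded for $M_P$). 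This establishes the proposition with the action maps (\ref{bim-str-maps}).

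The only real care needed is bookkeeping: one must keep the left and right actions straight via the conventions $\triangleright$ and $\triangleleft$ of (\ref{apb}) and confirm the square-zero property so that no unwanted terms survive. A fully direct alternative --- verifying all fifteen identities by hand from the bimodule axioms (\ref{dasm1})--(\ref{dasm4}) together with the averaging identity (\ref{avg-iden}) --- is of course possible, but it is considerably more laborious, since the semidirect-product route packages precisely that computation into the proof of Theorem \ref{thm-semid}.
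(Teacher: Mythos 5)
Your proposal is correct and follows essentially the same route as the paper: the paper likewise passes to the semidirect product relative averaging algebra $M \oplus N \xrightarrow{P \oplus Q} A \oplus B$ of Theorem \ref{thm-semid}, forms the induced diassociative algebra $(M \oplus N)_{P \oplus Q}$, and reads off the representation of $M_P$ on $N$ from the mixed products. Your write-up merely makes explicit the square-zero-ideal principle that the paper leaves implicit.
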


\begin{proof}
To prove the result, we consider the semidirect product relative averaging algebra $M \oplus N \xrightarrow{P \oplus Q} A \oplus B$ given in Theorem \ref{thm-semid}. Then it follows that the vector space $M \oplus N$ carries a diassociative algebra structure (denoted by $(M \oplus N)_{P \oplus Q}$) with the operations
\begin{align*}
(u,n) \dashv_{P \oplus Q} (u', n') = (u , n) \triangleleft \big( P(u'), Q(n') \big) = \big(  u \dashv_P u' , l (u, Q(n')) + n \cdot_N P(u') \big), \\
(u,n) \vdash_{P \oplus Q} (u', n') =  \big(  P(u), Q(n) \big) \triangleright (u', n') = \big( u \vdash_P u', P(u) \cdot_N n' + r (Q(n), u')  \big),
\end{align*}
for $(u,n), (u', n') \in M \oplus N$. This shows that the diassociative algebra $M_P$ has a representation on $N$ with the structure maps (\ref{bim-str-maps}), and the diassociative algebra $(M \oplus N)_{P \oplus Q}$ is nothing but the semidirect product of the diassociative algebra $M_P$ with the representation $N$.
\end{proof}

\begin{prop}\label{aa-b}
Let $M \xrightarrow{P} A$ be a relative averaging algebra and $(N \xrightarrow{Q} B, l, r)$ be a bimodule over it. Then the vector space $B$ can be given a representation of the induced diassociative algebra $M_P$ with the action maps given by
\begin{align}\label{nij-act-di}
\begin{cases}
 \dashv : M_P \otimes B \rightarrow B, \quad u \dashv b = P(u) \cdot_B b - Q (l (u, b)), \\
 \vdash : M_P \otimes B \rightarrow B, \quad u \vdash b = P(u) \cdot_B b, \\
 \dashv : B \otimes M_P \rightarrow B, \quad b \dashv u = b \cdot_B P(u),\\
 \vdash : B \otimes M_P \rightarrow B, \quad b \vdash u = b \cdot_B P(u) - Q (r (b, u)).
\end{cases}
\end{align}
\end{prop}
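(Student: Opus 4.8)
The plan is to avoid checking the fifteen representation identities directly and instead to realize the claimed action maps (\ref{nij-act-di}) as the restriction of a Nijenhuis-deformed diassociative product. Concretely, I would exhibit $M \oplus B$ as a diassociative subalgebra of a Nijenhuis deformation of a large diassociative algebra, recognize that subalgebra as the semidirect product of $M_P$ with $B$ equipped with the maps (\ref{nij-act-di}), and then invoke the standard equivalence ``$M_P \ltimes B$ is diassociative'' $\Longleftrightarrow$ ``$B$ is a representation of $M_P$''. This mirrors the proof of the preceding proposition, where the representation on $N$ was read off from the induced diassociative algebra of a semidirect product.

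First I would form, via Theorem \ref{thm-semid}, the semidirect product relative averaging algebra $M \oplus N \xrightarrow{P \oplus Q} A \oplus B$, whose underlying algebra is $(A \oplus B, \cdot_\ltimes)$ and whose bimodule is $M \oplus N$ with the actions $\triangleright, \triangleleft$ of (\ref{apb}). By Proposition \ref{avg-nij}, the map $\mathcal{N} := \mathcal{N}_{P \oplus Q}$ on $(A \oplus B) \oplus (M \oplus N)$ given by $\mathcal{N}\big( (a,b),(u,n) \big) = \big( (P(u), Q(n)), (0,0) \big)$ is a Nijenhuis operator on the diassociative algebra $(A \oplus B) \oplus_\mathrm{Diass} (M \oplus N)$. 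Writing $x \dashv_\mathcal{N} y := \mathcal{N}(x) \dashv y + x \dashv \mathcal{N}(y) - \mathcal{N}(x \dashv y)$ and $x \vdash_\mathcal{N} y := \mathcal{N}(x) \vdash y + x \vdash \mathcal{N}(y) - \mathcal{N}(x \vdash y)$ for the deformed operations, the Nijenhuis condition guarantees (this being the diassociative analogue of the classical associative fact, immediate from the identities in Proposition \ref{avg-nij}) that $\dashv_\mathcal{N}, \vdash_\mathcal{N}$ again form a diassociative algebra on $(A \oplus B) \oplus (M \oplus N)$.

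Next I would single out the subspace $W = \{ ((0,b),(u,0)) : b \in B,\ u \in M \}$, identified with $M \oplus B$ via $((0,b),(u,0)) \leftrightarrow (u,b)$. Expanding the deformed products on $W$ — substituting $\mathcal{N}\big((0,b),(u,0)\big) = ((P(u),0),(0,0))$ and using the mixed components of (\ref{apb}) — shows $W$ is closed under $\dashv_\mathcal{N}, \vdash_\mathcal{N}$, with
\begin{align*}
(u,b) \dashv_\mathcal{N} (v,c) &= \big( u \dashv_P v,\ (P(u) \cdot_B c - Q(l(u,c))) + b \cdot_B P(v) \big),\\
(u,b) \vdash_\mathcal{N} (v,c) &= \big( u \vdash_P v,\ P(u) \cdot_B c + (b \cdot_B P(v) - Q(r(b,v))) \big).
\end{align*}
Comparing the $B$-components with (\ref{nij-act-di}), these are exactly the semidirect product operations of $M_P$ with the candidate representation $B$, i.e.\ $W \cong M_P \ltimes B$. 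Being a subspace closed under the two deformed operations, $W$ is a diassociative subalgebra of the deformed diassociative algebra, hence $M_P \ltimes B$ is diassociative, which is precisely the assertion that $B$ is a representation of $M_P$ via (\ref{nij-act-di}).

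The only real computation is the expansion of the two deformed products on $W$; this is where the correction terms $Q(l(u,c))$ and $Q(r(b,v))$ appear, arising from applying $\mathcal{N}$ to the off-diagonal parts of (\ref{apb}), and one must recognize them as exactly the twists in the left-$\dashv$ and right-$\vdash$ actions of (\ref{nij-act-di}). The defining compatibilities (\ref{dasm1})--(\ref{dasm4}) of the bimodule do not need to be re-invoked by hand: they are already absorbed into Theorem \ref{thm-semid} and Proposition \ref{avg-nij}, so once the ambient deformed product is known to be diassociative, every representation identity for $B$ follows automatically from the closure of $W$. The main obstacle is therefore purely bookkeeping—keeping the four embedded components straight while expanding $\dashv_\mathcal{N}$ and $\vdash_\mathcal{N}$—together with citing (or briefly verifying) that a Nijenhuis operator deforms a diassociative algebra into a diassociative one.
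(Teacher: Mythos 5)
Your proposal is correct and follows essentially the same route as the paper: form the semidirect product relative averaging algebra $M \oplus N \xrightarrow{P \oplus Q} A \oplus B$, deform the diassociative algebra $(A \oplus B) \oplus_\mathrm{Diass} (M \oplus N)$ by the Nijenhuis operator $\mathcal{N}_{P \oplus Q}$ of Proposition \ref{avg-nij}, and restrict to the subspace $B \oplus M$ to read off the action maps (\ref{nij-act-di}). The deformed products you compute on that subspace agree with those in the paper's proof, so no further comment is needed.
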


\begin{proof}
We first consider the semidirect product relative averaging algebra $M \oplus N \xrightarrow{P \oplus Q} A \oplus B$. Then it follows from Proposition \ref{avg-nij} that the map 
\begin{align*}
\mathcal{N}_{P \oplus Q} : (A \oplus B) \oplus (M \oplus N) \rightarrow (A \oplus B) \oplus (M \oplus N),~(a,b, u, n) \mapsto (P(u), Q(v), 0 ,0)
\end{align*}
is a Nijenhuis operator on the diassociative algebra $(A \oplus B) \oplus_\mathrm{Diass} (M \oplus N)$. Hence there is a new diassociative structure on the vector space $A \oplus B \oplus M \oplus N$ (deformed by the Nijenhuis operator $\mathcal{N}_{P \oplus Q}$). It can be checked that this diassociative structure on $A \oplus B \oplus M \oplus N$ restricts a diassociative algebra structure on the vector space $B \oplus M$. The structure is precisely given by
\begin{align*}
(b,u) \dashv (b', v) =  ( \underbrace{b \cdot_B P(v)}_{:=~ b \dashv v} + \underbrace{P(u) \cdot_B b' - Q (l (u, b'))}_{:= ~u \dashv b'}, u \dashv_P v), \\
(b,u) \vdash (b', v) =  ( \underbrace{b \cdot_B P(v) - Q (r(b, v))}_{:=~ b \vdash v} + \underbrace{P(u) \cdot_B b'}_{:=~u \vdash b'}, u \vdash_P v),
\end{align*}
for $(b, u), (b', v) \in B \oplus M.$ This expression shows that the diassociative algebra $M_P$ has a representation on $B$ with the action maps (\ref{nij-act-di}).
\end{proof}

\section{The controlling algebra and cohomology for relative averaging operators}\label{sec-4}
Given an associative algebra $A$ and an $A$-bimodule $M$, here we construct a graded Lie algebra whose Maurer-Cartan elements are precisely relative averaging operators. This characterization allows us to define the cohomology of a relative averaging operator. Subsequently, we show that this cohomology of a relative averaging operator can be seen as the cohomology of the induced diassociative algebra with coefficients in a suitable representation.

Let $A$ be an associative algebra and $M$ be an $A$-bimodule.
% Let $\mu : A^{\otimes 2} \rightarrow A$ be the associative multiplication on $A$ and $l: A \otimes M \rightarrow M$, $r: M \otimes A \rightarrow M$ be the left and right $A$-actions on $M$ \textcolor{red}{ write in terms of cdot, $cdot_M$}. 
Consider the vector space $D= A \oplus M$ and the Majumdar-Mukherjee graded Lie algebra
\begin{align*}
\mathfrak{g} = \big(  \oplus_{n = 0}^\infty CY^{n+1} (D,D) = \oplus_{n = 0}^\infty \mathrm{Hom}({\bf k}[Y_{n+1}] \otimes D^{\otimes n+1}, D) , [~,~]_\mathsf{MM} \big)
\end{align*}
associated with the vector space $D$. Note that the diassociative algebra structure on $D$ (see Proposition \ref{am-diass}) gives rise to a Maurer-Cartan element $\Delta \in \mathfrak{g}_1$ in the graded Lie algebra $\mathfrak{g}$ (i.e. we have $[\Delta, \Delta]_\mathsf{MM} = 0$), where $\Delta$ is given by
\begin{align}\label{del-de}
\begin{cases}
\Delta \big( \begin{tikzpicture}[scale=0.1]
\draw (6,0) -- (8,-2);    \draw (8,-2) -- (10,0);     \draw (8,-2) -- (8,-4);     \draw (9,-1) -- ( 8,0);
\end{tikzpicture}  ; (a, u), (b, v) \big) = (a, u) \dashv (b, v) = (a \cdot b, u \cdot_M b), \\
\Delta \big(  \begin{tikzpicture}[scale=0.1]
\draw (0,0) -- (2,-2); \draw (2,-2) -- (4,0); \draw (2,-2) -- (2,-4); \draw (1,-1) -- (2,0); 
\end{tikzpicture}   ; (a, u), (b, v) \big) = (a, u) \vdash (b, v) = (a \cdot b, a \cdot_M v),
\end{cases}
\end{align}
for $(a, u), (b, v) \in D$.
Moreover, it is easy to see that the graded subspace
\begin{align*}
\mathfrak{a} = \oplus_{n = 0}^\infty CY^{n+1} (M,A) = \oplus_{n = 0}^\infty \mathrm{Hom} ({\bf k}[Y_{n+1}] \otimes M^{\otimes n+1}, A)
\end{align*}
is an abelian Lie subalgebra. Let $p: \mathfrak{g} \rightarrow \mathfrak{g}$ be the projection onto the subspace $\mathfrak{a}$. Then $\mathrm{ker}(p) \subset \mathfrak{g}$ is a graded Lie subalgebra and $\mathrm{im}(p) = \mathfrak{a}$. Moreover, $\Delta \in \mathrm{ker}(p)_1$. Hence by Kosmann-Schwarzbach's derived bracket construction \cite{yks}, 
%the graded vector space $\mathfrak{a}$ inherits an  $L_\infty$-algebra structure $\{ l_k \}_{k=1}^\infty$. {However, in this case, it is easy to verify that $l_k= 0$ for $k \neq 2$}. Therefore, 
the shifted graded vector space $s \mathfrak{a} = \oplus_{n = 1}^\infty CY^n (M,A) = \oplus_{n = 1}^\infty \mathrm{Hom} ({\bf k} [Y_n] \otimes M^{\otimes n}, A)$ carries a graded Lie algebra structure with the bracket
\begin{align*}
\llbracket f, g \rrbracket := (-1)^m ~[[ \Delta, f]_\mathsf{MM}, g ]_\mathsf{MM},
\end{align*}
for $f \in CY^m (M, A)$ and $g \in CY^n (M, A)$. In terms of $\circ_i$ operations (see (\ref{partial-comp})), the above bracket is
\begin{align*}
\llbracket f, g \rrbracket 
&= \sum_{i=1}^m (-1)^{(i-1)n} f \circ_i (\Delta \circ_1 g) - \sum_{i=1}^m (-1)^{in}  f \circ_i (\Delta \circ_2 g) \\
& \quad - (-1)^{mn} \big\{  \sum_{i=1}^n (-1)^{(i-1)m} g \circ_i (\Delta \circ_1 f) - \sum_{i=1}^n (-1)^{im}  g \circ_i (\Delta \circ_2 f)   \big\} \\
& \quad + (-1)^{mn} ( \Delta \circ_1 f) \circ_{m+1} g - ( \Delta \circ_1 g) \circ_{n+1} f.
\end{align*}
Explicitly, the bracket is given by
\begin{align}\label{tr-tr}
&\llbracket f, g \rrbracket (y; u_1, \ldots, u_{m+n}) \\
&= \sum_{i=1}^m (-1)^{(i-1)n} f \bigg( R_0^{m; i, n+1} (y); u_1, \ldots, u_{i-1}, \nonumber \\
& \qquad \qquad \qquad \qquad \Delta \big(  R_0^{2;  1, n} R_i^{m;i, n+1} (y); g \big( R_1^{2; 1, n} R_i^{m;i, n+1} (y); u_i, \ldots, u_{i+n-1} \big), u_{i+n}   \big), u_{i+n+1}, \ldots, u_{m+n} \bigg)  \nonumber \\
& - \sum_{i=1}^m (-1)^{in} f \bigg( R_0^{m; i, n+1} (y); u_1, \ldots, u_{i-1},  \nonumber  \\
&  \qquad \qquad \qquad \qquad \Delta \big(  R_0^{2; 2,n} R_i^{m;i, n+1} (y); u_i, g \big( R_2^{2; 2,n} R_i^{m;i, n+1} (y); u_{i+1}, \ldots, u_{i+n} \big)  \big), u_{i+n+1}, \ldots, u_{m+n} \bigg)  \nonumber \\
& - (-1)^{mn} \sum_{i=1}^n (-1)^{(i-1)m} g \bigg( R_0^{n; i, m+1} (y); u_1, \ldots, u_{i-1},  \nonumber \\
& \qquad \qquad \qquad \quad \Delta \big(  R_0^{2;  1, m} R_i^{n;i, m+1} (y); f \big( R_1^{2; 1, m} R_i^{n;i, m+1} (y); u_i, \ldots, u_{i+m-1} \big), u_{i+m}   \big), u_{i+m+1}, \ldots, u_{m+n} \bigg)  \nonumber \\
& + (-1)^{mn}  \sum_{i=1}^n (-1)^{im} g \bigg( R_0^{n; i, m+1} (y); u_1, \ldots, u_{i-1},  \nonumber \\
& \qquad \qquad \qquad \quad \Delta \big(  R_0^{2; 2,m} R_i^{n;i, m+1} (y); u_i, f \big( R_1^{2; 2,m} R_i^{n;i, m+1} (y); u_{i+1}, \ldots, u_{i+m} \big)  \big), u_{i+m+1}, \ldots, u_{m+n} \bigg)  \nonumber \\
& + (-1)^{mn} \Delta \bigg( R_0^{2;1,m} R_0^{m+1;m+1, n}(y); f \big( R_1^{2;1,m} R_0^{m+1; m+1,n} (y); u_1, \ldots, u_m   \big), g \big(   R_{m+1}^{m+1; m+1, n} (y); u_{m+1}, \ldots, u_{m+n} \big)   \bigg)  \nonumber \\
& - \Delta \bigg( R_0^{2;1,n} R_0^{n+1;n+1, m}(y); g \big( R_1^{2;1,n} R_0^{n+1; n+1,m} (y); u_1, \ldots, u_n   \big), f \big(   R_{n+1}^{n+1; n+1, m} (y); u_{n+1}, \ldots, u_{m+n} \big)   \bigg),  \nonumber 
\end{align}
for $y \in Y_{m+n}$ and $u_1, \ldots, u_{m+n} \in M.$ This graded Lie bracket can be extended to the graded space $CY^\bullet (M, A) = \oplus_{n = 0}^\infty CY^n (M, A) = \oplus_{n = 0}^\infty \mathrm{Hom} ({\bf k} [Y_n] \otimes M^{\otimes n}, A)$ by the following rules
\begin{align*}
\llbracket f, a \rrbracket (y; u_1, \ldots, u_m) =~& \sum_{i=1}^m f \big( y; u_1, \ldots, u_{i-1}, a \cdot_M u_i - u_i \cdot_M a, u_{i+1}, \ldots, u_m \big)    \\
&+ f (y; u_1, \ldots, u_m) \cdot a - a \cdot f (y; u_1, \ldots, u_m), \\
\llbracket a , b \rrbracket =~& a \cdot b - b \cdot a, \text{ for } f \in CY^m (M,A),~ y \in Y_m \text{ and } a, b \in A = CY^0 (M,A).
\end{align*}

With all the above notations, we have the following interesting result.

\begin{thm}\label{mc-thm-opp}
Let $A$ be an associative algebra and $M$ be an $A$-bimodule. Then the pair $\big( CY^\bullet (M, A), \llbracket ~, ~ \rrbracket \big)$ is a graded Lie algebra. Moreover, a linear map $P: M \rightarrow A$ is a relative averaging operator if and only if $P \in CY^1(M, A)$ is a Maurer-Cartan element in the graded Lie algebra $\big( CY^\bullet (M, A), \llbracket ~, ~ \rrbracket \big)$.
\end{thm}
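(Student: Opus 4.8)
The plan is to prove the two assertions separately: first that $\big(CY^\bullet(M,A),\llbracket~,~\rrbracket\big)$ is a graded Lie algebra, and then that relative averaging operators are exactly its Maurer--Cartan elements. Both parts should exploit the fact that all the ambient structure, namely the Majumdar--Mukherjee graded Lie algebra $\mathfrak{g}$ on $D=A\oplus M$, the subspace $\mathfrak{a}$, the projection $p$, and the element $\Delta$ of \eqref{del-de}, has already been set up.

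For the graded Lie algebra claim I would invoke Kosmann--Schwarzbach's derived bracket construction \cite{yks}, whose hypotheses are: (a) $\mathfrak{a}$ is an abelian graded Lie subalgebra of $\mathfrak{g}$; (b) $\ker(p)$ is a graded Lie subalgebra; and (c) $\Delta\in\ker(p)_1$ satisfies $[\Delta,\Delta]_\mathsf{MM}=0$. Condition (c) is immediate from Proposition \ref{am-diass}, since $[\Delta,\Delta]_\mathsf{MM}=0$ is precisely the diassociative structure on $D$. For (a) and (b) I would argue by a weight count: assign weight $0$ to $A$ and weight $1$ to $M$, so that all nonzero diassociative products on $D$ (coming from the $A$-bimodule structure, with the $M$--$M$ products vanishing) are weight preserving. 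An element of $\mathfrak{a}=\oplus_n CY^{n+1}(M,A)$ is exactly the component of a cochain that reads all inputs in $M$ and outputs in $A$. In a partial composition $f\circ_i g$ of \eqref{partial-comp}, the output of $g$ is substituted into an input slot of $f$; composing two elements of $\mathfrak{a}$ therefore forces $f$ to read the $M$-component of an element of $A$, which is $0$, so $\mathfrak{a}$ is abelian. The same bookkeeping shows that if the ``all-$M$-in, $A$-out'' components of $F$ and $G$ vanish, then so does that of $[F,G]_\mathsf{MM}$, whence $\ker(p)$ is a subalgebra. Kosmann--Schwarzbach's theorem then equips $s\mathfrak{a}=\oplus_{n\geq1}CY^n(M,A)$ with the bracket $\llbracket f,g\rrbracket=(-1)^m[[\Delta,f]_\mathsf{MM},g]_\mathsf{MM}$, and it remains only to adjoin the degree-$0$ piece $CY^0(M,A)=A$ and check, using the explicit formulas given for $\llbracket f,a\rrbracket$ and $\llbracket a,b\rrbracket$, that graded antisymmetry and the graded Jacobi identity persist; this is a direct verification.

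For the Maurer--Cartan characterization I would note that $P$ is a Maurer--Cartan element exactly when $\llbracket P,P\rrbracket=0$ (the element $P$ having the appropriate degree $1$). The element $\llbracket P,P\rrbracket$ lies in $CY^2(M,A)=\mathrm{Hom}(\mathbf{k}[Y_2]\otimes M^{\otimes 2},A)$, hence is determined by its two values on the trees of $Y_2$ appearing in \eqref{del-de}. Specializing the explicit bracket formula \eqref{tr-tr} to $m=n=1$ collapses it to a few terms; the key point is that the inner occurrences of $\Delta$ are evaluated on the trees produced from $y$ by the face and $R$-maps, so that $\Delta$ contributes either the left action (the $\vdash$ product) or the right action (the $\dashv$ product) according to which tree it receives. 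I expect that $\llbracket P,P\rrbracket$ evaluated on the $\dashv$-tree yields $P(u\cdot_M P(v))-P(u)\cdot P(v)$ and on the $\vdash$-tree yields $P(P(u)\cdot_M v)-P(u)\cdot P(v)$, so that its vanishing is exactly the pair of identities \eqref{avg-iden} defining a relative averaging operator. As a consistency check, these are the same two identities that characterize $P$ in Propositions \ref{grph} and \ref{avg-nij}.

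The main obstacle is the tree bookkeeping in this last step: one must track how the compositions of face maps $d_i$ hidden inside the maps $R_0^{\,\cdot}$ and $R_i^{\,\cdot}$ act on the two elements of $Y_2$, and confirm via the rules for $\star_i^y$ which of $\dashv,\vdash$ each inner $\Delta$ selects. Once the surviving terms of \eqref{tr-tr} with $m=n=1$ are correctly identified, associativity of $\cdot$ together with the bimodule axioms reduces everything to \eqref{avg-iden}; getting the signs and the tree arguments right is the delicate part of the computation.
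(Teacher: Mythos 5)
Your proposal is correct and follows essentially the same route as the paper: the graded Lie algebra structure comes from Kosmann--Schwarzbach's derived bracket applied to the Majumdar--Mukherjee algebra on $A\oplus M$ with the Maurer--Cartan element $\Delta$ (the paper leaves the abelianity of $\mathfrak{a}$ and the subalgebra property of $\ker(p)$ as implicit, which your weight count supplies), and the Maurer--Cartan characterization is the same specialization of the bracket formula to $m=n=1$ evaluated on the two trees of $Y_2$. The only cosmetic difference is that the paper's computation produces each averaging identity with an overall factor of $2$, which does not affect the equivalence.
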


\begin{proof}
The first part follows from the previous discussion. To prove the second part, we first observe that any linear map $P: M \rightarrow A$ can be identified with an element (denoted by the same notation) $P \in CY^1(M,A)$, where $P ( \begin{tikzpicture}[scale=0.08]
\draw (-4,-2)-- (-2,0); \draw (-4,-2) -- (-4,-4); \draw (-6,0) -- (-4,-2);
\end{tikzpicture}; u)= P(u)$, for $u \in M$ and the unique tree \begin{tikzpicture}[scale=0.08]
\draw (-4,-2)-- (-2,0); \draw (-4,-2) -- (-4,-4); \draw (-6,0) -- (-4,-2);
\end{tikzpicture} $\in Y_1$. With this identification, it follows from (\ref{tr-tr}) that
\begin{align*}
\llbracket P, P \rrbracket \big( \begin{tikzpicture}[scale=0.1]
\draw (6,0) -- (8,-2);    \draw (8,-2) -- (10,0);     \draw (8,-2) -- (8,-4);     \draw (9,-1) -- ( 8,0);
\end{tikzpicture} ; u, v \big) = 2 \bigg(   P \big( \underbrace{\Delta (    \begin{tikzpicture}[scale=0.1]
\draw (6,0) -- (8,-2);    \draw (8,-2) -- (10,0);     \draw (8,-2) -- (8,-4);     \draw (9,-1) -- ( 8,0);
\end{tikzpicture} ; P(u), v)}_{= 0}  \big) +   P \big( \underbrace{ \Delta (    \begin{tikzpicture}[scale=0.1]
\draw (6,0) -- (8,-2);    \draw (8,-2) -- (10,0);     \draw (8,-2) -- (8,-4);     \draw (9,-1) -- ( 8,0);
\end{tikzpicture} ; u, P(v)) }_{= u \cdot_M P(v)} \big)  - \underbrace{\Delta \big(  \begin{tikzpicture}[scale=0.1]
\draw (6,0) -- (8,-2);    \draw (8,-2) -- (10,0);     \draw (8,-2) -- (8,-4);     \draw (9,-1) -- ( 8,0);
\end{tikzpicture} ; P(u), P(v)  \big) }_{P(u) \cdot P(v)} \bigg),\\
\llbracket P, P \rrbracket \big( \begin{tikzpicture}[scale=0.1]
\draw (6,0) -- (8,-2);    \draw (8,-2) -- (10,0);     \draw (8,-2) -- (8,-4);     \draw (7,-1) -- ( 8,0);
\end{tikzpicture} ; u, v \big) = 2 \bigg(   P \big( \underbrace{\Delta (    \begin{tikzpicture}[scale=0.1]
\draw (6,0) -- (8,-2);    \draw (8,-2) -- (10,0);     \draw (8,-2) -- (8,-4);     \draw (7,-1) -- ( 8,0);
\end{tikzpicture} ; P(u), v)}_{= P(u) \cdot_M v}  \big) +   P \big( \underbrace{ \Delta (    \begin{tikzpicture}[scale=0.1]
\draw (6,0) -- (8,-2);    \draw (8,-2) -- (10,0);     \draw (8,-2) -- (8,-4);     \draw (7,-1) -- ( 8,0);
\end{tikzpicture} ; u, P(v)) }_{= 0} \big)  - \underbrace{\Delta \big(  \begin{tikzpicture}[scale=0.1]
\draw (6,0) -- (8,-2);    \draw (8,-2) -- (10,0);     \draw (8,-2) -- (8,-4);     \draw (7,-1) -- ( 8,0);
\end{tikzpicture} ; P(u), P(v)  \big) }_{P(u) \cdot P(v)} \bigg),
\end{align*}
for $u, v \in M$. Hence $P$ is a Maurer-Cartan element (i.e. $\llbracket P, P \rrbracket = 0$) if and only if $P$ is a relative averaging operator.
\end{proof}

Let $A$ be an associative algebra and $M$ be an $A$-bimodule. In the previous theorem, we have seen that any relative averaging operator $P: M \rightarrow A$ can be considered as a Maurer-Cartan element in the graded Lie algebra $\big( CY^\bullet (M, A), \llbracket ~, ~ \rrbracket \big)$. Hence a relative averaging operator $P$ induces a differential
\begin{align*}
d_P := \llbracket P, - \rrbracket : CY^n (M, A) \rightarrow CY^{n+1}(M,A), \text{ for } n \geq 0,
\end{align*}
which makes $\{ CY^\bullet (M, A), d_P \}$ into a cochain complex. 
The corresponding cohomology is called the {\bf cohomology} of the relative averaging operator $P$, and the $n$-th cohomology group is denoted by $H^n_P (M,A)$.
Moreover, the differential $d_P$ makes the triple $\big( CY^\bullet(M,A), d_P, \llbracket ~, ~ \rrbracket \big)$ into a differential graded Lie algebra. This differential graded Lie algebra controls the deformations of the relative averaging operator $P$ (see the theorem below). For this reason, we call the differential graded Lie algebra $\{ CY^\bullet (M, A), d_P \}$ as the {\bf controlling algebra} for the operator $P$.

\begin{thm}
Let $P: M \rightarrow A$ be a relative averaging operator. For any linear map $P' : M \rightarrow A$, the sum $P + P'$ is also a relative averaging operator if and only if $P'$ is a Maurer-Cartan element in the differential graded Lie algebra $( CY^\bullet (M, A), d_P , \llbracket ~, ~ \rrbracket )$. 
%In other words,
%\begin{align*}
%\llbracket P+P', P+ P' \rrbracket ~ \Leftrightarrow ~ d_P (P') + \frac{1}{2} \llbracket P', P' \rrbracket = 0.
%\end{align*}
\end{thm}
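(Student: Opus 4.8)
The plan is to reduce the statement entirely to the Maurer-Cartan calculus of the graded Lie algebra $\big(CY^\bullet(M,A), \llbracket~,~\rrbracket\big)$ from Theorem \ref{mc-thm-opp}, so that essentially no new computation is needed. First I would record the two inputs furnished by the hypotheses. By Theorem \ref{mc-thm-opp}, the assertion that $P$ is a relative averaging operator is exactly the equation $\llbracket P, P\rrbracket = 0$; and for an arbitrary linear map $P':M\to A$, the sum $P+P'$ is a relative averaging operator precisely when $\llbracket P+P', P+P'\rrbracket = 0$, again by Theorem \ref{mc-thm-opp}. I would also note that $P$ and $P'$ both lie in $CY^1(M,A)$, the degree-$1$ component of the graded Lie algebra, hence are odd elements.

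Next I would expand the Maurer-Cartan equation for $P+P'$ by bilinearity of the bracket:
\begin{align*}
\llbracket P+P', P+P'\rrbracket = \llbracket P, P\rrbracket + \llbracket P, P'\rrbracket + \llbracket P', P\rrbracket + \llbracket P', P'\rrbracket.
\end{align*}
The key algebraic input here is the graded antisymmetry $\llbracket x, y\rrbracket = -(-1)^{|x||y|}\llbracket y, x\rrbracket$: since $P$ and $P'$ are both of degree $1$, this forces $\llbracket P, P'\rrbracket = \llbracket P', P\rrbracket$, so the two cross terms combine into $2\,\llbracket P, P'\rrbracket$. Invoking the hypothesis $\llbracket P, P\rrbracket = 0$ then reduces the right-hand side to $2\,\llbracket P, P'\rrbracket + \llbracket P', P'\rrbracket$.

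Then I would identify the cross term with the differential. By the very definition $d_P = \llbracket P, -\rrbracket$, we have $\llbracket P, P'\rrbracket = d_P(P')$, so the expansion becomes $2\,d_P(P') + \llbracket P', P'\rrbracket = 2\big(d_P(P') + \tfrac{1}{2}\llbracket P', P'\rrbracket\big)$. Working over a field of characteristic $0$, this vanishes if and only if $d_P(P') + \tfrac{1}{2}\llbracket P', P'\rrbracket = 0$, which is exactly the Maurer-Cartan equation in the differential graded Lie algebra $\big(CY^\bullet(M,A), d_P, \llbracket~,~\rrbracket\big)$. Chaining the three reductions yields the desired equivalence.

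I expect no genuine obstacle: the result is a formal consequence of the Maurer-Cartan formalism, given the identity $d_P = \llbracket P, -\rrbracket$ and the fact that $\big(CY^\bullet(M,A), d_P, \llbracket~,~\rrbracket\big)$ is a bona fide differential graded Lie algebra. The latter is already established in the text preceding the theorem, and it rests on $d_P^2 = 0$, which follows from $\llbracket P, P\rrbracket = 0$ together with the graded Jacobi identity, since for the odd element $P$ one has $d_P^2(f) = \llbracket P, \llbracket P, f\rrbracket\rrbracket = \tfrac{1}{2}\llbracket\llbracket P, P\rrbracket, f\rrbracket = 0$. The only point demanding care is the bookkeeping of signs in the graded antisymmetry and of the factor $\tfrac{1}{2}$ in the Maurer-Cartan equation; the characteristic-$0$ hypothesis renders the overall factor of $2$ harmless, so the equivalence comes out cleanly.
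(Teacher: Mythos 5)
Your proposal is correct and follows essentially the same route as the paper: expand $\llbracket P+P', P+P'\rrbracket$, drop $\llbracket P,P\rrbracket=0$, combine the cross terms via graded symmetry in degree $1$ to get $2\,d_P(P')+\llbracket P',P'\rrbracket$, and divide by $2$ in characteristic $0$ to recover the Maurer--Cartan equation. The paper states the same reduction slightly more tersely, leaving the symmetry of the cross terms implicit.
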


\begin{proof}
Note that the sum $P+P'$ is a relative averaging operator if and only if $\llbracket P+ P', P+ P' \rrbracket = 0$, equivalently,
\begin{align*}
\llbracket P,P' \rrbracket + \llbracket P',P \rrbracket + \llbracket P',P' \rrbracket = 0.
\end{align*}
This holds if and only if $d_P (P') + \frac{1}{2} \llbracket P', P' \rrbracket = 0$, which is equivalent to the fact that $P'$ is a Maurer-Cartan element in the differential graded Lie algebra $( CY^\bullet (M, A), d_P , \llbracket ~, ~ \rrbracket )$. 
\end{proof}

In the next, we show that the cohomology of a relative averaging operator $P: M \rightarrow A$ can be seen as the cohomology of the induced diassociative algebra $M_P$ with coefficients in a suitable representation on $A$. We start with the following result which is a particular case of Proposition \ref{aa-b}.

\begin{prop}
Let $P: M \rightarrow A$ be a relative averaging operator. Then there is a representation of the induced diassociative algebra $M_P$ on the vector space $A$ with the action maps
\begin{align*}
&\dashv : M_P \times A \rightarrow A, \quad u \dashv a = P(u) \cdot a - P (u \cdot_M a), \\
&\vdash : M_P \times A \rightarrow A, \quad u \vdash a = P(u) \cdot a, \\
&\dashv : A \times M_P \rightarrow A, \quad a \dashv u = a \cdot P(u), \\
&\vdash : A \times M_P \rightarrow A, \quad a \vdash u = a \cdot P(u) - P (a \cdot_M u).
\end{align*}
\end{prop}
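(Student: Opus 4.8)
The plan is to obtain this proposition as a direct specialization of Proposition \ref{aa-b} to the adjoint bimodule described in Example \ref{adj-bimod}. Recall that the relative averaging algebra $M \xrightarrow{P} A$ is itself a bimodule over $M \xrightarrow{P} A$: in the notation of Proposition \ref{aa-b} one takes the bimodule $(N \xrightarrow{Q} B, l, r) = (M \xrightarrow{P} A, l_\mathrm{ad}, r_\mathrm{ad})$, so that $B = A$, $N = M$, $Q = P$, and the pairing maps are the given right and left $A$-actions on $M$, namely $l_\mathrm{ad}(u, a) = u \cdot_M a$ and $r_\mathrm{ad}(a, u) = a \cdot_M u$ for $u \in M$, $a \in A$.

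First I would invoke Example \ref{adj-bimod} to guarantee that this tuple genuinely is a bimodule over $M \xrightarrow{P} A$, so that Proposition \ref{aa-b} applies and produces a representation of the induced diassociative algebra $M_P$ on the vector space $B = A$. Then I would substitute the above identifications into the four action maps (\ref{nij-act-di}), using $\cdot_B = \cdot$ (the multiplication of $A$), $Q = P$, $l = l_\mathrm{ad}$, and $r = r_\mathrm{ad}$. This turns $u \dashv b = P(u) \cdot_B b - Q(l(u,b))$ into $u \dashv a = P(u) \cdot a - P(u \cdot_M a)$, turns $u \vdash b = P(u) \cdot_B b$ into $u \vdash a = P(u) \cdot a$, turns $b \dashv u = b \cdot_B P(u)$ into $a \dashv u = a \cdot P(u)$, and turns $b \vdash u = b \cdot_B P(u) - Q(r(b,u))$ into $a \vdash u = a \cdot P(u) - P(a \cdot_M u)$. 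These are exactly the claimed action maps.

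Since Proposition \ref{aa-b} already furnishes the full collection of diassociative-representation identities, no further computation is required; the content is entirely the recognition that the present data is the adjoint specialization of the general bimodule. There is essentially no obstacle here beyond bookkeeping: the only point demanding care is the orientation convention of the pairing maps, i.e.\ that $l_\mathrm{ad}$ is the right action and $r_\mathrm{ad}$ is the left action of $A$ on $M$, so that the subtracted correction terms $P(u \cdot_M a)$ and $P(a \cdot_M u)$ land in the $\dashv$ from the left and the $\vdash$ from the right, respectively, matching the asserted formulas.
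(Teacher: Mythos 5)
Your proposal is correct and is exactly the paper's own route: the paper introduces this proposition with the sentence that it ``is a particular case of Proposition \ref{aa-b}'' and gives no further argument, so your specialization to the adjoint bimodule of Example \ref{adj-bimod} (with $B=A$, $N=M$, $Q=P$, $l=l_{\mathrm{ad}}$, $r=r_{\mathrm{ad}}$) supplies precisely the bookkeeping the paper leaves implicit, including the correct orientation of the pairing maps.
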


It follows from the above proposition that one may define the cohomology of the induced diassociative algebra $M_P$ with coefficients in the above representation on $A$. More precisely, we consider the cochain complex $\{ CY^\bullet (M_P, A), \delta_\mathrm{Diass}^P \}$, where $CY^n (M_P, A) := \mathrm{Hom} ({\bf k}[Y_n] \otimes M^{\otimes n}, A)$ for $n \geq 0$, and the coboundary map $\delta_\mathrm{Diass}^P : CY^n (M_P, A) \rightarrow CY^{n+1} (M_P, A)$ given by
\begin{align*}
\delta_\mathrm{Diass}^P (f) & (y; u_1, \ldots, u_{n+1}) = u_1 \star_0^y f(d_0 y ; u_2, \ldots, u_{n+1}) \\
& + \sum_{i=1}^n (-1)^i ~ f (d_i y ; u_1, \ldots, u_i (\star_i^y)_P u_{i+1}, \ldots, u_{n+1}) + (-1)^{n+1}~ f(d_{n+1} y; u_1, \ldots u_n) \star_{n+1}^y u_{n+1},
\end{align*}
for $f \in CY^n (M_P, A)$, $y \in Y_{n+1}$ and $u_1, \ldots, u_{n+1} \in M$. Here $(\star_i^y)_P$ represents the product $\dashv_P$ or $\vdash_P$ accordingly when $\star_i^y$ is given by $\dashv$ or $\vdash$. We denote the $n$-th cohomology group of cochain complex $\{ CY^\bullet (M_P, A), \delta_\mathrm{Diass}^P \}$ by $H^n_\mathrm{Diass}(M_P, A)$.
%corresponding cohomology groups are  denoted by $H^\bullet_\mathrm{diass} (M_P, A)$.

\begin{prop}\label{n-mult}
Let $P: M \rightarrow A$ be a relative averaging operator. Then the coboundary operators $d_P$ and $\delta_\mathrm{Diass}^P$ are related by
\begin{align*}
d_P (f) = (-1)^n ~\delta_\mathrm{Diass}^P (f) , \text{ for } f \in CY^n (M, A).
\end{align*}
\end{prop}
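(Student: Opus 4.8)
The plan is to compute the left-hand side $d_P(f) = \llbracket P, f \rrbracket$ directly from the explicit bracket formula (\ref{tr-tr}), specialised to the case where the first entry is $P \in CY^1(M,A)$ (so the integer $m$ there equals $1$) and the second entry is $f \in CY^n(M,A)$. When $m = 1$ the four summations whose index runs up to $m$ each collapse to a single term, and $\llbracket P, f\rrbracket$ reduces to exactly six contributions: two of the form $P\big(\Delta(\,\cdot\,,\,\cdot\,)\big)$ in which $f$ sits in one slot of an inner $\Delta$ and a free input in the other; two of the form $f\big(\ldots,\Delta(\,\cdot\,,\,\cdot\,),\ldots\big)$ in which $P$ applied to a single input sits in one slot of the inner $\Delta$; and the two ``outer'' terms $\pm\Delta\big(P(\,\cdot\,),f(\ldots)\big)$ and $\pm\Delta\big(f(\ldots),P(\,\cdot\,)\big)$. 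I would write all six out with their signs and tree arguments.

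The next step is to evaluate every inner $\Delta$ using (\ref{del-de}) and the fact that $P$ lands in $A$ while the inputs $u_i$ lie in $M$. Reading off from (\ref{del-de}), for $a \in A$ and $u \in M$ one has $\Delta(\vdash; a, u) = a \cdot_M u$ and $\Delta(\dashv; u, a) = u \cdot_M a$, while $\Delta(\dashv; a, u) = 0$ and $\Delta(\vdash; u, a) = 0$; and for two $A$-arguments both orientations return the product of $A$. Substituting these, I expect the following term-by-term identification with the three blocks of $\delta_\mathrm{Diass}^P(f)$. The two outer terms yield the principal parts $P(u_1)\cdot f(\ldots)$ and $f(\ldots)\cdot P(u_{n+1})$ of the boundary actions $u_1 \star_0^y f$ and $f \star_{n+1}^y u_{n+1}$ on $A$. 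The two terms of the form $P(\Delta(\ldots))$ produce precisely the correction terms $-P\big(u_1 \cdot_M f(\ldots)\big)$ and $+P\big(f(\ldots)\cdot_M u_{n+1}\big)$ that distinguish $\dashv$ from $\vdash$ in the representation of $M_P$ on $A$, and they are switched on exactly when the relevant inner tree is $\dashv$- (respectively $\vdash$-) oriented. Finally the two terms $f(\ldots,\Delta(P(u_i),u_{i+1}),\ldots)$ and $f(\ldots,\Delta(u_i,P(u_{i+1})),\ldots)$ supply the interior sum $\sum_{i=1}^n (-1)^i f(\ldots; u_i (\star_i^y)_P u_{i+1},\ldots)$, the first producing the $\vdash_P$-contributions (via $\Delta(\vdash; P(u_i),u_{i+1}) = u_i \vdash_P u_{i+1}$) and the second the $\dashv_P$-contributions (via $\Delta(\dashv; u_i,P(u_{i+1})) = u_i \dashv_P u_{i+1}$). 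A short bookkeeping of the prefactors $(-1)^{(i-1)n}$, $(-1)^{in}$ and $(-1)^{mn}$ then extracts the global sign $(-1)^n$; I have checked in the base case $n=0$ that this reproduces the extension rule for $\llbracket P, a\rrbracket$ and matches $\delta_\mathrm{Diass}^P(a)$ on the unique tree of $Y_1$.

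The hard part is the planar-tree combinatorics underlying this identification, and it is where I expect the real work to lie. I must show that the trees fed into $f$ through the maps $R_0^{\bullet;\bullet,\bullet}$ and $R_i^{\bullet;\bullet,\bullet}$ of (\ref{r0-maps})--(\ref{ri-maps}) collapse to the face reductions $d_0 y$, $d_i y$, $d_{n+1} y$ occurring in $\delta_\mathrm{Diass}^P$. For the interior terms, for example, the composite in (\ref{r0-maps}) with the appropriate indices gives $R_0^{n;i,2}(y) = \widehat{d_0}\cdots\widehat{d_{i-1}}\, d_i\, \widehat{d_{i+1}}\cdots\widehat{d_{n+1}}(y) = d_i y$, since every face map but $d_i$ is deleted; similar computations isolate $d_0 y$ from $R_2^{2;2,n}(y)$ and $d_{n+1} y$ from $R_1^{2;1,n}R_0^{n+1;n+1,1}(y)$. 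The more delicate point is orientation: I must verify that the two-vertex tree extracted for each inner $\Delta$ is $\dashv$- or $\vdash$-oriented exactly according to the value of $\star_0^y$, $\star_i^y$ or $\star_{n+1}^y$. This amounts to unwinding the definitions of $\star_0$, $\star_i$, $\star_{n+1}$ in terms of the shapes $|\vee y_1$, $y_1 \vee |$ and the orientation of the $i$-th leaf of $y$, and checking that the surviving face maps in the relevant $R$-composite record precisely that leaf. Once these identifications are in hand, collecting the six contributions and comparing with $\delta_\mathrm{Diass}^P(f)$ completes the proof.
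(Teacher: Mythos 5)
Your proposal is correct and follows essentially the same route as the paper's proof: specialise the explicit bracket formula (\ref{tr-tr}) to $m=1$, evaluate the inner $\Delta$'s via (\ref{del-de}) so that the vanishing of $\Delta(\dashv;a,u)$ and $\Delta(\vdash;u,a)$ automatically selects the correct correction terms and the correct $\dashv_P$/$\vdash_P$ products, identify the $R$-composites with the face maps $d_0, d_i, d_{n+1}$, and extract the global sign $(-1)^n$. The paper performs exactly this computation (noting in particular $R_2^{2;2,n}(y)=d_0(y)$ and $R_1^{2;1,n}(y)=d_{n+1}(y)$), so no further comparison is needed.
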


\begin{proof}
For any $y \in Y_{n+1}$ and $u_1, \ldots, u_{n+1} \in M$, we have
\begin{align*}
    &\big(  d_P (f) \big) (y; u_1, \ldots, u_{n+1}) \\
    &= \llbracket P, f \rrbracket (y; u_1, \ldots, u_{n+1}) \\
   & = P \big(     \Delta \big(  R_0^{2;1,n}(y); f \big(  R_1^{2;1,n} (y); u_1, \ldots, u_n   \big), u_{n+1}  \big)    \big) \\
   & \qquad - (-1)^n P \big( \Delta \big(  R_0^{2;2,n} (y) ;  u_1, f \big( R_2^{2;2,n} (y); u_2, \ldots, u_{n+1} \big)
    \big)   \big) \\
   & \qquad - (-1)^n \sum_{i=1}^n (-1)^{i-1} f \big(  R_0^{n;i, 2} (y) ; u_1, \ldots, u_{i-1}, \Delta \big(  R_{i}^{n;i, 2} (y); P(u_i), u_{i+1}  \big), u_{i+2}, \ldots, u_{n+1}  \big) \\
  & \qquad  + (-1)^n \sum_{i=1}^n (-1)^i f \big(  R_0^{n;i, 2} (y) ; u_1, \ldots, u_{i-1}, \Delta \big(  R_i^{n;i,2} (y); u_i, P(u_{i+1}) \big), u_{i+2}, \ldots, u_{n+1} \big) \\
  & \qquad  + (-1)^n \Delta \big(  R_0^{2;2,n} (y) ; P(u_1), f \big(  R_2^{2;2,n} (y), u_2, \ldots, u_{n+1} \big)   \big) \\
 & \qquad - \Delta \big(  R_0^{2;1,n} (y); f \big(  R_1^{2;1,n} (y); u_1, \ldots, u_n \big), P(u_{n+1})  \big)  \\
   & = (-1)^n \bigg\{ \Delta \big(  R_0^{2;2,n} (y) ; P(u_1), f \big(  R_2^{2;2,n} (y), u_2, \ldots, u_{n+1} \big)   \big) \\
   & \qquad \qquad -  P \big( \Delta \big(  R_0^{2;2,n} (y) ;  u_1, f \big( R_2^{2;2,n} (y); u_2, \ldots, u_{n+1} \big)
    \big)   \big)  \\
&  +  \sum_{i=1}^n (-1)^i f \big(  R_0^{n;i, 2} (y) ; u_1, \ldots, u_{i-1}, \underbrace{\Delta \big(  R_{i}^{n;i, 2} (y); P(u_i), u_{i+1} \big) +  \Delta \big(  R_i^{n;i,2} (y); u_i, P(u_{i+1}) \big)}_{ =  u_i (\star_i^y)_P u_{i+1}}    , u_{i+2}, \ldots, u_{n+1}  \big) \\
 &  \qquad \qquad  +(-1)^{n+1} \Delta \big(  R_0^{2;1,n} (y); f \big(  R_1^{2;1,n} (y); u_1, \ldots, u_n \big), P(u_{n+1})  \big) \\
 & \qquad \qquad - (-1)^{n+1} P \big(     \Delta \big(  R_0^{2;1,n}(y); f \big(  R_1^{2;1,n} (y); u_1, \ldots, u_n   \big), u_{n+1}  \big)    \big)  \bigg\} \\
 &   = (-1)^n \bigg\{  u_1 \star_0^y f (d_0(y); u_2, \ldots, u_{n+1}) + \sum_{i=1}^n (-1)^i f (R_0^{n;i,2} (y); u_1, \ldots, u_{i-1} , u_i (\star_i^y)_P u_{i+1}, \ldots, u_{n+1}) \\
  & \qquad \quad + (-1)^{n+1}  f(d_{n+1}(y); u_1, \ldots, u_n) \star_{n+1}^y u_{n+1}
\bigg\}  \quad  (\text{as } R_2^{2;2,n} (y) = d_0 (y) \text{ and } R_1^{2;1,n} (y) = d_{n+1}(y)) \\
& =(-1)^{n} (\delta_\mathrm{Diass}^P (f) ) (u_1, \ldots, u_{n+1}).
\end{align*}
This completes the proof.
\end{proof}

It follows from the above proposition that the cohomology of a relative averaging operator $P$ is isomorphic to the cohomology of the diassociative algebra $M_P$ with coefficients in the representation $A.$ That is,
\begin{align*}
H^\bullet_P (M,A) \cong H^\bullet_{\mathrm{Diass}} (M_P, A).
\end{align*}

\begin{remark}
    The cohomology of a relative averaging operator $P$ is useful to study deformations of the operator $P$ by keeping the underlying algebra and bimodule intact. See \cite{das-rota} for the similar deformation theory of relative Rota-Baxter operators.
\end{remark}

\medskip

\subsection*{Cohomological relation with diassociative algebras}

Let $P: M \rightarrow A$ be a relative averaging operator. In the following, we find the relation between the cohomology of the relative averaging operator $P$ and the cohomology of the induced diassociative algebra $M_P$ with coefficients in the adjoint representation. To do this, we define a collection $\{ \Theta_n \}_{n=0}^\infty$ of maps
\begin{align*}
\Theta_n : CY^n (M, A) \rightarrow CY^{n+1}(M_P, M_p)
\end{align*}
by
\begin{align*}
\Theta_n (f) (y; u_1, \ldots, u_{n+1}) = \begin{cases}
(-1)^{n+1} ~ u_1 \cdot_M f (y_1; u_2, \ldots, u_{n+1}) ~~~ & \text{ if } y = | \vee y_1 \text{ for some } n\text{-tree } y_1, \\
f (y_1; u_1, \ldots, u_n) \cdot_M u_{n+1} ~~~ & \text{ if } y =  y_1 \vee | \text{ for some } n\text{-tree } y_1, \\
0 ~~~ & \text{ otherwise},
\end{cases}
\end{align*}
for $u_1, \ldots, u_{n+1} \in M.$ Then we have the following.

\begin{lemma}
For $f \in CY^m (M, A)$ and $g \in CY^n (M, A)$, we have
\begin{align*}
[\Theta_m (f), \Theta_n (g)]_\mathsf{MM} = \Theta_{m+n} \big( \llbracket f,g \rrbracket \big). 
\end{align*}
In other words, the collection $\{ \Theta_n \}_{n=0}^\infty$ defines a morphism of graded Lie algebras from $(CY^\bullet (M,A), \llbracket ~, ~ \rrbracket)$ to $(CY^{\bullet +1} (M_P, M_P), [~,~]_\mathsf{MM})$.
\end{lemma}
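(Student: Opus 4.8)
The plan is to deduce the identity from the single structural relation $\Theta_k(h) = \rho\big([\Delta, h]_\mathsf{MM}\big)$ for $h \in CY^k(M,A)$, where $\rho$ denotes the map that restricts a cochain in $CY^{\bullet+1}(D,D)$ to inputs taken from the summand $M \subseteq D = A \oplus M$ and projects its output onto $M$; note that the target of $\rho$ is precisely $CY^{\bullet+1}(M_P,M_P)$. Granting this relation, the Lemma becomes a formal consequence of the definition $\llbracket f,g\rrbracket = (-1)^m[[\Delta,f]_\mathsf{MM}, g]_\mathsf{MM}$ and the graded Jacobi identity for $[~,~]_\mathsf{MM}$.

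First I would prove the relation $\Theta_m(f) = \rho([\Delta,f]_\mathsf{MM})$. Expanding $[\Delta,f]_\mathsf{MM}$ through (\ref{mm-circ}) produces $\Delta \circ_1 f + (-1)^{m-1}\Delta \circ_2 f - (-1)^{m-1}\sum_{i}(-1)^{i-1} f\circ_i \Delta$. Since $\rho$ evaluates on inputs from $M$ and, by the formulas of Proposition \ref{am-diass}, both operations annihilate $M\otimes M$, i.e. $(0,u)\dashv(0,v)=(0,u)\vdash(0,v)=(0,0)$, every term $f\circ_i\Delta$ drops out. For the two surviving terms I would evaluate the relevant composite face maps of (\ref{partial-comp}): inspecting $R_0^{2;1,m}$ and $R_1^{2;1,m}$ shows that $\Delta\circ_1 f$ is supported exactly on the trees $y = y_1 \vee |$, where the outer product is forced to be $\vdash$ and the value is $f(y_1; u_1,\ldots,u_m)\cdot_M u_{m+1}$; symmetrically, $R_0^{2;2,m}$ and $R_2^{2;2,m}$ show that $\Delta\circ_2 f$ is supported on $y = | \vee y_1$, with outer product $\dashv$ and value $u_1 \cdot_M f(y_1; u_2,\ldots,u_{m+1})$. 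As $(-1)^{m-1}=(-1)^{m+1}$, these two contributions reproduce the two nonzero cases in the definition of $\Theta_m(f)$ verbatim, while all other trees give zero on both sides.

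Next I would verify that $\rho$ intertwines the two Majumdar--Mukherjee brackets when applied to cochains of the form $[\Delta, h]_\mathsf{MM}$ with $h \in CY^\bullet(M,A)$. The key point, visible from the two surviving terms above, is that $[\Delta, h]_\mathsf{MM}$ sends all-$M$ inputs into $M$. Hence in any partial composition $[\Delta,f]_\mathsf{MM}\circ_i[\Delta,g]_\mathsf{MM}$ evaluated on $M$-inputs, the output of the inner cochain already lies in $M$, so every intermediate value remains inside $M$ and $\circ_i$ commutes with $\rho$. This gives $\rho\big([[\Delta,f]_\mathsf{MM},[\Delta,g]_\mathsf{MM}]_\mathsf{MM}\big) = [\rho([\Delta,f]_\mathsf{MM}),\rho([\Delta,g]_\mathsf{MM})]_\mathsf{MM} = [\Theta_m(f),\Theta_n(g)]_\mathsf{MM}$, the last equality by the relation of the previous paragraph.

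Finally I would close the loop with graded Jacobi. Because $\Delta$ is a Maurer--Cartan element, $[\Delta,[\Delta,f]_\mathsf{MM}]_\mathsf{MM} = \tfrac12[[\Delta,\Delta]_\mathsf{MM},f]_\mathsf{MM}=0$, so the Jacobi identity applied to the triple $\Delta, [\Delta,f]_\mathsf{MM}, g$, together with $\llbracket f,g\rrbracket = (-1)^m[[\Delta,f]_\mathsf{MM},g]_\mathsf{MM}$, yields $[[\Delta,f]_\mathsf{MM},[\Delta,g]_\mathsf{MM}]_\mathsf{MM} = [\Delta,\llbracket f,g\rrbracket]_\mathsf{MM}$ (the two signs $(-1)^m$ multiply to $+1$). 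Applying $\rho$ to this identity and invoking $\Theta_{m+n}(\llbracket f,g\rrbracket) = \rho([\Delta,\llbracket f,g\rrbracket]_\mathsf{MM})$ gives $\rho\big([[\Delta,f]_\mathsf{MM},[\Delta,g]_\mathsf{MM}]_\mathsf{MM}\big) = \Theta_{m+n}(\llbracket f,g\rrbracket)$, which combined with the previous paragraph is exactly the assertion. The main obstacle is the tree-and-sign bookkeeping of the first step: one must evaluate the composite face maps $R_0, R_1, R_2$ carefully enough to pin down both the tree-support and the orientation of the outer $\Delta$, and then match the resulting signs against the piecewise definition of $\Theta_m$. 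Once that is settled, the remaining steps are purely formal manipulations in the graded Lie algebra.
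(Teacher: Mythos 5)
Your proposal is correct, but it takes a genuinely different route from the paper. The paper proves the lemma by brute force: it expands $[\Theta_m(f),\Theta_n(g)]_\mathsf{MM}$ through the partial compositions (\ref{partial-comp}) and checks the three cases $y=|\vee y_1$, $y=y_1\vee |$, and all remaining trees, matching the surviving terms against $\Theta_{m+n}(\llbracket f,g\rrbracket)$ directly. You instead factor the whole statement through the single identity $\Theta_k(h)=\rho([\Delta,h]_\mathsf{MM})$, after which the lemma follows from graded Jacobi, $[\Delta,\Delta]_\mathsf{MM}=0$, and the observation that $\rho$ commutes with partial compositions on cochains that send all-$M$ inputs into $M$. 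I checked the ingredients: the terms $f\circ_i\Delta$ do vanish on $M\otimes M$ since $(0,u)\dashv(0,v)=(0,u)\vdash(0,v)=0$; the supports of $\Delta\circ_1 f$ and $\Delta\circ_2 f$ are exactly the trees $y_1\vee |$ and $|\vee y_1$ respectively (removing the middle leaves of any other grafting $y_1\vee y_2$ yields the tree on which $\Delta$ evaluates $(a,0)$ against $(0,u)$ as $\dashv$, giving zero), and the sign $(-1)^{m-1}=(-1)^{m+1}$ matches the piecewise definition of $\Theta_m$; the $A$-component of $[\Delta,f]_\mathsf{MM}$ vanishes on all-$M$ inputs, which is what makes $\rho$ multiplicative in your second step; and the two factors of $(-1)^m$ do cancel in the Jacobi computation, giving $[[\Delta,f]_\mathsf{MM},[\Delta,g]_\mathsf{MM}]_\mathsf{MM}=[\Delta,\llbracket f,g\rrbracket]_\mathsf{MM}$. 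What your approach buys is a conceptual explanation of the lemma ($\Theta$ is the restriction of $\mathrm{ad}_\Delta$ to the abelian subalgebra, so it must intertwine the derived bracket with the ambient one) and it replaces the paper's heavy case analysis of $[\Theta_m(f),\Theta_n(g)]$ by a lighter one for $[\Delta,f]$ alone; what the paper's direct computation buys is that it never needs to verify that $\rho$ is compatible with $\circ_i$, which is the one place where your argument relies on an unstated but true support property of $[\Delta,f]_\mathsf{MM}$.
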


\begin{proof}
Let $y \in Y_{m+n+1}$ be an $(m+n+1)$-tree and $u_0 , u_1, \ldots, \ldots, u_{m+n}$ be elements of $M$. If $y = | \vee y_1$, for some $(m+n)$-tree $y_1$, then
\begin{align*}
&[\Theta_m (P), \Theta_n (Q)]_\mathsf{MM} \big(  y; u_0, u_1, \ldots, u_{m+n} \big) \\
&= \bigg( \sum_{i=1}^{m+1} (-1)^{(i-1)n}~ \Theta_m (P) \circ_i \Theta_n (Q) - (-1)^{mn}  \sum_{i=1}^{n+1} (-1)^{(i-1)m}~ \Theta_n (Q) \circ_i \Theta_m (P) \bigg) \big(  y; u_0, u_1, \ldots, u_{m+n} \big) \\
&= \Theta_m (P) \big(  R_0^{m+1; 1, n+1} (y); \Theta_n (Q) \big(      R_1^{m+1;1,n+1} (y); u_0, \ldots, u_n \big), u_{n+1}, \ldots, u_{m+n}   \big) \\ 
& \Scale[0.95]{  + \sum_{i=1}^m (-1)^{in} ~ \Theta_m (P) \big(  R_0^{m+1; i+1, n+1} (y); u_0, \ldots, u_{i-1}, \Theta_n (Q) \big(  R_{i+1}^{m+1; i+1, n+1} (y); u_i, \ldots, u_{i+n}  \big), u_{i+n+1}, \ldots, u_{m+n}   \big)} \\
&- (-1)^{mn} \bigg\{   \Theta_n (Q) \big(  R_0^{n+1; 1, m+1} (y); \Theta_m (P) \big(      R_1^{n+1;1,m+1} (y); u_0, \ldots, u_m \big), u_{m+1}, \ldots, u_{m+n}   \big) \\
& \Scale[0.92]{ + \sum_{i=1}^n (-1)^{im} ~ \Theta_n (Q) \big(  R_0^{n+1; i+1, m+1} (y); u_o, \ldots, u_{i-1}, \Theta_m (P) \big(  R_{i+1}^{n+1; i+1, m+1} (y); u_i, \ldots, u_{i+m}  \big), u_{i+m+1}, \ldots, u_{m+n}   \big)  \bigg\} } \\
&= (-1)^{m+n+1} ~ u_0 \cdot_M \big( \llbracket P, Q \rrbracket (y_1; u_1, \ldots, u_{m+n}) \big) = \big( \Theta_{m+n} \llbracket P, Q \rrbracket  \big) (y; u_0, u_1, \ldots, u_{m+n}).
\end{align*}
On the other hand, if $y = y_1 \vee |$, for some $(m+n)$-tree $y_1$, and $u_1, \ldots, u_{m+n+1}$ are elements of $M$, then
\begin{align*}
&[ \Theta_m (P), \Theta_n (Q) ]_\mathsf{MM} (y; u_1, u_2, \ldots, u_{m+n+1}) \\
&= \bigg( \sum_{i=1}^{m+1} (-1)^{(i-1)n}~ \Theta_m (P) \circ_i \Theta_n (Q) - (-1)^{mn}  \sum_{i=1}^{n+1} (-1)^{(i-1)m}~ \Theta_n (Q) \circ_i \Theta_m (P) \bigg) \big(  y; u_0, u_1, \ldots, u_{m+n} \big) \\
&= \sum_{i=1}^m (-1)^{(i-1) n}~ \Theta_m (P) \big( R_0^{m+1; i, n+1} (y) ;  u_1, \ldots, u_{i-1} , \Theta_n (Q) \big( R_i^{m+1; i, n+1} (y) ; u_i, \ldots, u_{i+n} \big) , \ldots, u_{m+n+1} \big) \\
&+ (-1)^{mn} \Theta_m (P) \big(  R_0^{m+1; m+1, n+1} (y); u_1, \ldots, u_{m}, \Theta_n (Q) \big( R_{m+1}^{m+1; m+1, n+1} (y); u_{m+1}, \ldots, u_{m+n+1} \big) \big) \\
& \Scale[0.9]{ - (-1)^{mn} \bigg\{    
\sum_{i=1}^n (-1)^{(i-1) m}~ \Theta_n (Q) \big( R_0^{n+1; i, m+1} (y) ;  u_1, \ldots, u_{i-1} , \Theta_m (P) \big( R_i^{n+1; i, m+1} (y) ; u_i, \ldots, u_{i+m} \big) , \ldots, u_{m+n+1} \big) } \\
&+ (-1)^{mn} \Theta_n (Q) \big(  R_0^{n+1; n+1, m+1} (y); u_1, \ldots, u_{n}, \Theta_m (P) \big( R_{n+1}^{n+1; n+1, m+1} (y); u_{n+1}, \ldots, u_{m+n+1} \big) \big) \bigg\} \\
&= \big( \llbracket P, Q \rrbracket (y; u_1, \ldots, u_{m+n})  \big) \cdot_M u_{m+n+1} = \big( \Theta_{m+n} \llbracket P, Q \rrbracket  \big) (y; u_1, \ldots, u_{m+n+1}).
\end{align*}
Finally, for any other $y$'s in $Y_{m+n+1}$ (that are not of the form $| \vee y_1$ or $y_1 \vee |$), one can easily verify from the partial compositions (\ref{partial-comp}) that
\begin{align*}
[\Theta_m (P) , \Theta_n (Q) ] (y; u_0, u_1, \ldots, u_{m+n}) = 0 = (\Theta_{m+n} \llbracket P, Q \rrbracket) (y; u_0, u_1, \ldots, u_{m+n}).
\end{align*}
This concludes the proof.
\end{proof}

Let $\pi_P \in CY^2 (M_P, M_P)$ be the Maurer-Cartan element corresponding to the induced diassociative algebra $M_P$. In other words, $\pi_P$ is given by

\medskip

\begin{center}
$\pi_P \big($ \begin{tikzpicture}[scale=0.15]
\draw (6,0) -- (8,-2);  \draw (8,-2) -- (10,0); \draw (8,-2) -- (8,-4);   \draw (9,-1) -- ( 8,0);
\end{tikzpicture} $;~ u, v \big) = u \dashv_P v $ \quad and \quad $ \pi_P \big($ \begin{tikzpicture}[scale=0.15]
\draw (0,0) -- (2,-2);  \draw (2,-2) -- (4,0);  \draw (2,-2) -- (2,-4);    \draw (1,-1) -- (2,0);   
\end{tikzpicture} $;~ u, v \big) = u \vdash_P v,$ for $u, v \in M.$ 
\end{center}

\medskip

\medskip

\noindent Then it follows from the above lemma that the following diagram commutes
\begin{align*}
\xymatrix{
 CY^n(M, A) \ar[r]^{\Theta_n} \ar[d]_{\delta^P_\mathrm{Diass} = (-1)^n ~ \! d_P} & CY^{n+1} (M_P, M_P) \ar[d]^{\delta_\mathrm{Diass} = (-1)^n~ \! [\pi_P, ~]_\mathsf{MM}} \\
 CY^{n+1} (M, A) \ar[r]_{\Theta_{n+1}} & CY^{n+2} (M_P, M_P).
 }
\end{align*}

As a consequence, we get the following result.

\begin{thm}\label{n-func}
Let $P: M \rightarrow A$ be a relative averaging operator. Then there is a morphism 
\begin{align*}
\Theta_\bullet : {H^\bullet_P (M, A)} \rightarrow H^{\bullet +1}_\mathrm{Diass} (M_P, M_P)
\end{align*}
from the cohomology of the relative averaging operator $P$ to the cohomology of the induced diassociative algebra $M_P$ with coefficients in the adjoint representation.
\end{thm}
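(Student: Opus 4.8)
The plan is to deduce Theorem \ref{n-func} formally from the fact that the family $\{\Theta_n\}_{n=0}^\infty$ is a cochain map between the two complexes involved; once that is in place, the induced morphism on cohomology is automatic by standard homological algebra. All the genuine computational content has already been absorbed into the preceding Lemma (the graded Lie algebra morphism property), so what remains is to convert that property into a chain-map property and to keep the signs straight.

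The first step I would carry out is to pin down the Maurer-Cartan element governing the target complex, namely to check that $\Theta_1(P) = \pi_P$. Evaluating $\Theta_1(P)$ on the two trees in $Y_2$ gives $\Theta_1(P)(y;u,v) = u \cdot_M P(v) = u \dashv_P v$ when $y = |\vee y_1$ and $\Theta_1(P)(y;u,v) = P(u)\cdot_M v = u \vdash_P v$ when $y = y_1 \vee |$, which is exactly the defining data of $\pi_P$. This identification is the bridge that lets me feed $P$ into the bracket formula of the Lemma.

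Next I would assemble the signs. For $f \in CY^n(M,A)$, the Maurer-Cartan element $\Theta_n(f)$ lives in degree $n+1$, so the Remark in Section \ref{sec-2} gives $\delta_\mathrm{Diass}(\Theta_n(f)) = (-1)^n [\pi_P, \Theta_n(f)]_\mathsf{MM}$. Substituting $\pi_P = \Theta_1(P)$ and applying the preceding Lemma yields $(-1)^n [\Theta_1(P), \Theta_n(f)]_\mathsf{MM} = (-1)^n \Theta_{n+1}(\llbracket P, f\rrbracket) = (-1)^n \Theta_{n+1}(d_P f)$, and Proposition \ref{n-mult} rewrites $(-1)^n d_P f = \delta^P_\mathrm{Diass} f$. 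Hence $\delta_\mathrm{Diass}\circ \Theta_n = \Theta_{n+1}\circ \delta^P_\mathrm{Diass}$, which is precisely the commutativity of the square displayed just before the theorem.

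With that square commuting, $\Theta_\bullet$ sends cocycles to cocycles and coboundaries to coboundaries and therefore descends to a well-defined map $H^\bullet_P(M,A) \to H^{\bullet+1}_\mathrm{Diass}(M_P, M_P)$, where I use that $H^\bullet_P(M,A)$ may be computed from either $d_P$ or $\delta^P_\mathrm{Diass}$ since they differ only by the invertible scalar $(-1)^n$ in each degree. I do not anticipate a real obstacle here, precisely because the delicate partial-composition and tree bookkeeping were already discharged in the Lemma; the only points demanding care are the sign tracking in the chain of identities above and the verification $\Theta_1(P) = \pi_P$, both of which are short once the tree conventions distinguishing $\dashv$ from $\vdash$ are fixed.
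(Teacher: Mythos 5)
Your proposal is correct and takes essentially the same route as the paper: the paper likewise deduces the theorem from the graded Lie algebra morphism property of $\{\Theta_n\}$ via the commuting square $\delta_\mathrm{Diass}\circ\Theta_n=\Theta_{n+1}\circ\delta^P_\mathrm{Diass}$, which it asserts without further argument. Your explicit check that $\Theta_1(P)=\pi_P$ and the sign bookkeeping through Proposition \ref{n-mult} and the Remark in Section \ref{sec-2} merely spell out what the paper leaves implicit.
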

\medskip

\section{The controlling algebra and cohomology for relative averaging algebras}\label{sec-5}
In this section, we first construct a $L_\infty$-algebra whose Maurer-Cartan elements are precisely relative averaging algebras. Next, given a relative averaging algebra, we construct the corresponding controlling $L_\infty$-algebra. Finally, we define the cohomology of a relative averaging algebra with coefficients in a given bimodule. 
%We start with some definitions on $L_\infty$-algebras.

\subsection*{$L_\infty$-algebras} The notion of $L_\infty$-algebras (also known as strongly homotopy Lie algebras) first appeared in the work of Lada and Stasheff \cite{lada-stasheff}. In this paper, we follow the equivalent definition by a degree shift \cite{lada-markl}.

\begin{defn}
A {\bf $L_\infty$-algebra} is a pair $(L,\{l_k\}^\infty_{k=1} )$ consisting of a graded vector space  $L =\oplus_{i \in \mathbb{Z}} L_i$   equipped with a collection $\{ l_k : L^{\otimes k} \rightarrow L \}_{k=1}^\infty$ of degree $1$ graded linear maps that are graded symmetric in the sense that
\begin{align*}
l_k (x_{\sigma (1)}, \ldots, x_{\sigma (k)}) = \epsilon (\sigma) l_k (x_1, \ldots, x_k), \text{ for } k \geq 1 \text{ and } \sigma \in \mathbb{S}_k,
\end{align*}
and satisfy the following higher Jacobi identities: 
\begin{align*}
\sum_{i+j= n+1} \sum_{\sigma \in \mathbb{S}_{(i, n-i)}} \epsilon (\sigma) ~l_j \big( l_i ( x_{\sigma(1)}, \ldots, x_{\sigma(i)} ), x_{\sigma (i+1)}, \ldots, x_{\sigma(n)}  \big) = 0,
\end{align*}
for all $n \geq 1$ and homogeneous elements $x_1, \ldots, x_n \in {L}$. Here $\epsilon (\sigma)$ is the Koszul sign that appears in the graded context.
\end{defn}

Throughout the paper, we assume that all $L_\infty$-algebras are weakly filtered \cite{getzler} (see also \cite{laza-rota}). Thus, certain infinite sums in $L$ are always convergent.

\begin{defn}
Let $({L},\{l_k\}^\infty_{k=1} )$ be a $L_\infty$-algebra. An element $\alpha \in {L}_0$ is said to be a {\bf Maurer-Cartan element} of the $L_\infty$-algebra is $\alpha$ satisfies
\begin{align*}
l_1 (\alpha) + \frac{1}{2!} l_2 (\alpha, \alpha) + \cdots + \frac{1}{n!} ~ l_n (\alpha, \ldots, \alpha) + \cdots \cdot = 0 \quad \big(\text{i.e. } \sum_{k=1}^\infty \frac{1}{k!} l_k (\alpha, \ldots, \alpha) = 0\big).
\end{align*}
\end{defn}

\medskip

If $({L},\{l_k\}^\infty_{k=1} )$ is a $L_\infty$-algebra and $\alpha \in {L}_0$ is a Maurer-Cartan element of it, then one can construct a new $L_\infty$-algebra $({L},\{l_k^\alpha \}^\infty_{k=1} )$ on the same graded vector space ${L}$ with the structure maps given by
\begin{align*}
l_k^\alpha (x_1, \ldots, x_k) = l_k (x_1, \ldots, x_k) + l_{1+k} (\alpha, x_1, \ldots, x_k) + \cdots + \frac{1}{n!}~ l_{n+k} (\underbrace{\alpha, \ldots, \alpha}_{n \text{ copies}}, x_1, \ldots, x_k) + \cdots \cdot, \text{ for } k \geq 1.
\end{align*}
This is called the $L_\infty$-algebra obtained from $({L},\{l_k\}^\infty_{k=1} )$ twisted by the Maurer-Cartan element $\alpha$. 

\begin{remark}\label{rem-get}(\cite{getzler})
Let $\alpha$ be a Maurer-Cartan element of the $L_\infty$-algebra $({L},\{l_k\}^\infty_{k=1} )$. 
Then for any $\alpha' \in L_0$, the sum $\alpha + \alpha'$ is a Maurer-Cartan element of the $L_\infty$-algebra $({L},\{l_k\}^\infty_{k=1} )$ if and only if $\alpha'$ is a Maurer-Cartan element of the twisted $L_\infty$-algebra $({L},\{l_k^\alpha \}^\infty_{k=1} )$.
\end{remark}

\medskip

There is a well-known construction of a $L_\infty$-algebra given by Voronov \cite{voro}.
Let $(\mathfrak{g}, \mathfrak{a}, p, \Delta)$ be a quadruple consists of a graded Lie algebra $\mathfrak{g}$ (with the graded Lie bracket $[~,~]$), an abelian graded Lie subalgebra $\mathfrak{a} \subset \mathfrak{g}$, a projection map $p: \mathfrak{g} \rightarrow \mathfrak{g}$ with $\mathrm{im}(p) = \mathfrak{a}$ and $\mathrm{ker}(p) \subset \mathfrak{g}$ a graded Lie subalgebra, and an element $\Delta \in \mathrm{ker}(p)_1$ that satisfies $[\Delta, \Delta] = 0$. Such a quadruple is called a {\bf $V$-data}.

\begin{thm}\label{v-data-l}
Let $(\mathfrak{g}, \mathfrak{a}, p, \Delta)$ be a $V$-data.

(i) Then the graded vector space $\mathfrak{a}$ can be equipped with a $L_\infty$-algebra with the structure maps
\begin{align*}
l_k (a_1, \ldots, a_k) = p [ \cdots [[ \Delta, a_1], a_2], \ldots, a_k], \text{ for } k \geq 1.
\end{align*}

(ii) Let $\mathfrak{h} \subset \mathfrak{g}$ be a graded Lie subalgebra that satisfies $[\Delta, \mathfrak{h}] \subset \mathfrak{h}$. Then the graded vector space $s^{-1} \mathfrak{h} \oplus \mathfrak{a}$ can be given a $L_\infty$-algebra with the structure maps
\begin{align*}
l_1 \big( (s^{-1}x, a) \big) =~& \big(  -s^{-1}[\Delta, x], p (x + [\Delta, a])  \big),\\
l_2 \big( (s^{-1}x,0),  (s^{-1}y, 0) \big) =~& \big( (-1)^{|x|} s^{-1}[x, y], 0 \big), \\
l_k \big( (s^{-1}x,0),(0, a_1), \ldots, (0,a_{k-1}) \big) =~& \big( 0, p [ \cdots [[x, a_1], a_2], \ldots, a_{k-1}] \big), ~ k \geq 2,\\
l_k \big( (0,a_1), \ldots, (0,a_k) \big) =~& \big( 0, p [ \cdots [[ \Delta, a_1], a_2], \ldots, a_k] \big), ~ k \geq 2,
\end{align*}
for homogeneous elements $x, y \in \mathfrak{h}$ (which are considered as elements $s^{-1}x, s^{-1} y \in s^{-1} \mathfrak{h}$ by a degree shift) and homogeneous elements $a_1, \ldots, a_k \in \mathfrak{a}$. Up to permutations of the above inputs, all other maps vanish.
\end{thm}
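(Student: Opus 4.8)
This is Voronov's higher derived bracket construction \cite{voro}, and the plan is to verify the $L_\infty$-axioms directly from the four pieces of $V$-data, treating (i) as the essential case and deducing (ii) by the same technique applied to the subalgebra $\mathfrak{h}$.

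\emph{Part (i).} First I would record that each $l_k$ has degree $1$: since $\Delta\in\mathfrak{g}_1$ and $p$ preserves degree, $l_k(a_1,\dots,a_k)=p[\cdots[[\Delta,a_1],a_2],\dots,a_k]$ has degree $1+\sum_i |a_i|$, i.e. is a degree-$1$ map on $\mathfrak{a}$. Next comes graded symmetry. It suffices to check invariance under an adjacent transposition of $a_i,a_{i+1}$; writing $c$ for the inner iterated bracket $[\cdots[\Delta,a_1],\dots,a_{i-1}]$, the graded Jacobi identity gives
\begin{align*}
[[c,a_i],a_{i+1}]-(-1)^{|a_i||a_{i+1}|}[[c,a_{i+1}],a_i] = [c,[a_i,a_{i+1}]],
\end{align*}
which vanishes because $\mathfrak{a}$ is abelian; bracketing on the outside by the remaining $a_j$ and applying $p$ preserves this identity, so each $l_k$ is graded symmetric with exactly the Koszul sign $(-1)^{|a_i||a_{i+1}|}$.

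The core is the higher Jacobi identity. The clean way to organize it is to package $\{l_k\}$ as a degree-$1$ coderivation $D$ of the cofree cocommutative coalgebra on the (shifted) space $\mathfrak{a}$, so that the whole family of identities is equivalent to the single equation $D^2=0$. Because $D$ is built from $\mathrm{ad}_\Delta$ followed by $p$, the composite $D^2$ produces terms of the shape $p\,[\,\cdots, p[\cdots,\mathrm{ad}_\Delta\mathrm{ad}_\Delta(\cdots)]]$, and two structural facts collapse them: $\mathrm{ker}(p)$ being a subalgebra lets one commute the inner projector past brackets of $\Delta$-terms and recombine the two copies of $p$, while $\mathfrak{a}=\mathrm{im}(p)$ being abelian kills the surviving $p[pX,pY]$ contributions. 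After this reduction every term is proportional to $p[\cdots[[\Delta,\Delta],a_{i_1}],\dots]$, so $D^2=0$ follows from the Maurer-Cartan condition $[\Delta,\Delta]=0$. The main obstacle is precisely this step: controlling the Koszul signs and the shuffle combinatorics while showing that all cross terms either cancel in pairs or are absorbed into $[\Delta,\Delta]$.

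\emph{Part (ii).} Here I would run the same derived-bracket calculus on $s^{-1}\mathfrak{h}\oplus\mathfrak{a}$, the new input being the hypothesis $[\Delta,\mathfrak{h}]\subset\mathfrak{h}$, which guarantees that $\mathrm{ad}_\Delta$ preserves $\mathfrak{h}$ so that $l_1$ restricts to $-s^{-1}\mathrm{ad}_\Delta$ on the $s^{-1}\mathfrak{h}$-summand and lands back in $s^{-1}\mathfrak{h}$. One checks $l_1^2=0$ immediately from $[\Delta,[\Delta,x]]=\tfrac12[[\Delta,\Delta],x]=0$, and the compatibility of $l_1$ with the mixed brackets $l_k((s^{-1}x,0),(0,a_1),\dots,(0,a_{k-1}))=(0,p[\cdots[[x,a_1],a_2],\dots,a_{k-1}])$ again reduces, via $\mathrm{ker}(p)$ being a subalgebra and $\mathfrak{a}$ abelian, to $[\Delta,\Delta]=0$ together with the Jacobi identity. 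Note that, unlike in (i), the bracket $l_2$ on $s^{-1}\mathfrak{h}$ is the genuine shifted Lie bracket $(-1)^{|x|}s^{-1}[x,y]$ rather than a derived one, so (ii) is not a literal instance of (i); conceptually it arises by applying (i) to a $V$-data on a semidirect enlargement of $\mathfrak{g}$ in which $s^{-1}\mathfrak{h}$ is adjoined, but the direct verification is the most economical route. The anticipated difficulty is again bookkeeping: confirming that the degree shifts and signs across the four families of structure maps are mutually consistent.
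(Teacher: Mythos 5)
The paper offers no proof of this theorem: it is quoted as a known result of Voronov with a bare citation to \cite{voro}, so there is no internal argument to compare yours against. Your outline follows Voronov's own proof and isolates the correct structural inputs: graded symmetry of each $l_k$ reduces, via the graded Jacobi identity, to $[c,[a_i,a_{i+1}]]=0$, which holds because $\mathfrak{a}$ is abelian; and the identity $p[\Delta,X]=p[\Delta,pX]$ (valid because $\Delta\in\ker p$ and $\ker p$ is a subalgebra, so $[\Delta,X-pX]\in\ker p$) is exactly the lemma that lets the two copies of $p$ occurring in $D^2$ be recombined. The one place your sketch stops short of a proof is the assertion that after this reduction every surviving term is proportional to $p[\cdots[[\Delta,\Delta],a_{i_1}],\ldots]$: this is precisely Voronov's main identity (the $n$-th Jacobiator of the higher derived brackets equals the $n$-th derived bracket of $\tfrac12[\Delta,\Delta]$), and the shuffle and Koszul-sign bookkeeping needed to establish it is the entire content of the theorem rather than a routine check, as you yourself acknowledge. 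The same remark applies to part (ii), where your observation that it is not a literal instance of (i) but arises from a derived-bracket construction on an enlargement adjoining $s^{-1}\mathfrak{h}$ is the right conceptual picture. As written, then, your argument is a correct plan whose hard combinatorial core is asserted rather than carried out; given that the paper itself treats the statement as quoted background, deferring that core to \cite{voro} is the appropriate resolution.
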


\medskip

\subsection*{Maurer-Cartan characterization of relative averaging algebras}
Let $A$ and $M$ be two vector spaces. Consider the graded Lie algebra 
\begin{align*}
\mathfrak{g} = \big(  \oplus_{n = 0}^\infty CY^{n+1} (A \oplus M, A \oplus M), [~,~]_\mathsf{MM}  \big)
\end{align*} 
associated to the vector space $A \oplus M$. For any $k, l \geq 0$, let $\mathcal{A}^{k, l}$ be the direct sum of all possible $(k+l)$ tensor powers of $A$ and $M$ in which $A$ appears $k$ times and $M$ appears $l$ times. For instance,
\begin{align*}
\mathcal{A}^{2,0} =  A \otimes A, \quad \mathcal{A}^{0,2} = M \otimes M ~~~ \text{ and } ~~~ \mathcal{A}^{1,1} = (A \otimes M) \oplus (M \otimes A).
\end{align*}
Then for any $n \geq 1$, there is an isomorphism $(A \oplus M)^{\otimes n} \cong \oplus_{k+l=n} \mathcal{A}^{k,l}$ of vector spaces.
%\begin{align*}
%CY^n (A \oplus M, A \oplus M) \cong \big(  \oplus_{k+l = n} \mathrm{Hom}({\bf k} [Y_n] \otimes \mathcal{A}^{k,l}, A) \big) \oplus \big(   \oplus_{k+l = n} \mathrm{Hom}({\bf k} [Y_n] \otimes \mathcal{A}^{k,l}, M)  \big).
%\end{align*}
A linear map $f \in CY^{n+1} (A \oplus M, A \oplus M)$ is said to have {\bf bidegree} $k|l$ with $k+l = n$ if 
\begin{align*}
f ({\bf k}[Y_{n+1}] \otimes \mathcal{A}^{k+1, l}) \subset A, \quad f ({\bf k}[Y_{n+1}] \otimes \mathcal{A}^{k, l+1}) \subset M ~~~~ \text{ and } ~~~~ f = 0 \text{ otherwise}.
\end{align*}
We denote the set of all linear maps of bidegree $k|l$ by $CY^{k|l} (A \oplus M, A \oplus M)$. Note that there are natural isomorphisms
\begin{align*}
CY^{k|0} (A \oplus M, A \oplus M) \cong~&  \mathrm{Hom} ({\bf k}[Y_{k+1}] \otimes A^{\otimes k+1}, A) \oplus \mathrm{Hom} ({\bf k}[Y_{k+1}] \otimes \mathcal{A}^{k, 1}, M), \\
CY^{-1 | l} (A \oplus M, A \oplus M) \cong~& \mathrm{Hom} ({\bf k } [Y_l] \otimes M^{\otimes l}, A).
\end{align*}

Moreover, we have the following interesting result.

\begin{prop}\label{deg-pre}
For $f \in CY^{k_f | l_f} (A \oplus M, A \oplus M)$ and $g \in CY^{k_g | l_g} (A \oplus M, A \oplus M)$, we have
\begin{align*}
[f, g ]_\mathsf{MM} \in  CY^{k_f + k_g | l_f + l_g} (A \oplus M, A \oplus M).
\end{align*}
\end{prop}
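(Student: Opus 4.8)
The plan is to show that the Majumdar–Mukherjee bracket preserves bidegree by a careful bookkeeping of where the inputs land under the partial compositions $\circ_i$. The key observation is that the bracket $[f,g]_\mathsf{MM}$ is built entirely out of the operations $f\circ_i g$ and $g\circ_i f$ (see (\ref{mm-circ})), so it suffices to prove the statement for a single partial composition, namely that $f\circ_i g$ lies in $CY^{k_f+k_g\,|\,l_f+l_g}(A\oplus M, A\oplus M)$ whenever $f$ has bidegree $k_f|l_f$ and $g$ has bidegree $k_g|l_g$. Once this is established for all $i$, linearity of the bidegree decomposition and the fact that $[f,g]_\mathsf{MM}$ is a signed sum of such terms will give the conclusion immediately.

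First I would fix an arbitrary homogeneous input sitting in some $\mathcal{A}^{p,q}$ with $p+q = m_f+m_g$ (where $m_f = k_f+l_f$ and $m_g = k_g+l_g$ are the arities minus one, so that $f\in CY^{m_f+1}$ and $g\in CY^{m_g+1}$), and trace through the formula (\ref{partial-comp}) for $(f\circ_i g)(y; a_1,\ldots,a_{m_f+m_g})$. The inner evaluation $g(R_i^{\cdots}(y); a_i,\ldots,a_{i+m_g})$ receives $m_g+1$ of the inputs; by the bidegree hypothesis on $g$, this output lands in $A$ precisely when exactly $k_g+1$ of those inputs come from $A$ (and $l_g$ from $M$), and lands in $M$ precisely when $k_g$ come from $A$ and $l_g+1$ from $M$. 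I would then feed this output, together with the remaining $m_f$ inputs, into $f$, and apply the bidegree hypothesis on $f$. The arithmetic to check is that the total count of $A$-slots and $M$-slots consumed adds up correctly in each of the two cases (output of $g$ in $A$ versus in $M$), and that the final landing space ($A$ or $M$) is governed exactly by whether the grand total of $A$-inputs is $k_f+k_g+1$ or $k_f+k_g$. This is a short but slightly delicate case split.

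The main obstacle, and the only place where care is genuinely required, is handling the boundary behaviour of the tree-face maps $R_0^{m;i,n}$ and $R_i^{m;i,n}$ from (\ref{r0-maps})–(\ref{ri-maps}): I must confirm that these reindexing maps on $Y_{m_f+m_g}$ do not interfere with the bidegree count, since the bidegree is a condition on the tensor-power slots $D^{\otimes \bullet}$ and is indifferent to the tree label $y$. In other words, the tree component merely gets routed through the face maps while the algebra inputs $a_1,\ldots,a_{m_f+m_g}$ are partitioned exactly as in the ungraded (associative) case, so the bidegree count reduces to the familiar slot-counting argument for the Gerstenhaber bracket. I expect this step to be the one that needs the most explicit verification, but it is essentially the remark that the maps $\star_i^y$ and the face maps act only on $Y_\bullet$ and commute with the decomposition $(A\oplus M)^{\otimes n}\cong \oplus_{k+l=n}\mathcal{A}^{k,l}$. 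After dispatching both partial-composition terms and noting that the two summands in (\ref{mm-circ}) individually preserve bidegree, I would assemble these into the claimed identity, completing the proof.
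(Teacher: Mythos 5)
Your proposal follows essentially the same route as the paper's proof: reduce to the partial compositions $f\circ_i g$, split into the two cases according to whether the inner block of inputs lies in $\mathcal{A}^{k_g+1,l_g}$ or $\mathcal{A}^{k_g,l_g+1}$ (with $g$ vanishing otherwise), count slots, and observe that the face maps $R_0^{m;i,n}$, $R_i^{m;i,n}$ act only on the tree label and so do not disturb the bidegree count. The plan is correct and matches the paper's argument.
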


\begin{proof}
Let $f \in CY^{m+1} (A \oplus M, A \oplus M)$ and $g \in CY^{n+1} (A \oplus M, A \oplus M).$ Then we have $k_f + l_f = m$ and $k_g + l_g = n$. For any $y \in Y_{m+n+1}$, $1 \leq i \leq m+1$ and $x_1 \otimes \cdots \otimes x_{m+n+1} \in \mathcal{A}^{k_f + k_g + 1, l_f + l_g }$, we have
\begin{align}\label{fg-term}
(f \circ_i g ) \big( y ; x_1, \ldots, x_{m+n+1}   \big) = f \big(  R_0^{m+1; i, n+1} (y) ; x_1, \ldots, x_{i-1}, g \big(  R_{i}^{m+1; i, n+1} (y); x_i, \ldots, x_{i+n} \big), \ldots, x_{m+n+1}  \big).
\end{align}
Note that the term  $g \big(  R_{i}^{m+1; i, n+1} (y); x_i, \ldots, x_{i+n} \big)$ is nonvanishing only when the tensor product $x_i \otimes x_{i+1} \otimes \cdots \otimes x_{i+n}$ lies in $\mathcal{A}^{k_g +1 , l_g}$ or lies in $\mathcal{A}^{k_g , l_g + 1}$.

\medskip

 {Case 1.} (Let $x_i \otimes x_{i+1} \otimes \cdots \otimes x_{i+n} \in \mathcal{A}^{k_g +1 , l_g}$.) In this case, $g \big(  R_{i}^{m+1; i, n+1} (y); x_i, \ldots, x_{i+n} \big) \in A$. Hence the tensor product
 \begin{align*}
 x_1 \otimes \cdots \otimes x_{i-1} \otimes g \big(  R_{i}^{m+1; i, n+1} (y); x_i, \ldots, x_{i+n} \big) \otimes x_{i+n+1} \otimes \cdots \otimes x_{m+n+1} ~\in \mathcal{A}^{k_f +1, l_f}.
 \end{align*}
 As a consequence, the term (\ref{fg-term}) lies in $A$.
 
 \medskip
 
  {Case 2.} (Let $x_i \otimes x_{i+1} \otimes \cdots \otimes x_{i+n} \in \mathcal{A}^{k_g , l_g + 1}$.) In this case, $g \big(  R_{i}^{m+1; i, n+1} (y); x_i, \ldots, x_{i+n} \big) \in M$. Hence the tensor product
 \begin{align*}
 x_1 \otimes \cdots \otimes x_{i-1} \otimes g \big(  R_{i}^{m+1; i, n+1} (y); x_i, \ldots, x_{i+n} \big) \otimes x_{i+n+1} \otimes \cdots \otimes x_{m+n+1} ~\in \mathcal{A}^{k_f +1, l_f}.
 \end{align*}
 As a consequence, the term (\ref{fg-term}) also lies in $A$. Therefore, we always have $(f \circ_i g) (\mathcal{A}^{k_f + k_g +1, l_f + l_g}) \subset A$. Similarly, we can show that
 \begin{align*}
 (f \circ_i g)  (\mathcal{A}^{k_f + k_g, l_f + l_g + 1}) \subset M \quad \text{ and } \quad f \circ_i g = 0 ~~~ \text{ otherwise.}
 \end{align*}
By interchanging the roles of $f$ and $g$, we get similar results for $g \circ_i f$. Therefore, it follows from (\ref{mm-circ}) that
\begin{align*}
[f, g]_\mathsf{MM} (\mathcal{A}^{k_f + k_g +1, l_f + l_g}) \subset A, \quad [f, g]_\mathsf{MM} (\mathcal{A}^{k_f + k_g, l_f + l_g +1}) \subset M ~~~~~ ~~ \text{ and } ~~~~~ ~~ [f, g]_\mathsf{MM} = 0 ~~\text{ otherwise}.
\end{align*}
Hence we get that $[f, g]_\mathsf{MM} \in CY^{k_f + k_g | l_f + l_g} (A \oplus M, A \oplus M)$.
\end{proof}

As a consequence of the previous proposition, we get the following.

\begin{prop}\label{sublie}
Let $A$ and $M$ be two vector spaces. Then

 (i) $\mathfrak{h} = CY^{\bullet | 0} (A \oplus M, A \oplus M) =  \oplus_{n = 0}^\infty CY^{n|0} (A \oplus M, A \oplus M) \subset \mathfrak{g}$ is a graded Lie subalgebra;
 
 (ii) $\mathfrak{a} = CY^{-1 | \bullet +1} (A \oplus M, A \oplus M) =  \oplus_{n = 0}^\infty CY^{-1| n +1} (A \oplus M, A \oplus M) \subset \mathfrak{g}$ is an abelian subalgebra.  
\end{prop}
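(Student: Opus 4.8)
The plan is to derive both statements directly from Proposition~\ref{deg-pre}, which asserts that the Majumdar--Mukherjee bracket is additive on bidegrees, together with the elementary observation that the spaces $\mathcal{A}^{k,l}$ vanish whenever $k<0$ or $l<0$ (one cannot form a tensor power with a negative number of copies of $A$ or of $M$). Since $\mathfrak{h}$ and $\mathfrak{a}$ are graded subspaces of $\mathfrak{g}$ by their very definition, in each case it only remains to verify closure under $[~,~]_\mathsf{MM}$, and, for $\mathfrak{a}$, the additional vanishing of the bracket.

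For part (i), I would take homogeneous elements $f \in CY^{k_f|0}(A \oplus M, A \oplus M)$ and $g \in CY^{k_g|0}(A \oplus M, A \oplus M)$ with $k_f, k_g \geq 0$; these span $\mathfrak{h}$. Proposition~\ref{deg-pre} gives $[f,g]_\mathsf{MM} \in CY^{k_f+k_g \,|\, 0}(A \oplus M, A \oplus M)$. Since $k_f + k_g \geq 0$, this lands again in $\mathfrak{h} = \oplus_{n \geq 0} CY^{n|0}(A \oplus M, A \oplus M)$, so $\mathfrak{h}$ is closed under the bracket and is therefore a graded Lie subalgebra.

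For part (ii), I would take homogeneous elements $f \in CY^{-1|l_f}(A \oplus M, A \oplus M)$ and $g \in CY^{-1|l_g}(A \oplus M, A \oplus M)$ with $l_f, l_g \geq 1$; these span $\mathfrak{a}$. By Proposition~\ref{deg-pre}, the bracket $[f,g]_\mathsf{MM}$ has bidegree $-2 \,|\, l_f + l_g$. The decisive point is that a map of bidegree $-2|l$ is, by definition, supported on $\mathcal{A}^{-1,l}$ (with image in $A$) and on $\mathcal{A}^{-2,l+1}$ (with image in $M$), and is zero on every other summand of $(A \oplus M)^{\otimes(\ast)}$. Because $\mathcal{A}^{-1,l} = 0$ and $\mathcal{A}^{-2,l+1} = 0$, such a map must vanish identically. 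Hence $[f,g]_\mathsf{MM} = 0$, which simultaneously shows that $\mathfrak{a}$ is closed under the bracket (as $0 \in \mathfrak{a}$) and that the restricted bracket on $\mathfrak{a}$ is trivial, so $\mathfrak{a}$ is an abelian subalgebra.

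There is no substantial obstacle here: the entire content is carried by the bidegree-additivity established in Proposition~\ref{deg-pre}. The only point demanding a moment of care is recognizing that a negative first bidegree index forces the relevant cochain to be zero, precisely because the corresponding spaces $\mathcal{A}^{k,l}$ with $k<0$ are empty; once this is noted, both the subalgebra property of $\mathfrak{h}$ and the abelianness of $\mathfrak{a}$ follow immediately.
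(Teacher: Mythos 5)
Your proposal is correct and follows exactly the route the paper intends: the paper states Proposition~\ref{sublie} without proof as ``a consequence of the previous proposition,'' i.e.\ the bidegree-additivity of $[~,~]_\mathsf{MM}$ from Proposition~\ref{deg-pre}. Your explicit observation that a bidegree $-2\,|\,l$ cochain must vanish because $\mathcal{A}^{k,l}=0$ for $k<0$ is precisely the point the paper leaves implicit, and it is handled correctly.
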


Next, we construct a $V$-data as follows. Let $\mathfrak{g} = \big( \oplus_{n = 0}^\infty CY^{n+1} (A \oplus M, A \oplus M), [~, ~]_\mathsf{MM} \big)$ be the graded Lie algebra associated to the vector space $A \oplus M$. Consider the abelian Lie subalgebra $\mathfrak{a} = \oplus_{n = 0}^\infty CY^{-1| n +1} (A \oplus M, A \oplus M)$, and let $p: \mathfrak{g} \rightarrow \mathfrak{g}$ be the projection onto the subspace $\mathfrak{a}$. Then the quadruple $(\mathfrak{g}, \mathfrak{a}, p, \overline{\Delta} = 0)$ is a $V$-data. Moreover, it follows from Proposition \ref{sublie} that $\mathfrak{h} = \oplus_{n = 0}^\infty CY^{n |0} (A \oplus M , A \oplus M)$ is a graded Lie subalgebra of $\mathfrak{g}$ that obviously satisfies $[ \overline{\Delta}, \mathfrak{h} ]_\mathsf{MM} \subset \mathfrak{h}$. Hence by applying Theorem \ref{v-data-l}, we obtain the following.

\begin{thm}\label{ll-iinnff}
Let $A$ and $M$ be two vector spaces. Then there is a $L_\infty$-algebra structure on the graded vector space $s^{-1} \mathfrak{h} \oplus \mathfrak{a}$ with the structure maps $\{ l_k \}_{k = 1}^\infty$ are given by
\begin{align*}
l_2 ( (s^{-1} f,0),( s^{-1} g,0) ) =~& ((-1)^{|f|} ~s^{-1} [f,g]_\mathsf{MM}, 0),\\
l_k ((s^{-1}f,0),(0, h_1), \ldots, (0,h_{k-1})) =~& (0,p [ \cdots [[ f, h_1]_\mathsf{MM}, h_2 ]_\mathsf{MM}, \ldots, h_{k-1}]_\mathsf{MM}), ~ k \geq 2,
\end{align*}
for homogeneous elements $f, g \in \mathfrak{h}$ (considered as elements $s^{-1} f, s^{-1} g \in s^{-1} \mathfrak{h}$) and homogeneous elements $h_1, \ldots, h_{k-1} \in \mathfrak{a}$. Up to permutations of the above entries, all other maps vanish.
\end{thm}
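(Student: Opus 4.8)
The plan is to recognize the statement as a direct instance of Voronov's higher derived bracket construction (Theorem \ref{v-data-l}(ii)) applied to the $V$-data $(\mathfrak{g}, \mathfrak{a}, p, \overline{\Delta} = 0)$ together with the graded Lie subalgebra $\mathfrak{h}$, followed by simplifying the resulting structure maps using the fact that $\overline{\Delta} = 0$. Almost all of the setup has already been assembled before the statement: $\mathfrak{g}$ is the Majumdar--Mukherjee graded Lie algebra, $\mathfrak{a}$ is an abelian subalgebra by Proposition \ref{sublie}(ii), and $\mathfrak{h}$ is a graded Lie subalgebra by Proposition \ref{sublie}(i). So the first task is merely to certify that this quadruple really is a $V$-data.

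First I would check the two conditions that are not immediate. For $p$ to be a legitimate projection with $\ker(p)$ a graded Lie subalgebra, I would invoke the bidegree decomposition of $\mathfrak{g}$: every homogeneous map carries a well-defined bidegree $k|l$ with first index $k \geq -1$, the subspace $\mathfrak{a}$ consists precisely of the bidegree $-1|\bullet$ components, and hence $\ker(p)$ is the sum of all components with $k \geq 0$. By Proposition \ref{deg-pre} the bracket adds bidegrees, so a bracket of two maps with first index $\geq 0$ again has first index $\geq 0$; this shows $\ker(p)$ is closed under $[~,~]_\mathsf{MM}$. The remaining $V$-data requirements are trivial for $\overline{\Delta} = 0$: it lies in $\ker(p)_1$ and satisfies $[\overline{\Delta}, \overline{\Delta}]_\mathsf{MM} = 0$. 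Likewise $[\overline{\Delta}, \mathfrak{h}]_\mathsf{MM} = 0 \subset \mathfrak{h}$, so Theorem \ref{v-data-l}(ii) applies verbatim to produce an $L_\infty$-structure on $s^{-1}\mathfrak{h} \oplus \mathfrak{a}$.

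The second step is to substitute $\Delta = \overline{\Delta} = 0$ into the four families of Voronov maps and read off the surviving terms. Setting $\Delta = 0$ annihilates the brackets $[\Delta, x]$ and $[\Delta, a]$ in the $l_1$ formula, leaving $l_1((s^{-1}x, a)) = (0, p(x))$; since every $x \in \mathfrak{h}$ has bidegree first index $\geq 0$ it lies in $\ker(p)$, so $p(x) = 0$ and therefore $l_1 = 0$. For the same reason the family $l_k((0,a_1),\ldots,(0,a_k)) = (0, p[\cdots[[\overline{\Delta}, a_1], \ldots], a_k]_\mathsf{MM})$ vanishes identically, while the two remaining families, $l_2$ on a pair $(s^{-1}f, 0),(s^{-1}g,0)$ and $l_k$ on $(s^{-1}f,0),(0,h_1),\ldots,(0,h_{k-1})$, are unaffected by the value of $\Delta$ and reproduce exactly the formulas in the statement. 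There is no serious obstacle here; the argument is essentially a specialization of a theorem already in hand, and the only point that requires a moment's care is the vanishing of $l_1$, which hinges on the disjointness of the bidegrees carried by $\mathfrak{h}$ and $\mathfrak{a}$ (equivalently $p|_{\mathfrak{h}} = 0$).
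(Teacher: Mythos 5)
Your proposal is correct and follows essentially the same route as the paper: the paper also constructs the $V$-data $(\mathfrak{g}, \mathfrak{a}, p, \overline{\Delta}=0)$ from Propositions \ref{deg-pre} and \ref{sublie}, notes $[\overline{\Delta}, \mathfrak{h}]_\mathsf{MM} \subset \mathfrak{h}$, and obtains the theorem by applying Theorem \ref{v-data-l}(ii). Your additional remarks — that $\ker(p)$ is closed under the bracket by the bidegree additivity, and that $l_1$ vanishes because $p|_{\mathfrak{h}}=0$ — are correct and simply make explicit what the paper leaves implicit.
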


\medskip

%We have seen in Remark \ref{remark-embd} that there is an embedding of graded Lie algebras
%\begin{align*}
%( \oplus_{n=0}^\infty C^{n+1} (A \oplus M, A \oplus M), [~,~]_\mathsf{G} ) \hookrightarrow (\oplus_{n=0}^\infty CY^{n+1} (A \oplus M, A \oplus M), [~,~]_\mathsf{MM} ),~ f \mapsto \widetilde{f}.
%\end{align*}
%This restricts to an embedding of graded Lie algebras
%\begin{align*}
%\mathfrak{h}' := \big( \oplus_{n=0}^\infty \underbrace{C^{n|0} (A \oplus M, A \oplus M)}_{ = \mathrm{Hom} (A^{\otimes n+1}, A) \oplus \mathrm{Hom} (\mathcal{A}^{n,1}, M)} , [~,~]_\mathsf{G} \big) \hookrightarrow \big( \oplus_{n=0}^\infty CY^{n|0} (A \oplus M, A \oplus M), [~,~]_\mathsf{MM} \big) = \mathfrak{h}.
%\end{align*}
%When considering this embedding, the $L_\infty$-algebra structure on $s^{-1} \mathfrak{h} \oplus \mathfrak{a}$ induces an $L_\infty$-algebra structure on $s^{-1} \mathfrak{h}' \oplus \mathfrak{a}$.

%Let 
%\begin{align*}
%\mathfrak{h}' =  \oplus_{n = 0}^\infty {C^{n|0} (A \oplus M, A \oplus M)} = \oplus_{n = 0}^\infty \big( \mathrm{Hom} (A^{\otimes n+1}, A) \oplus \mathrm{Hom} (\mathcal{A}^{n,1}, M) \big).
%\end{align*}
%Hence by the above observation, there is an embedding $\mathfrak{h}' \hookrightarrow \mathfrak{h}.$ 

Let $A$ and $M$ be two vector spaces. Suppose there are maps
\begin{align*}
\mu \in \mathrm{Hom} (A^{\otimes 2}, A), ~ ~ l_M \in \mathrm{Hom} (A \otimes M, M),~ ~ r_M \in \mathrm{Hom} (M \otimes A, M) ~\text{ and }~ P \in \mathrm{Hom} (M, A).
\end{align*}
We define an element $\Delta \in \mathfrak{h}_1 = CY^{1|0} (A \oplus M, A \oplus M) = \mathrm{Hom} ({\bf k}[Y_2] \otimes A^{\otimes 2}, A) \oplus \mathrm{Hom}({\bf k}[Y_2] \otimes \mathcal{A}^{1,1}, M)$ by
\begin{align}\label{del-del}
 \Delta \big( \begin{tikzpicture}[scale=0.1]
\draw (6,0) -- (8,-2);    \draw (8,-2) -- (10,0);     \draw (8,-2) -- (8,-4);     \draw (9,-1) -- ( 8,0);
\end{tikzpicture}  ; (a, u), (b, v) \big)= ( \mu (a , b) , r_M(u , b) ) ~~ \text{ and } ~~
\Delta \big(  \begin{tikzpicture}[scale=0.1]
\draw (0,0) -- (2,-2); \draw (2,-2) -- (4,0); \draw (2,-2) -- (2,-4); \draw (1,-1) -- (2,0); 
\end{tikzpicture}   ; (a, u), (b, v) \big) =  ( \mu( a , b), l_M(a , v)),
\end{align}
for $(a, u), (b, v) \in A \oplus M$. Note that $\Delta$ can be regarded as an element $s^{-1} \Delta \in (s^{-1} \mathfrak{h})_0$.

\begin{thm}\label{mc-ravg-thm}
With the above notations, $A_\mu := (A, \mu)$ is an associative algebra, $M_{l_M, r_M} := (M, l_M, r_M)$ is an $A_\mu$-bimodule and $P: M \rightarrow A$ is a relative averaging operator (in short, $M_{l_M,r_M} \xrightarrow{P} A_\mu$ is a relative averaging algebra) if and only if $\alpha = (s^{-1} \Delta, P) \in (s^{-1} \mathfrak{h} \oplus \mathfrak{a})_0$ is a Maurer-Cartan element of the $L_\infty$-algebra $(s^{-1} \mathfrak{h} \oplus \mathfrak{a}, \{ l_k \}_{k=1}^\infty)$.
\end{thm}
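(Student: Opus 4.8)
The plan is to expand the Maurer-Cartan equation $\sum_{k=1}^\infty \frac{1}{k!} l_k(\alpha, \ldots, \alpha) = 0$ for $\alpha = (s^{-1}\Delta, P)$ and show, using the bidegree bookkeeping of Proposition \ref{deg-pre}, that it truncates to exactly two conditions which we then identify. First I would write $\alpha = (s^{-1}\Delta, 0) + (0, P)$ and feed this into the structure maps of Theorem \ref{ll-iinnff}. Since $\Delta \in \mathfrak{h} = CY^{\bullet|0}$ while the projection $p$ lands in $\mathfrak{a} = CY^{-1|\bullet+1}$ (disjoint bidegrees), the unary bracket vanishes on $\alpha$; by multilinearity and graded symmetry the only surviving terms in $l_k(\alpha, \ldots, \alpha)$ come from the two families of nonzero maps, namely $l_2((s^{-1}\Delta, 0), (s^{-1}\Delta, 0)) = (-s^{-1}[\Delta, \Delta]_\mathsf{MM}, 0)$ (using $|\Delta| = 1$) and $l_k((s^{-1}\Delta, 0), (0, P), \ldots, (0, P)) = (0,\, p[\cdots[[\Delta, P]_\mathsf{MM}, P]_\mathsf{MM}, \ldots, P]_\mathsf{MM})$ with $k-1$ copies of $P$.

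Next I would control the iterated brackets by Proposition \ref{deg-pre}: starting from $\Delta \in CY^{1|0}$ and $P \in CY^{-1|1}$, one computes $[\Delta, P]_\mathsf{MM} \in CY^{0|1}$, then $[[\Delta, P]_\mathsf{MM}, P]_\mathsf{MM} \in CY^{-1|2}$, and every further bracketing with $P$ decreases the first bidegree index, landing in $CY^{-2|3}, CY^{-3|4}, \ldots$. Because $p$ retains only the bidegrees $-1|\bullet$, it annihilates $[\Delta, P]_\mathsf{MM}$ and all iterated brackets with three or more copies of $P$, so only the $k=3$ term survives the projection. Collecting coefficients, the $s^{-1}\mathfrak{h}$-component of the Maurer-Cartan equation is $\frac{1}{2!}(-s^{-1}[\Delta, \Delta]_\mathsf{MM})$ and the $\mathfrak{a}$-component is $\frac{1}{3!}\cdot 3 \cdot [[\Delta, P]_\mathsf{MM}, P]_\mathsf{MM} = \frac{1}{2}[[\Delta, P]_\mathsf{MM}, P]_\mathsf{MM}$. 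Hence $\alpha$ is a Maurer-Cartan element if and only if $[\Delta, \Delta]_\mathsf{MM} = 0$ and $[[\Delta, P]_\mathsf{MM}, P]_\mathsf{MM} = 0$.

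It then remains to read off the two equations. The first, $[\Delta, \Delta]_\mathsf{MM} = 0$, says that the operations $\dashv, \vdash$ on $A \oplus M$ determined by $\Delta$ via (\ref{del-del}) form a diassociative algebra (Majumdar--Mukherjee). Decomposing the five diassociative identities into their $A$- and $M$-valued components exactly as in the verification of Proposition \ref{am-diass} shows that this is equivalent to $\mu$ being associative together with $l_M, r_M$ satisfying the left, right, and middle bimodule axioms; that is, $A_\mu$ is an associative algebra and $M_{l_M, r_M}$ is an $A_\mu$-bimodule. Granted this, $\Delta$ is precisely the element (\ref{del-de}), and the second equation becomes the relative averaging condition: by the definition of the derived bracket in Section \ref{sec-4} one has $\llbracket P, P \rrbracket = (-1)^1[[\Delta, P]_\mathsf{MM}, P]_\mathsf{MM} = -[[\Delta, P]_\mathsf{MM}, P]_\mathsf{MM}$, and Theorem \ref{mc-thm-opp} identifies $\llbracket P, P \rrbracket = 0$ with $P$ being a relative averaging operator.

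Putting the two directions together gives the claim: if $M_{l_M, r_M} \xrightarrow{P} A_\mu$ is a relative averaging algebra then $A$ associative plus $M$ a bimodule forces $[\Delta, \Delta]_\mathsf{MM} = 0$ and $P$ relative averaging forces $\llbracket P, P \rrbracket = 0$, so both truncated equations hold and $\alpha$ is Maurer-Cartan; conversely a Maurer-Cartan $\alpha$ yields both equations, the first recovering the associative and bimodule structures and the second (now that $\Delta$ has the form (\ref{del-de})) the relative averaging identity. The only genuinely delicate step is the bidegree accounting that collapses the a priori infinite Maurer-Cartan series to a quadratic-plus-cubic expression; once Proposition \ref{deg-pre} is invoked, the remainder is routine bookkeeping of signs and binomial coefficients.
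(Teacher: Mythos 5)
Your proposal is correct and follows essentially the same route as the paper's own proof: both expand the Maurer--Cartan series, use the bidegree bookkeeping of Proposition \ref{deg-pre} to show that only the $l_2$ and $l_3$ terms survive (yielding $-\tfrac{1}{2}s^{-1}[\Delta,\Delta]_\mathsf{MM}$ and $\tfrac{1}{2}[[\Delta,P]_\mathsf{MM},P]_\mathsf{MM}$), and then identify these two vanishing conditions with associativity plus the bimodule axioms and, via Theorem \ref{mc-thm-opp}, with the relative averaging identity. Your version merely spells out the binomial coefficients and the annihilation of $[\Delta,P]_\mathsf{MM}$ under $p$ in more detail than the paper does.
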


%\begin{thm}
%Let $A$ and $M$ be two vector spaces.

%(i) Then the pair $(s^{-1} \mathfrak{h}' \oplus \mathfrak{a}, \{ l_k \}_{k=1}^\infty)$ is an $L_\infty$-algebra, where the structure maps are given by
%\begin{align*}
%l_2 ( (s^{-1} f,0), (s^{-1} g, 0)) =~& ((-1)^{|f|} ~ s^{-1} [f, g]_\mathsf{G}, 0),\\
%l_k ( (s^{-1}f,0), (0,h_1), \ldots, (0,h_{k-1})) =~& (0, p [ \cdots [[ \widetilde{f}, h_1]_\mathsf{MM}, h_2 ]_\mathsf{MM}, \ldots, h_{k-1}]_\mathsf{MM}), \text{ for } k \geq 2,
%\end{align*}
%for homogeneous elements $f, g \in \mathfrak{h}'$ (considered as elements $s^{-1} f, s^{-1} g \in s^{-1} \mathfrak{h}'$) and homogeneous elements $h_1, \ldots, h_{k-1} \in \mathfrak{a}$. 

%\medskip

%(ii) Suppose there are linear maps $\mu \in \mathrm{Hom} (A^{\otimes 2}, A)$, $l \in \mathrm{Hom} (A \otimes M, M)$, $r \in \mathrm{Hom} (M \otimes A, M)$ and $P \in \mathrm{Hom} (M, A)$. Then $A=(A, \mu)$ is an associative algebra, $M = (M, l, r)$ is an $A$-bimodule and $P : M \rightarrow A$ is a relative averaging operator (in short, $M \xrightarrow{P} A$ is a relative averaging algebra) if and only if $\alpha = (s^{-1} \pi, P) \in (s^{-1} W' + \mathfrak{a})_0$ is a Maurer-Cartan element in the $L_\infty$-algebra $(s^{-1} W' \oplus \mathfrak{a}, \{ l_k \}_{k=1}^\infty)$, where $\pi = \mu + l + r \in W'_1.$
%\end{thm}

\begin{proof}
First observe that $l_1 ((s^{-1} \Delta, P)) = 0$. Moreover, it follows from Proposition \ref{deg-pre} that
\begin{align*}
[\Delta, P]_\mathsf{MM} &\in CY^{0|1} (A \oplus M, A \oplus M), \quad [ [\Delta, P]_\mathsf{MM}, P]_\mathsf{MM} \in CY^{-1|2} (A \oplus M, A \oplus M) \\ & ~~~~~ \text{ and } ~~~~~ [ [ [\Delta, P]_\mathsf{MM}, P]_\mathsf{MM}, P]_\mathsf{MM} \in CY^{-2|3} (A \oplus M, A \oplus M).
\end{align*}
Since the space $CY^{-2|3} (A \oplus M, A \oplus M)$ is trivial, we have $ [ [ [\Delta, P]_\mathsf{MM}, P]_\mathsf{MM}, P]_\mathsf{MM} =0.$ As a consequence, we have $l_k \big( (s^{-1} \Delta, P), \ldots, (s^{-1} \Delta, P)   \big) = 0$ for $k \geq 4.$ Hence
\begin{align}\label{some-ee}
&\sum_{k=1}^\infty \frac{1}{k!} ~ l_k \big( (s^{-1} \Delta, P), \ldots, (s^{-1} \Delta, P)   \big) \nonumber \\
&= \frac{1}{2!} l_2 \big( (s^{-1} \Delta, P), (s^{-1} \Delta, P)   \big) ~+~ \frac{1}{3!} l_3 \big( (s^{-1} \Delta, P), (s^{-1} \Delta, P)  , (s^{-1} \Delta, P)   \big)  \nonumber \\
&= \big(   - \frac{1}{2} s^{-1} [\Delta, \Delta]_\mathsf{MM}, ~\frac{1}{2} [[\Delta, P]_\mathsf{MM}, P]_\mathsf{MM} \big).
\end{align}
Observe that
\begin{align*}
[\Delta, \Delta]_\mathsf{MM} =~& 0 ~~\text{ if and only if } A_\mu \text{ is an associative algebra and } M_{l_M, r_M} \text{ is an } A_\mu\text{-bimodule},  \\
[[\Delta, P]_\mathsf{MM}, P]_\mathsf{MM}  =~& 0 ~~ \text{ if and only if } P \text{ is a relative averaging operator}~ (\mathrm{cf.~ Theorem ~} \ref{mc-thm-opp}).
\end{align*}
Thus, it follows from (\ref{some-ee}) that $\alpha = (s^{-1} \Delta, P)$ is a Maurer-Cartan element of the $L_\infty$-algebra $(s^{-1} \mathfrak{h} \oplus \mathfrak{a}, \{ l_k \}_{k=1}^\infty)$ if and only if $M_{l_M,r_M} \xrightarrow{P} A_\mu$ is a relative averaging algebra.
\end{proof}

\medskip

Let $M_{l_M,r_M} \xrightarrow{P} A_\mu$ be a given relative averaging algebra. Here $\mu$ denotes the associative multiplication on $A$, and $l_M, r_M$ respectively denote the left and right $A$-actions on $M$. We have seen in the previous theorem that $\alpha = (s^{-1} \Delta, P) \in (s^{-1} \mathfrak{h} \oplus \mathfrak{a})_0$ is a Maurer-Cartan element of the $L_\infty$-algebra $(s^{-1} \mathfrak{h} \oplus \mathfrak{a}, \{ l_k \}_{k=1}^\infty)$, where $\Delta$ is given by (\ref{del-del}) or (\ref{del-de}). Therefore, we can consider the $L_\infty$-algebra $\big( s^{-1} \mathfrak{h} \oplus \mathfrak{a}, \{ l_k^{(s^{-1} \Delta, P)} \}_{k=1}^\infty \big)$ twisted by the Maurer-Cartan element $\alpha = (s^{-1} \Delta, P)$. Then by following Remark \ref{rem-get}, we get the next result.

\begin{thm}
Let $M_{ l_M, r_M } \xrightarrow{P} A_\mu$ be a given relative averaging algebra with the corresponding Maurer-Cartan element $\alpha = (s^{-1} \Delta, P) \in (s^{-1} \mathfrak{h} \oplus \mathfrak{a})_0$. Suppose there are maps
\begin{align*}
\mu' \in \mathrm{Hom} (A^{\otimes 2}, A), ~ ~ l'_M \in \mathrm{Hom} (A \otimes M, M),~ ~ r'_M \in \mathrm{Hom} (M \otimes A, M) ~\text{ and }~ P' \in \mathrm{Hom} (M, A).
\end{align*}
Then $M_{ l_M + l'_M , r_M+ r'_M} \xrightarrow{P+P'} A_{ \mu+ \mu'}$
is a relative averaging algebra if and only if $\alpha' = (s^{-1} \Delta', P')$ is a Maurer-Cartan element of the  $L_\infty$-algebra $\big( s^{-1} \mathfrak{h} \oplus \mathfrak{a}, \{ l_k^{(s^{-1} \Delta, P)} \}_{k=1}^\infty \big)$, where $\Delta'$ is defined in similar to (\ref{del-del}).
\end{thm}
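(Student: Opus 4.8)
The plan is to deduce the statement by combining Theorem \ref{mc-ravg-thm} with Remark \ref{rem-get}, the only genuine verification being that the assignment of $\Delta$ to the structure data $(\mu, l_M, r_M)$ is additive. First I would record this additivity. Inspecting the defining formula (\ref{del-del}), each component of the two values $(\mu(a,b), r_M(u,b))$ and $(\mu(a,b), l_M(a,v))$ is linear in the triple $(\mu, l_M, r_M)$. Hence, replacing $(\mu, l_M, r_M)$ by $(\mu + \mu', l_M + l'_M, r_M + r'_M)$ yields precisely $\Delta + \Delta'$, where $\Delta'$ is the element attached to $(\mu', l'_M, r'_M)$ by the same rule. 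Therefore the element of $(s^{-1}\mathfrak{h} \oplus \mathfrak{a})_0$ associated to the summed data is
\[
(s^{-1}(\Delta + \Delta'),\, P + P') = (s^{-1}\Delta, P) + (s^{-1}\Delta', P') = \alpha + \alpha'.
\]

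Next I would apply Theorem \ref{mc-ravg-thm} to the summed structure maps $\mu + \mu'$, $l_M + l'_M$, $r_M + r'_M$ and $P + P'$. That theorem states that $M_{l_M + l'_M, r_M + r'_M} \xrightarrow{P+P'} A_{\mu + \mu'}$ is a relative averaging algebra if and only if its associated element is a Maurer-Cartan element of the untwisted $L_\infty$-algebra $(s^{-1}\mathfrak{h} \oplus \mathfrak{a}, \{l_k\}_{k=1}^\infty)$; by the previous paragraph this associated element is exactly $\alpha + \alpha'$. Thus the relative averaging algebra condition on the summed data is equivalent to $\alpha + \alpha'$ being a Maurer-Cartan element of $(s^{-1}\mathfrak{h} \oplus \mathfrak{a}, \{l_k\}_{k=1}^\infty)$.

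Finally, since $\alpha = (s^{-1}\Delta, P)$ is itself a Maurer-Cartan element of $(s^{-1}\mathfrak{h} \oplus \mathfrak{a}, \{l_k\}_{k=1}^\infty)$ by hypothesis (equivalently, by Theorem \ref{mc-ravg-thm} applied to the given relative averaging algebra), Remark \ref{rem-get} applies and gives that $\alpha + \alpha'$ is a Maurer-Cartan element of the untwisted $L_\infty$-algebra if and only if $\alpha' = (s^{-1}\Delta', P')$ is a Maurer-Cartan element of the twisted $L_\infty$-algebra $(s^{-1}\mathfrak{h} \oplus \mathfrak{a}, \{l_k^{(s^{-1}\Delta, P)}\}_{k=1}^\infty)$. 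Chaining the two equivalences proves the claim. I do not expect any serious obstacle here: the combinatorial content is already absorbed into Theorem \ref{mc-ravg-thm} and into the twisting statement of Remark \ref{rem-get}, so the only delicate point is the additivity of $\Delta$ in the structure data, which is immediate from (\ref{del-del}).
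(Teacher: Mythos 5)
Your proposal is correct and follows exactly the route the paper takes: the paper derives this theorem by combining Theorem \ref{mc-ravg-thm} with Remark \ref{rem-get}, the only implicit ingredient being the additivity of the assignment $(\mu, l_M, r_M) \mapsto \Delta$ from (\ref{del-del}), which you rightly single out and verify. Your write-up simply makes explicit the details the paper leaves to the reader.
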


The above theorem shows that the $L_\infty$-algebra $\big( s^{-1} \mathfrak{h} \oplus \mathfrak{a}, \{ l_k^{(s^{-1} \Delta, P)} \}_{k=1}^\infty \big)$ controlls the deformations of the relative averaging algebra $M_{ l_M, r_M } \xrightarrow{P} A_\mu$. For this reason, the $L_\infty$-algebra $\big( s^{-1} \mathfrak{h} \oplus \mathfrak{a}, \{ l_k^{(s^{-1} \Delta, P)} \}_{k=1}^\infty \big)$ is called the {\bf controlling algebra} for the given relative averaging algebra $M_{ l_M, r_M } \xrightarrow{P} A_\mu$.

\medskip

\begin{remark}\label{last-rem}
    Let $M \xrightarrow{P} A$ be a relative averaging algebra. Since the corresponding controlling algebra $\big( s^{-1} \mathfrak{h} \oplus \mathfrak{a}, \{ l_k^{(s^{-1} \Delta, P)} \}_{k=1}^\infty \big)$ is a $L_\infty$-algebra, it follows that $(l_1^{(s^{-1} \Delta, P)})^2 = 0$. We will use this fact in the construction of the cochain complex of the relative averaging algebra $M \xrightarrow{P} A.$
\end{remark}

%In the following, we define the cohomology of a relative averaging algebra. Let $M \xrightarrow{P} A$ be a relative averaging algebra with the Maurer-Cartan element $\alpha = (s^{-1} \pi, P) \in (s^{-1} W \oplus \mathfrak{a})_0$. Consider the $L_\infty$-algebra $\big( s^{-1} W' \oplus \mathfrak{a}, \{ l_k^{(s^{-1} \pi, P)} \}_{k=1}^\infty \big)$ twisted by the Maurer-cartan element $\alpha = (s^{-1} \pi, P)$. As a consequence of this structure, we have the following result.

%\begin{prop}
%The pair $\{ s^{-1} W' \oplus \mathfrak{a}, l_1^{(s^{-1} \pi, P)} \}$ is a cochain complex. In other words, the degree $1$ map $l_1^{(s^{-1} \pi, P)} : (s^{-1} W' \oplus \mathfrak{a})_n \rightarrow (s^{-1} W' \oplus \mathfrak{a})_{n+1}$ satisfies $( l_1^{(s^{-1} \pi, P)} )^2 = 0$.
%\end{prop}

\medskip

\subsection*{Cohomology of relative averaging algebras (with adjoint bimodule)} Here we will define the cohomology of a relative averaging algebra $M \xrightarrow{P} A$ (with coefficients in the adjoint bimodule). For each $n \geq 0$, we define an abelian group $C^n_\mathrm{rAvg} (M \xrightarrow{P} A)$ by
\begin{align*}
C^n_\mathrm{rAvg} (M \xrightarrow{P} A) = \begin{cases}
0 & \text{ if } n=0, \\ \medskip 
\mathrm{Hom}(A,A) \oplus \mathrm{Hom}(M, M) & \text{ if } n=1, \\ \medskip 
\mathrm{Hom}( A^{\otimes n}, A) \oplus \mathrm{Hom}( \mathcal{A}^{n-1, 1}, M) \oplus \mathrm{Hom}({\bf k}[Y_{n-1}] \otimes M^{\otimes n-1}, A) 
& \text{ if } n \geq 2.
\end{cases}
\end{align*}
Before we define the coboundary map $\delta_\mathrm{rAvg} : C^n_\mathrm{rAvg} (M \xrightarrow{P} A) \rightarrow C^{n+1}_\mathrm{rAvg} (M \xrightarrow{P} A)$, we observe the following. First, there is an embedding
$\mathrm{Hom} ((A \oplus M)^{\otimes n}, A \oplus M) \hookrightarrow \mathrm{Hom} ({\bf k}[Y_n] \otimes (A \oplus M)^{\otimes n}, A \oplus M),~ f \mapsto \widetilde{f},$ where $\widetilde{f}$ is given by
\begin{align*}
\widetilde{f} (y; x_1, \ldots, x_n) =  f(x_1, \ldots, x_n), \text{ for all } y \in Y_n \text{ and } x_1, \ldots, x_n \in A \oplus M.
\end{align*}
With this, the classical Gerstenhaber bracket $[~,~]_\mathsf{G}$ on the graded space $\oplus_{n=1}^\infty \mathrm{Hom} ((A \oplus M)^{\otimes n}, A \oplus M)$ embedds into the Majumdar-Mukherjee bracket $[~,~]_\mathsf{MM}.$ When we restrict the above embedding, we obtain embeddings
\begin{align*}
\mathrm{Hom}(A^{\otimes n}, A) & \hookrightarrow \mathrm{Hom}({\bf k} [Y_n] \otimes A^{\otimes n}, A), ~ f \mapsto \widetilde{f}, \\ \mathrm{Hom}( \mathcal{A}^{n-1, 1}, M)& \hookrightarrow \mathrm{Hom}( {\bf k}[Y_n] \otimes \mathcal{A}^{n-1, 1}, M), ~g \mapsto \widetilde{ g}. 
\end{align*}
Note that an element $(f,g) \in C^1_\mathrm{rAvg} (M \xrightarrow{P} A) = \mathrm{Hom} (A,A) \oplus \mathrm{Hom}(A,A)$ can be identified with the element $(s^{-1} (\widetilde{f} + \widetilde{g}), 0) \in (s^{-1} \mathfrak{h} \oplus \mathfrak{a})_{-1}$. Here we assume that $\mathfrak{a}_{-1} = 0$. Similarly, an element $(f,g, \gamma) \in C^{n \geq 2}_\mathrm{rAvg} (M \xrightarrow{P} A)$ can be identified with the element $(s^{-1} (\widetilde{f} + \widetilde{g}), \gamma) \in (s^{-1} \mathfrak{h} \oplus \mathfrak{a})_{n-2}$.
%
%\end{align*}
%Note that, for $n \geq 1$, the space $C^n_\mathrm{rAvg} (M \xrightarrow{P} A)$ can be identified with $(s^{-1} \mathfrak{h} \oplus \mathfrak{a})_{n-2}$ via
%\begin{align*}
%    C^1_\mathrm{rAvg} (M \xrightarrow{P} A) = \mathrm{Hom}(A,A) \oplus \mathrm{Hom}(M,M) \ni (f,g) \leftrightsquigarrow ~& (s^{-1} (f+g), 0) \in (s^{-1} \mathfrak{h} \oplus \mathfrak{a})_{-1},\\
 %    C^{n \geq 2}_\mathrm{rAvg} (M \xrightarrow{P} A) \ni (f,g, \gamma) %\leftrightsquigarrow  ~& (s^{-1} (f+g), \gamma) \in (s^{-1} \mathfrak{h} \oplus \mathfrak{a})_{n-2}.
%\end{align*}
%Here we assume that $\mathfrak{a}_{-1} = 0$. 
Using the above identifications, we now define a map $\delta_\mathrm{rAvg} : C^n_\mathrm{rAvg} (M \xrightarrow{P} A) \rightarrow C^{n+1}_\mathrm{rAvg} (M \xrightarrow{P} A)$ by
\begin{align*}
    \delta_\mathrm{rAvg} ((f,g)) =~& - l_1^{(s^{-1} \Delta, P)} (    s^{-1} (\widetilde{f}+\widetilde{g}), 0), \text{ for } (f,g) \in C^1_\mathrm{rAvg} (M \xrightarrow{P} A), \\
     \delta_\mathrm{rAvg} ((f,g, \gamma)) =~& (-1)^{n-2} l_1^{(s^{-1} \Delta, P)} (    s^{-1} (\widetilde{f}+\widetilde{g}), \gamma), \text{ for } (f,g,h) \in C^{n \geq 2}_\mathrm{rAvg} (M \xrightarrow{P} A).
\end{align*}
%We also define a map $\delta_\mathrm{avg} : C^n_\mathrm{avg} (M \xrightarrow{P} A) \rightarrow C^{n+1}_\mathrm{avg} (M \xrightarrow{P} A)$ by
%\begin{align*}
%\delta_\mathrm{avg} (f, \theta) = (-1)^{n-2} ~ l_1^{(s^{-1} \pi, P)} (s^{-1}f, \theta), \text{ for } (f, \theta) \in C^n_\mathrm{avg} (M \xrightarrow{P} A) \textcolor{red}{????}.
%\end{align*}
It follows from Remark \ref{last-rem} that $(\delta_\mathrm{rAvg})^2 = 0$. In other words,  $\{ C^\bullet_\mathrm{rAvg} (M \xrightarrow{P} A), \delta_\mathrm{rAvg} \}$ is a cochain complex. The corresponding cohomology is called the {\bf cohomology} of the relative averaging algebra $M \xrightarrow{P} A$. We denote the corresponding $n$-th cohomology group by $H^n_\mathrm{rAvg} (M \xrightarrow{P} A)$.

Note that
\begin{align}\label{ident-diff}
    &\delta_\mathrm{rAvg} ((f, g, \gamma)) \nonumber \\
    &= (-1)^{n-2} l_1^{(s^{-1} \Delta, P)} (s^{-1} (\widetilde{f} + \widetilde{g}), \gamma )  \nonumber \\
    &= (-1)^{n-2} \sum_{k=0}^\infty \frac{1}{k!} l_{k+1} \big( \underbrace{  (s^{-1} \Delta, P), \ldots, (s^{-1} \Delta, P)}_{k \text{ times}},  (s^{-1} (\widetilde{f} + \widetilde{g}), \gamma )    \big)  \nonumber \\
    &= (-1)^{n-2} \bigg\{  l_2 \big(   (s^{-1} \Delta, 0) , (s^{-1} (\widetilde{f} + \widetilde{g}), 0) \big)   + l_3 \big(  (s^{-1} \Delta, 0), (0,P), (0, \gamma)  \big)  \nonumber  \\
    & \qquad \qquad \quad + \frac{1}{n !} l_{n+1} (  (s^{-1} (\widetilde{f} + \widetilde{g}), 0 ), \underbrace{(0, P), \ldots, (0, P)}_{n \text{ times}}   )   \bigg\} \quad (\text{as the other terms get vanished})  \nonumber \\
    &= (-1)^{n-2} \bigg(  -s^{-1} [\Delta, \widetilde{f} + \widetilde{g}]_\mathsf{MM} ~,~ [[\Delta, P]_\mathsf{MM}, \gamma ]_\mathsf{MM} + \frac{1}{n!} \underbrace{[ \cdots [[}_{n \text{ times}}  \widetilde{f} + \widetilde{g}, P  ]_\mathsf{MM}, P]_\mathsf{MM}, \ldots, P]_\mathsf{MM}  \bigg)  \nonumber \\
    &= \bigg(  (-1)^{n-1} s^{-1} [\Delta, \widetilde{f} + \widetilde{g}]_\mathsf{MM} ~,~ \underbrace{(-1)^{n}[[\Delta, P]_\mathsf{MM}, \gamma ]_\mathsf{MM}}_{= \delta_\mathrm{Diass}^P (\gamma)} + \underbrace{\frac{(-1)^n}{n!} [ \cdots [[  \widetilde{f} + \widetilde{g}, P  ]_\mathsf{MM}, P]_\mathsf{MM}, \ldots, P]_\mathsf{MM}}_{= h_P (f,g) \text{~ (say)}}  \bigg).
\end{align}
Using the above identifications, the term (\ref{ident-diff}) (which lies is $(s^{-1} \mathfrak{h} \oplus \mathfrak{a})_{n-1}$) can be identified with the element
\begin{align*}
    \big(  (-1)^{n-1} [\mu, f ]_\mathsf{G}~,~ (-1)^{n-1} [\mu + l_M + r_M , f + g ]_\mathsf{G}~, ~\delta_\mathrm{Diass}^P (\gamma) + h_P (f, g)   \big) \in C^{n+1}_\mathrm{rAvg} (M \xrightarrow{P} A).
\end{align*}
Here the first component $(-1)^{n-1} [\mu , f]_\mathsf{G}$ is nothing but $\delta_\mathrm{Hoch}(f)$, where $\delta_\mathrm{Hoch}$ is the Hochschild coboundary operator of the associative algebra $A$ with coefficients in the adjoint $A$-bimodule. We denote the second component $(-1)^{n-1} [\mu + l_M + r_M , f + g ]_\mathsf{G} \in \mathrm{Hom}(\mathcal{A}^{n,1}, M)$ by the notation $\delta_\mathrm{Hoch}^f (g)$ and it is given by
\begin{align*}
    \big( \delta_\mathrm{Hoch}^f (g) \big)&(a_1, \ldots, a_{n+1}) = a_1 \cdot_M (f+g)(a_2 , \ldots , a_{n+1}) \\
   & + \sum_{i=1}^n (-1)^i g (a_1, \ldots, a_{i-1}, (\mu + \cdot_M) (a_i, a_{i+1}), \ldots, a_{n+1})  + (-1)^{n+1} (f+g)(a_1, \ldots, a_{n}) \cdot_M a_{n+1},
\end{align*}
for $a_1 \otimes \cdots \otimes a_{n+1} \in \mathcal{A}^{n,1}$ (i.e. all $a_i$'s are from $A$ except one, which is from $M$).
Finally, to better understand the term $h_P (f,g)$, we first realize an element of $\mathrm{Hom} ({\bf k}[Y_l] \otimes (A \oplus M)^{\otimes l}, A \oplus M)$ as a degree $(l-1)$ coderivation on the free dendriform coalgebra $\oplus_{n=1}^\infty {\bf k}[Y_n] \otimes (s^{-1} A \oplus s^{-1} M)^{\otimes n}$. See \cite{val-manin} for details. With this identification, the Majumdar-Mukherjee bracket can be seen as the commutator bracket of coderivations on the dendriform coalgebra $\oplus_{n=1}^\infty {\bf k}[Y_n] \otimes (s^{-1} A \oplus s^{-1} M)^{\otimes n}$. Hence, for any $y \in Y_n$ (say $y = y_1 \vee y_2$ for some unique $(i-1)$-tree $y_1 \in Y_{i-1}$ and $(n-i)$-tree $y_2 \in Y_{n-i}$) and $u_1, \ldots, u_n \in M$,
\begin{align*}
   & (h_P (f,g)) (y; u_1, \ldots, u_n) \\
   &= \frac{(-1)^n}{n!} \sum_{j=0}^n (-1)^j \binom{n}{j} \big(  \underbrace{  P \circ \cdots \circ P}_{j \text{ times}} \circ (\widetilde{f} + \widetilde{g}) \circ  \underbrace{  P \circ \cdots \circ P}_{(n-j) \text{ times}} \big) (y; u_1, \ldots, u_n) \\
   & = \frac{(-1)^n}{n!} \bigg\{ n! f \big(  P(u_1), \ldots, P(u_n)  \big)  - n (n-1)! P g \big( P(u_1), \ldots, u_i, \ldots, P(u_n)   \big)  \bigg\} \\
   & =  (-1)^n \bigg\{ f \big(  P(u_1), \ldots, P(u_n)  \big)  - P g \big( P(u_1), \ldots, P(u_{i-1}), u_i,  P(u_{i+1}), \ldots, P(u_n)   \big) \bigg\}.
\end{align*}
Hence the coboundary map $\delta_\mathrm{rAvg}$ is given by $\delta_\mathrm{rAvg} ((f,g, \gamma)) = \big( \delta_\mathrm{Hoch} (f), \delta_\mathrm{Hoch}^f (g) , \delta_\mathrm{Diass}^P (\gamma) + h_P (f,g)  
 \big)$, for $(f, g, \gamma) \in C^n_\mathrm{rAvg} (M \xrightarrow{P} A)$.

\medskip

Let $M \xrightarrow{P} A$ be a relative averaging algebra. In the following, we construct a long exact sequence that connects the cohomology of the operator $P$ and the cohomology of the full relative averaging algebra $M \xrightarrow{P} A.$ We first consider a new cochain complex $\{ C^\bullet_\mathrm{AssBimod} ( {}^AM^A,  {}^AM^A) , \delta_\mathrm{AssBimod} \}$, where
\begin{align*}
    C^0_\mathrm{AssBimod} ( {}^AM^A,  {}^AM^A) = 0 ~~~ \text{ and } ~~~ C^{n \geq 1}_\mathrm{AssBimod} ( {}^AM^A,  {}^AM^A) = \mathrm{Hom}(A^{\otimes n}, A) \oplus \mathrm{Hom}(\mathcal{A}^{n-1,1}, M).
\end{align*}
The coboundary map $\delta_\mathrm{AssBimod}$ is given by 
\begin{align*}
\delta_\mathrm{AssBimod} ((f,g)) = (\delta_\mathrm{Hoch} (f), \delta_\mathrm{Hoch}^f (g)), \text{ for } (f,g) \in C^{n \geq 1}_\mathrm{AssBimod} ( {}^AM^A,  {}^AM^A). 
\end{align*}
We denote the $n$-th cohomology of this complex by $H^{n}_\mathrm{AssBimod} ( {}^AM^A,  {}^AM^A)$. Since this cohomology captures precisely the information of the associative algebra $A$ and the $A$-bimodule $M$, we call this cohomology the cohomology of the associative bimodule ${}^A M^{A}$ (i.e. associative algebra $A$ together with the $A$-bimodule $M$).

\begin{thm}\label{long-thm}
    Let $M \xrightarrow{P} A$ be a relative averaging algebra. Then there is a long exact sequence
    \begin{align}\label{long-long}
        \cdots \rightarrow H^{n-1}_P (M, A) \rightarrow H^n_\mathrm{rAvg} (M \xrightarrow{P} A) \rightarrow    H^n_\mathrm{AssBimod} ( {}^AM^A,  {}^AM^A) \rightarrow  H^{n}_P (M, A) \rightarrow \cdots 
    \end{align}
\end{thm}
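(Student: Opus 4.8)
The plan is to realize the three complexes as the terms of a short exact sequence of cochain complexes and then invoke the zig--zag (snake) lemma. First I would set up
\[
0 \longrightarrow \big( CY^{\bullet - 1}(M, A), \delta_\mathrm{Diass}^P \big) \xrightarrow{~\iota~} \big( C^\bullet_\mathrm{rAvg}(M \xrightarrow{P} A), \delta_\mathrm{rAvg} \big) \xrightarrow{~\pi~} \big( C^\bullet_\mathrm{AssBimod}({}^AM^A, {}^AM^A), \delta_\mathrm{AssBimod} \big) \longrightarrow 0,
\]
where $\iota(\gamma) = (0, 0, \gamma)$ places an operator cochain in the third slot of $C^\bullet_\mathrm{rAvg}$ and $\pi(f, g, \gamma) = (f, g)$ forgets the operator part. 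For $n \ge 2$ this is degreewise split exact, since there $C^n_\mathrm{rAvg} = C^n_\mathrm{AssBimod} \oplus CY^{n-1}(M,A)$ by the very definition of the cochain groups.

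The crux of the verification is that $\iota$ and $\pi$ are chain maps, and here the explicit shape of $\delta_\mathrm{rAvg}$ does all the work. Using
\[
\delta_\mathrm{rAvg}\big( (f, g, \gamma) \big) = \big( \delta_\mathrm{Hoch}(f),~ \delta_\mathrm{Hoch}^f(g),~ \delta_\mathrm{Diass}^P(\gamma) + h_P(f, g) \big),
\]
I observe that the first two components are independent of $\gamma$, so $\pi \circ \delta_\mathrm{rAvg} = \delta_\mathrm{AssBimod} \circ \pi$ is immediate; and $\delta_\mathrm{rAvg}(0, 0, \gamma) = (0, 0, \delta_\mathrm{Diass}^P(\gamma))$ because $h_P(0,0) = 0$, so $\iota$ is a chain map onto the subcomplex cut out by the third slot.

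With the short exact sequence in hand, the zig--zag lemma delivers a long exact sequence relating $H^\bullet_\mathrm{rAvg}$, $H^\bullet_\mathrm{AssBimod}$ and the cohomology $H^\bullet(K)$ of the subcomplex $K^\bullet = \iota\big(CY^{\bullet-1}(M,A)\big)$. The next step is to identify $H^n(K)$ with $H^{n-1}_P(M, A)$: the induced differential on $K$ is $\delta_\mathrm{Diass}^P$, and Proposition \ref{n-mult} states $d_P = (-1)^\bullet \delta_\mathrm{Diass}^P$, so the two differentials agree up to a degreewise sign and hence compute the same cohomology, namely $H^{\bullet - 1}_P(M, A)$. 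I would also read off the connecting homomorphism from the snake-lemma recipe: lifting a cocycle $(f, g) \in C^n_\mathrm{AssBimod}$ to $(f, g, 0)$ and applying $\delta_\mathrm{rAvg}$ yields $(0, 0, h_P(f, g))$ by the cocycle condition, whence $\partial\big[(f, g)\big] = \big[ h_P(f, g) \big]$; thus the coupling term $h_P$ is precisely the connecting map.

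The hard part will be the bottom of the complexes, not any individual computation. The operator complex $CY^\bullet(M, A)$ begins in degree $0$ with $CY^0(M, A) = A$, while $C^1_\mathrm{rAvg}$ carries no operator slot; thus $\iota$ is forced to vanish in degree $1$ and $K^\bullet$ is really the truncation with $K^0 = K^1 = 0$. The identification $H^n(K) \cong H^{n-1}_P(M, A)$ is therefore transparent only for $n$ sufficiently large, and the lowest terms of the sequence (those involving $H^0_P$ and $H^1_P$) will have to be checked by hand. The only other care needed is in the signs: one must match the internal signs of $\delta_\mathrm{rAvg}$ arising from the twisted differential $l_1^{(s^{-1}\Delta, P)}$ with those of Proposition \ref{n-mult}, so that $\iota$ and $\pi$ are honest chain maps and the pieces splice together into the displayed sequence.
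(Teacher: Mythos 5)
Your proof is correct and is essentially the paper's own argument: the paper likewise exhibits the short exact sequence of cochain complexes
\[
0 \rightarrow \{ CY^{\bullet -1} (M_P, A), \delta^P_\mathrm{Diass} \} \rightarrow \{ C^\bullet_\mathrm{rAvg} (M \xrightarrow{P} A), \delta_\mathrm{rAvg} \} \rightarrow \{ C^\bullet_\mathrm{AssBimod} ( {}^AM^A, {}^AM^A), \delta_\mathrm{AssBimod} \} \rightarrow 0
\]
with the ``obvious maps'' and invokes the induced long exact sequence. Your additional verifications --- that $\iota$ and $\pi$ are chain maps via the explicit form of $\delta_\mathrm{rAvg}$, that the connecting map is $[(f,g)] \mapsto [h_P(f,g)]$, and that the degree $n \le 1$ terms require truncating the operator complex --- only make explicit details the paper leaves implicit.
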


\begin{proof}
    Note that there is a short exact sequence of cochain complexes
    \begin{align*}
        0 \rightarrow \{ CY^{\bullet -1} (M_P, A), \delta^P_\mathrm{Diass} \} \xrightarrow{} \{ C^\bullet_\mathrm{rAvg} (M \xrightarrow{P} A), \delta_\mathrm{rAvg} \} \xrightarrow{} \{ C^\bullet_\mathrm{AssBimod} ( {}^AM^A, {}^AM^A), \delta_\mathrm{AssBimod} \} \rightarrow 0
    \end{align*}
    with obvious maps between complexes.
    This short exact sequence induces the long exact sequence (\ref{long-long}) on the cohomology groups.
\end{proof}

\medskip

\medskip

\noindent {\bf Cohomology of an averaging algebra (with coefficients in the adjoint bimodule).} Let $A \xrightarrow{P} A$ be an averaging algebra. For each $n \geq 0$, we define the space $C^n_\mathrm{Avg} (A \xrightarrow{P} A)$ of $n$-cochains by 
\begin{align*}
C^n_\mathrm{Avg} (A \xrightarrow{P} A) = \begin{cases}
0 & \text{ if } n = 0,\\
\mathrm{Hom} (A, A) & \text{ if } n=1,\\
\mathrm{Hom} (A^{\otimes n}, A) \oplus \mathrm{Hom} ({\bf k}[Y_{n-1}] \otimes A^{\otimes n-1}, A) & \text{ if } n \geq 2.
\end{cases}
\end{align*}
Then there is an embedding ${i} : C^n_\mathrm{Avg} (A \xrightarrow{P} A) \hookrightarrow C^n_\mathrm{rAvg} (A \xrightarrow{P} A)$ given by
\begin{align*}
i (f) =~& (f,f), \text{ for } f \in C^1_\mathrm{Avg} (A \xrightarrow{P} A),\\
{i} (f, \gamma) =~& (f, f, \gamma), \text{ for } (f, \gamma) \in C^{n \geq 2}_\mathrm{Avg} (A \xrightarrow{P} A).
\end{align*}
Let $(f, \gamma) \in C^{n}_\mathrm{Avg} (A \xrightarrow{P} A)$. Here we assume that $\gamma = 0$ when $n =1$. Then
\begin{align*}
    \delta_\mathrm{rAvg} \big(  i(f, \gamma) \big) = \delta_\mathrm{rAvg} \big( (f,f,\gamma) \big) = \big(  \delta_\mathrm{Hoch} (f) , \underbrace{\delta_\mathrm{Hoch}^f (f)}_{= \delta_\mathrm{Hoch} (f)} , \delta_\mathrm{Diass}^P (\gamma) + h_P (f,f)  \big) \in \mathrm{im} (i).
\end{align*}
This shows that the map $\delta_\mathrm{rAvg} : C^n_\mathrm{rAvg} (A \xrightarrow{P} A) \rightarrow C^{n+1}_\mathrm{rAvg} (A \xrightarrow{P} A)$ restricts to a map 
\begin{align*}
\delta_\mathrm{Avg} : C^n_\mathrm{Avg} (A \xrightarrow{P} A) \rightarrow C^{n+1}_\mathrm{Avg} (A \xrightarrow{P} A)
\end{align*}
that satisfies $\delta_\mathrm{rAvg} \circ i = i \circ \delta_\mathrm{Avg}$. Explicity, the map $\delta_\mathrm{Avg}$ is given by
\begin{align*}
    \delta_\mathrm{Avg} ((f, \gamma)) = (\delta_\mathrm{Hoch} (f) , \delta_\mathrm{Diass}^P (\gamma) + h_P (f,f) ), \text{ for } (f, \gamma) \in C^n_\mathrm{Avg} (A \xrightarrow{P} A).
\end{align*}
It follows from the condition $(\delta_\mathrm{rAvg})^2 = 0$ that the map $\delta_\mathrm{Avg}$ is also a differential (i.e. $(\delta_\mathrm{Avg})^2 = 0$). Hence $\{ C^\bullet_\mathrm{Avg} (A \xrightarrow{P} A), \delta_\mathrm{Avg} \}$ is a cochain complex. The corresponding cohomology is called the {\bf cohomology} of the averaging algebra $A \xrightarrow{P} A$. We denote the $n$-th cohomology group by $H^n_\mathrm{Avg} (A \xrightarrow{P} A)$.

\medskip

%The next result 

%\begin{prop}
%With the above notations, $\{ C^\bullet_\mathrm{avg} (\textcolor{red}{A, P}), \delta_\mathrm{avg} \}$ is a subcomplex of the cochain complex $\{ C^\bullet_\mathrm{avg} (A \xrightarrow{P} A, A \xrightarrow{P} A), \delta_\mathrm{avg}  \}. $
%\end{prop}

%\begin{proof}
%\textcolor{red}{Must write}
%\end{proof}

%It follows from the above proposition that $\{ C^\bullet_\mathrm{avg} (\textcolor{red}{A, P}), \delta_\mathrm{avg} \}$ is a cochain complex. The corresponding cohomology groups are called the cohomology of the relative averaging algebra $(\textcolor{red}{A,P}).$

The next result shows that the cohomology of an averaging algebra fits into a long exact sequence. This is a particular case of Theorem \ref{long-thm}.

\begin{thm}
    Let $A \xrightarrow{P} A$ be an averaging algebra. Then there is a long exact sequence
    \begin{align*}
        \ldots \rightarrow H^{n-1}_P (A, A) \rightarrow H^n_\mathrm{Avg} (A \xrightarrow{P} A) \rightarrow H^n_\mathrm{Hoch} (A, A) \rightarrow H^n_P (A,A) \rightarrow \cdots.
    \end{align*}
    Here $H^n_P (A,A) $ is the $n$-th cohomology group of the averaging operator $P$ and $H^n_\mathrm{Hoch}(A,A)$ is the $n$-th Hochschild cohomology group of the associative algebra $A$.
\end{thm}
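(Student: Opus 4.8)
The plan is to imitate the proof of Theorem~\ref{long-thm}, producing a short exact sequence of cochain complexes whose associated long exact sequence is exactly the one asserted. The complex $\{ C^\bullet_\mathrm{Avg} (A \xrightarrow{P} A), \delta_\mathrm{Avg} \}$ is already built as an extension of two pieces, since in degree $n \geq 2$ it reads $\mathrm{Hom}(A^{\otimes n}, A) \oplus \mathrm{Hom}({\bf k}[Y_{n-1}] \otimes A^{\otimes n-1}, A)$ with differential $\delta_\mathrm{Avg}((f, \gamma)) = (\delta_\mathrm{Hoch}(f), \delta^P_\mathrm{Diass}(\gamma) + h_P(f,f))$. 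First I would single out the cochains of the form $(0, \gamma)$, i.e. those with vanishing Hochschild component. Because $h_P(0,0) = 0$, we get $\delta_\mathrm{Avg}((0,\gamma)) = (0, \delta^P_\mathrm{Diass}(\gamma))$, so these form a subcomplex whose differential is precisely $\delta^P_\mathrm{Diass}$. As the second summand in degree $n$ is $\mathrm{Hom}({\bf k}[Y_{n-1}] \otimes A^{\otimes n-1}, A) = CY^{n-1}(A_P, A)$, this subcomplex is the once-shifted cochain complex $\{ CY^{\bullet-1}(A_P, A), \delta^P_\mathrm{Diass}\}$ of the averaging operator $P$, whose cohomology is $H^{\bullet-1}_P(A,A)$.

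Next I would take the quotient. Projecting onto the first component, $(f, \gamma) \mapsto f$, gives a degreewise surjection onto $\mathrm{Hom}(A^{\otimes n}, A)$; since the first component of $\delta_\mathrm{Avg}$ is $\delta_\mathrm{Hoch}(f)$, this projection is a cochain map whose target is the Hochschild cochain complex of $A$ with coefficients in the adjoint bimodule, with cohomology $H^\bullet_\mathrm{Hoch}(A,A)$. Assembling these yields the short exact sequence of cochain complexes
\begin{align*}
0 \rightarrow \{ CY^{\bullet-1}(A_P, A), \delta^P_\mathrm{Diass}\} \rightarrow \{ C^\bullet_\mathrm{Avg}(A \xrightarrow{P} A), \delta_\mathrm{Avg}\} \rightarrow \{ \mathrm{Hom}(A^{\otimes \bullet}, A), \delta_\mathrm{Hoch}\} \rightarrow 0,
\end{align*}
with the inclusion $\gamma \mapsto (0,\gamma)$ and the projection $(f,\gamma) \mapsto f$. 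This is the averaging-algebra analogue of the short exact sequence used for Theorem~\ref{long-thm}, which is the sense in which the present result is a particular case.

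Finally, the zigzag lemma applied to this short exact sequence produces the long exact sequence in cohomology: the middle terms are $H^n_\mathrm{Avg}(A \xrightarrow{P} A)$, the left terms are $H^{n-1}_P(A,A)$ (owing to the degree shift in the subcomplex), and the right terms are $H^n_\mathrm{Hoch}(A,A)$, while the connecting homomorphism lands in $H^{(n+1)-1}_P(A,A) = H^n_P(A,A)$, giving exactly the stated sequence. I expect no serious obstacle: the only points needing verification are that the inclusion and the projection intertwine the differentials, and both follow immediately from the componentwise formula for $\delta_\mathrm{Avg}$ recorded just above the statement. The one place to stay careful is the identification of the subcomplex's cohomology with $H^{\bullet-1}_P(A,A)$, where the relation between $\delta^P_\mathrm{Diass}$ and $d_P$ established in Proposition~\ref{n-mult} guarantees that the sign discrepancy does not affect the cohomology groups.
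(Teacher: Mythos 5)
Your proof is correct and follows essentially the same route as the paper: the paper deduces this theorem as a particular case of Theorem \ref{long-thm}, whose proof is exactly the short exact sequence of cochain complexes you write down (specialized from the relative to the absolute averaging setting), followed by the zigzag lemma.
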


\medskip

\subsection*{Cohomology of relative averaging algebras (with arbitrary bimodule)} Here we will introduce the cohomology of a relative averaging algebra with coefficients in a bimodule. We will use this cohomology in Section \ref{sec-7} to study abelian extensions.

%In the following, we will generalize the previous construction in the presence of a bimodule.

Let $M \xrightarrow{P} A$ be a relative averaging algebra and $(N \xrightarrow{Q} B, l, r)$ be a bimodule over it. For each $n \geq 0$, we define the space of $n$-cochains $C^n_\mathrm{rAvg} (M \xrightarrow{P} A; N \xrightarrow{Q} B )$ by
\begin{align*}
C^n_\mathrm{rAvg} (M \xrightarrow{P} A; N \xrightarrow{Q} B) = \begin{cases}
0 & \text{if } n=0, \\ \medskip 
\mathrm{Hom}(A,B) \oplus \mathrm{Hom}(M, N) & \text{if } n=1, \\ \medskip 
\mathrm{Hom}(A^{\otimes n}, B) \oplus \mathrm{Hom}(\mathcal{A}^{n-1, 1}, N) \oplus \mathrm{Hom}({\bf k}[Y_{n-1}] \otimes M^{\otimes n-1}, B) & \text{if } n \geq 2.
\end{cases}
\end{align*}
To define the coboundary map, we first consider the cochain complex $\{  C^\bullet_\mathrm{rAvg} (M \oplus N \xrightarrow{P \oplus Q} A \oplus B), \delta_\mathrm{rAvg} \}$ of the semidirect product relative averaging algebra $M \oplus N \xrightarrow{P \oplus Q} A \oplus B$ (given in Theorem \ref{thm-semid}) with coefficients in the adjoint bimodule. Then for each $n \geq 0$, there is an obvious inclusion
\begin{align*}
C^n_\mathrm{rAvg} (M \xrightarrow{P} A; N \xrightarrow{Q} B) \hookrightarrow C^n_\mathrm{rAvg} (M \oplus N \xrightarrow{P \oplus Q} A \oplus B).
\end{align*}
Moreover, the map $\delta_\mathrm{rAvg} : C^n_\mathrm{rAvg} (M \oplus N \xrightarrow{P \oplus Q} A \oplus B) \rightarrow C^{n+1}_\mathrm{rAvg} (M \oplus N \xrightarrow{P \oplus Q} A \oplus B)$ restricts to a map (denoted by the same notation) $\delta_{\mathrm{rAvg}} : C^n_\mathrm{rAvg} (M \xrightarrow{P} A; N \xrightarrow{Q} B) \rightarrow C^{n+1}_\mathrm{rAvg} (M \xrightarrow{P} A; N \xrightarrow{Q} B)$. Hence $\{ C^\bullet_\mathrm{rAvg} (M \xrightarrow{P} A; N \xrightarrow{Q} B), \delta_\mathrm{rAvg} \}$ becomes a cochain complex. Note that the restricted map $\delta_\mathrm{rAvg}$ is explicitly given by
%We define a map $\delta_\mathrm{rAvg} : C^n_\mathrm{rAvg} (M \xrightarrow{P} A; N \xrightarrow{Q} B) \rightarrow C^{n+1}_\mathrm{rAvg} (M \xrightarrow{P} A; N \xrightarrow{Q} B)$ by
\begin{align*}
\delta_\mathrm{rAvg} ((f, g, \gamma )) = \big(  \delta_\mathrm{Hoch} (f) , \delta_\mathrm{Hoch}^f (g) , \delta_\mathrm{Diass}^P (\gamma) + h_{P, Q} (f, g)  \big),
\end{align*}
for $(f,g, \gamma) \in C^n_\mathrm{rAvg} (M \xrightarrow{P} A; N \xrightarrow{Q} B)$. Here $\delta_\mathrm{Hoch}$ is the Hochschild coboundary operator of the associative algebra $A$ with coefficients in the $A$-bimodule $B$, and for any $f \in \mathrm{Hom}(A^{\otimes n}, B)$, the map $\delta_\mathrm{Hoch}^f : \mathrm{Hom}(\mathcal{A}^{n-1, 1}, N) \rightarrow \mathrm{Hom}(\mathcal{A}^{n, 1}, N)$ is given by
\begin{align*}
\big( \delta_\mathrm{Hoch}^f (g) \big) (a_1, \ldots, a_{n+1}) =~& (l + \cdot_N) (a_1, (f+g) (a_2, \ldots, a_{n+1})) \\~&+ \sum_{i=1}^n (-1)^i g \big(  a_1, \ldots, a_{i-1} , (\mu + l_M + r_M) (a_i, a_{i+1}), \ldots, a_{n+1}  \big) \\~&+ (-1)^{n+1} (r + \cdot_N) ( (f + g) (a_1, \ldots, a_n), a_{n+1}),
\end{align*}
for $g \in \mathrm{Hom}(\mathcal{A}^{n,1}, N)$ and $a_1 \otimes \cdots \otimes a_{n+1} \in \mathcal{A}^{n,1}$.
The map $\delta_\mathrm{Diass}^P$ is the coboundary operator of the induced diassociative algebra $M_P$ with coefficients in the representation $B$ (given in Proposition \ref{aa-b}). Finally, the map $h_{P, Q} (f,g)$ is given by
\begin{align*}
    (h_{P, Q} (f,g)) (y; u_1, \ldots, u_n ) = (-1)^n \big(  f (P(u_1), \ldots, P(u_n)) - Qg (  P(u_1), \ldots, u_i, \ldots, P(u_n) ) \big), 
\end{align*}
for $y \in Y_n$ (which can be uniquely  written as $y = y_1 \vee y_2$ for some $(i-1)$-tree $y_1$ and $(n-i)$-tree $y_2$) and $u_1, \ldots, u_n \in M$.

%\begin{prop}
%The map $\delta_\mathrm{rAvg}$ satisfies $(\delta_\mathrm{rAvg})^2 = 0$. 
%In other words, $\{  C^\bullet_\mathrm{avg}  (M \xrightarrow{P} A; N \xrightarrow{Q} B) , \delta_\mathrm{avg} \}$ is a cochain complex.
%\end{prop}

%\begin{proof}
%\textcolor{red}{Must Write.}
%\end{proof}

The cohomology of the complex $\{ C^\bullet_\mathrm{rAvg} (M \xrightarrow{P} A; N \xrightarrow{Q} B), \delta_\mathrm{rAvg} \}$ is called the {\bf cohomology} of the relative averaging algebra $M \xrightarrow{P} A$ with coefficients in the bimodule $(N \xrightarrow{Q} B, l, r).$ We denote the $n$-th cohomology group by $H^n_\mathrm{rAvg} (M \xrightarrow{P} A; N \xrightarrow{Q} B)$.

\medskip

%Let $(A, P)$ be an averaging algebra and $(M, Q)$ be a bimodule over it. This is equivalent to the fact that $A \xrightarrow{P} A$ is a relative averaging algebra and $(M \xrightarrow{Q} M, \textcolor{red}{LEFT}, \textcolor{red}{LEFT})$ is bimodule over it (see Example \textcolor{red}{REF}). Thus, we may consider the cochain complex $\{ C^\bullet_\mathrm{avg} (A \xrightarrow{P} A, M \xrightarrow{Q} M), \delta_\mathrm{avg} \}$, where .... \textcolor{red}{khata}.

%\noindent The corresponding cohomology groups are called the cohomology of the averaging algebra $(A, P)$ with coefficients in the bimodule $(M, Q)$.

\begin{remark}
    In Example \ref{avg-alg-bimod} we have seen that a bimodule over an averaging algebra can be seen as a bimodule over the corresponding relative averaging algebra. With this view, one can define the cohomology of an averaging algebra with coefficients in a bimodule over it.
\end{remark}

\medskip

%\medskip

%\section{Cohomology of relative averaging algebras}

%\medskip

\section{Deformations of relative averaging algebras}\label{sec-6}

In this section, we study formal and infinitesimal deformations of a relative averaging algebra in terms of the cohomology theory. In particular, we show that the set of all equivalence classes of infinitesimal deformations of a relative averaging algebra  $M \xrightarrow{P} A$ has a bijection with the second cohomology group $H^2_\mathrm{rAvg}(M \xrightarrow{P} A).$

Let $\mathsf{R}$ be a commutative unital ring with unity $1_\mathsf{R}$. An augmentation of $\mathsf{R}$ is a homomorphism $\varepsilon : \mathsf{R} \rightarrow {\bf k}$ satisfying $\varepsilon (1_\mathsf{R}) = 1_{\bf k}.$ Throughout this section, we assume that $\mathsf{R}$ is a commutative unital ring with an augmentation $\varepsilon$. Given such $\mathsf{R}$, one may always define the notion of $\mathsf{R}$-relative averaging algebra similar to Definition \ref{first-defn}(ii) by replacing the vector spaces and linear maps by $\mathsf{R}$-modules and $\mathsf{R}$-linear maps. In other words, a $\mathsf{R}$-relative averaging algebra is a relative averaging algebra in the category of $\mathsf{R}$-modules. Morphisms between $\mathsf{R}$-relative averaging algebras can be defined similarly. Note that any relative averaging algebra $M \xrightarrow{P} A$ can be regarded as a $\mathsf{R}$-relative averaging algebra, where the $\mathsf{R}$-module structures on $A$ and $M$ are respectively given by $r \cdot a = \varepsilon (r) a$ and $r \cdot u = \varepsilon (r) u$, for $r \in \mathsf{R}$, $a \in A$, $u \in M$.

\begin{defn}
A {\bf $\mathsf{R}$-deformation} of a relative averaging algebra $M \xrightarrow{P} A$ consists of a quadruple $(\mu_\mathsf{R}, l_\mathsf{R}, r_\mathsf{R}, P_\mathsf{R})$ of $\mathsf{R}$-bilinear  maps
\begin{center}
$\mu_\mathsf{R} : (\mathsf{R} \otimes_{\bf k} A) \times (\mathsf{R} \otimes_{\bf k} A) \rightarrow \mathsf{R} \times_{\bf k} A , \qquad l_\mathsf{R}: (\mathsf{R} \otimes_{\bf k} A) \times (\mathsf{R} \otimes_{\bf k} M) \rightarrow \mathsf{R} \otimes_{\bf k} M,$

\medskip

$r_\mathsf{R}: (\mathsf{R} \otimes_{\bf k} M) \times (\mathsf{R} \otimes_{\bf k} A) \rightarrow \mathsf{R} \otimes_{\bf k} M$ ~~ ~ and a $\mathsf{R}$-linear map $P_\mathsf{R} : \mathsf{R} \otimes_{\bf k} M \rightarrow \mathsf{R} \otimes_{\bf k} A$
\end{center}
such that the following conditions hold:

(i) $(\mathsf{R} \otimes_{\bf k} A, \mu_\mathsf{R})$ is an $\mathsf{R}$-associative algebra, $(\mathsf{R} \otimes_{\bf k} M, l_\mathsf{R}, r_\mathsf{R})$ a bimodule over it and the $\mathsf{R}$-linear map $P_R : \mathsf{R} \otimes_{\bf k} M \rightarrow \mathsf{R} \otimes_{\bf k} A$ is a relative averaging operator. In other words, $\mathsf{R} \otimes_{\bf k} M \xrightarrow{P_\mathsf{R}} \mathsf{R} \otimes_{\bf k} A$ is a $\mathsf{R}$-relative averaging algebra by considering the above structures on $\mathsf{R} \otimes_{\bf k} A$ and $\mathsf{R} \otimes_{\bf k} M$.

(ii) The pair $(\varepsilon \otimes_{\bf k} \mathrm{id}_A, \varepsilon \otimes_{\bf k} \mathrm{id}_M) :(\mathsf{R} \otimes_{\bf k} M \xrightarrow{P_\mathsf{R}} \mathsf{R} \otimes_{\bf k} A) \rightsquigarrow (M \xrightarrow{P} A)$ is a morphism of $\mathsf{R}$-relative averaging algebras. 
\end{defn}

\begin{defn}
Let $M \xrightarrow{P} A$ be a relative averaging algebra. Two $\mathsf{R}$-deformations $(\mu_\mathsf{R}, l_\mathsf{R}, r_\mathsf{R}, P_\mathsf{R})$ and $(\mu'_\mathsf{R}, l'_\mathsf{R}, r'_\mathsf{R}, P'_\mathsf{R})$ are said to be {\bf equivalent} if there exists an isomorphism of $\mathsf{R}$-relative averaging algebras
\begin{align*}
(\Phi, \Psi) : (\mathsf{R} \otimes_{\bf k} M \xrightarrow{P_\mathsf{R}} \mathsf{R} \otimes_{\bf k} A) \rightsquigarrow (\mathsf{R} \otimes_{\bf k} M \xrightarrow{P'_\mathsf{R}} \mathsf{R} \otimes_{\bf k} A)
\end{align*}
satisfying $(\varepsilon \otimes_{\bf k} \mathrm{id}_A) \circ \Phi = (\varepsilon \otimes_{\bf k} \mathrm{id}_A)$ and $(\varepsilon \otimes_{\bf k} \mathrm{id}_M) \circ \Psi = (\varepsilon \otimes_{\bf k} \mathrm{id}_M)$.
\end{defn}

We will now consider the cases, when $\mathsf{R} = {\bf k}[[t]]$ (the ring of formal power series) and $\mathsf{R} = {\bf k}[[t]] /(t^2)$ (the local Artinian ring of dual numbers). In the first case, a $\mathsf{R}$-deformation is called a formal deformation, and in the second case, a $\mathsf{R}$-deformation is called an infinitesimal deformation. A more precise description of formal deformation is given by the following.

\begin{defn}\label{formal-definition}
(i) Let $M_{l_M , r_M} \xrightarrow{P} A_\mu$ be a given relative averaging algebra. A {\bf formal deformation} of it consists of a quadruple $(\mu_t, l_t, r_t, P_t)$ of formal sums
\begin{align}\label{formal-component}
\mu_t = \sum_{i=0}^\infty t^i \mu_i , \qquad l_t = \sum_{i=0}^\infty t^i l_i , \qquad r_t = \sum_{i=0}^\infty t^i r_i   ~~~~ \text{ and } ~~~~ P_t = \sum_{i=0}^\infty t^i P_i
\end{align}
(where $\mu_i : A \times A \rightarrow A$, $l_i : A \times M \rightarrow M$, $r_i : M \times A \rightarrow M$ and $P_i : M \rightarrow A$ are bilinear/linear maps, for $i \geq 0$, with $\mu_0 = \mu$, $l_0 = l_M$, $r_0 = r_M$ and $P_0 = P$) such that $A[[t]] = (A[[t]], \mu_t)$ is an associative algebra over ${\bf k} [[t]]$, and $M[[t]] = (M[[t]], l_t, r_t)$ is a bimodule over the algebra $A[[t]]$, and the ${\bf k}[[t]]$-linear map $P_t : M [[t]] \rightarrow A[[t]]$ is a relative averaging operator. In other words, $M [[t]] \xrightarrow{P_t} A[[t]]$ is a relative averaging algebra over ${\bf k}[[t]]$.

(ii) Two formal deformations $(\mu_t, l_t, r_t, P_t)$ and $(\mu_t', l_t', r_t', P_t')$ are {\bf equivalent} if there exists a pair $(\varphi_t, \psi_t)$ of formal sums
\begin{align*}
\varphi_t = \sum_{i=0}^\infty t^i \varphi_i \quad \text{ and } \quad \psi_t = \sum_{i=0}^\infty t^i \psi_i
\end{align*}   
(where $\varphi_i : A \rightarrow A$ and $\psi_i : M \rightarrow M$ are linear maps, for $i \geq 0$, with $\varphi_0 = \mathrm{id}_A$ and $\psi_0 = \mathrm{id}_M$) such that
\begin{align*}
(\varphi_t, \psi_t) : (M[[t]] \xrightarrow{P_t} A[[t]]) \rightsquigarrow (M[[t]] \xrightarrow{P_t'} A[[t]])
\end{align*}
is an isomorphism of relative averaging algebras over ${\bf k} [[t]]$. Then we write $(\mu_t, l_t, r_t, P_t) \sim (\mu_t', l_t', r_t', P_t').$
\end{defn}

It follows from the above definition that a quadruple $(\mu_t, l_t, r_t, P_t)$ given by (\ref{formal-component}) is a formal deformation of the relative averaging algebra $M \xrightarrow{P} A$ if the following system of equations are hold:
\begin{align}
\sum_{i+j = n} \mu_i (\mu_j (a, b), c) =~& \sum_{i+j = n} \mu_i (a, \mu_j (b, c)), \label{defeqn-1}\\
\sum_{i+j = n} l_i (\mu_j (a, b), u) =~& \sum_{i+j = n} l_i (a, l_j (b, u)), \label{defeqn-2}\\
\sum_{i+j = n} r_i (l_j (a, u), b) =~& \sum_{i+j = n} l_i (a, r_j (u,b)), \label{defeqn-3}\\
\sum_{i+j = n} r_i (r_j (u, a), b) =~& \sum_{i+j = n} r_i (u, \mu_j (a,b)), \label{defeqn-4}
\end{align}
\begin{align}
\sum_{i+j+k = n} \mu_i \big(  P_j (u), P_k (v)  \big) = \sum_{i+j+k = n} P_i \big( l_j (P_k (u), v)  \big) =~& \sum_{i+j+k = n} P_i \big(  r_j (u, P_k (v))  \big), \label{defeqn-5}
\end{align}
for all $a, b, c \in A$, $u,v \in M$ and $n \geq 0$. These are called the deformation equations. Note that the deformation equations are held for $n=0$ as $M_{l_M, r_M} \xrightarrow{P} A_\mu$ is a relative averaging algebra.

\medskip

\noindent \underline{For $n=1$.} It follows from (\ref{defeqn-1}) that
\begin{align*}
\mu_1 (a \cdot b, c) + \mu_1 (a, b) \cdot c = \mu_1 (a, b \cdot c) + a \cdot \mu_1 (b, c), \text{ for } a, b, c \in A,
\end{align*}
which is equivalent to $\delta_\mathrm{Hoch}(\mu_1) = 0$. To summarize the identities (\ref{defeqn-2}), (\ref{defeqn-3}), (\ref{defeqn-4}) for $n=1$, we define an element $\beta_1 \in \mathrm{Hom}(\mathcal{A}^{1,1}, M)$ by
\begin{align}\label{beta-1}
 \beta_1 (a, u) = l_1 (a, u) ~~ \text{ and } ~~ \beta_1 (u, a) = r_1 (u, a), \text{ for } a \in A, u \in M.
\end{align}
Then we get that $\delta_\mathrm{Hoch}^{\mu_1} (\beta_1) = 0$. Finally, the identity (\ref{defeqn-5}) for $n=1$ is equivalent to
\begin{align*}
\big( \delta_\mathrm{Diass}^P (P_1) + h_P ( \mu_1 ,  \beta_1 ) \big) (y; u, v) = 0, \text{ for } y = \begin{tikzpicture}[scale=0.1]
\draw (6,0) -- (8,-2);    \draw (8,-2) -- (10,0);     \draw (8,-2) -- (8,-4);     \draw (9,-1) -- ( 8,0);
\end{tikzpicture} ~,~ 
\begin{tikzpicture}[scale=0.1]
\draw (0,0) -- (2,-2); \draw (2,-2) -- (4,0); \draw (2,-2) -- (2,-4); \draw (1,-1) -- (2,0); 
\end{tikzpicture} \text{ and } u, v \in M.
\end{align*}
Combining these, we get that $\delta_\mathrm{rAvg} (\mu_1, \beta_1, P_1) = 0.$ In other words, $(\mu_1, \beta_1, P_1) \in Z^2_\mathrm{rAvg} (M \xrightarrow{P} A)$ is a $2$-cocycle in the cochain complex of the relative averaging algebra $M \xrightarrow{P} A$ with coefficients in the adjoint bimodule.

\medskip

Next, let $(\mu_t, l_t, r_t, P_t)$ and $(\mu'_t, l'_t, r'_t, P'_t)$ be two equivalent formal deformations of a relative averaging algebra $M_{l_M, r_M} \xrightarrow{P} A_\mu$. Then it follows from Definition \ref{formal-definition}(ii) that there exists a pair $(\varphi_t, \psi_t)$ of formal sums such that the followings are hold:
\begin{align*}
\sum_{i+j=n} \varphi_i (\mu_j (a, b)) =~& \sum_{i+j+k =n} \mu_i' \big(  \varphi_j (a) , \varphi_k (b) \big), \\
\sum_{i+j=n} \psi_i (l_j (a, u)) =~& \sum_{i+j+k =n} l_i' \big(  \varphi_j (a) , \psi_k (u) \big),\\
\sum_{i+j=n} \psi_i (r_j (u,a)) =~& \sum_{i+j+k =n} r_i' \big(  \psi_j (u) , \varphi_k (a) \big),\\
\sum_{i+j = n} \varphi_i \circ P_j =~& \sum_{i+j=n} P_i' \circ \psi_j,
\end{align*}
for $a, b \in A$ and $u \in M$. As before, all these relations are hold for $n =0$ as $\varphi_0 = \mathrm{id}_A$ and $\psi_0 = \mathrm{id}_M$. However, for $n=1$, we get that
\begin{align*}
\quad \mu_1 (a, b) - \mu_1' (a, b) =~& a \cdot \varphi_1 (b) + \varphi_1(a) \cdot b  - \varphi_1 (a \cdot b), \\
\beta_1 (a, u) - \beta_1' (a, u) =~& a \cdot_M \psi_1 (u) + \varphi_1 (a) \cdot_M u - \psi_1 (a \cdot_M u), \\
\beta_1 (u, a) - \beta_1' (u, a) =~&  u \cdot_M \varphi_1 (a) + \psi_1(u) \cdot_M a - \psi_1 (u \cdot_M a),\\
\quad P_1 - P_1' =~& P \circ \psi_1 - \varphi_1 \circ P.
\end{align*}
Combining these relations, we get that
\begin{align*}
(\mu_1, \beta_1, P_1) - (\mu_1', \beta_1', P_1) = \big(  \delta_\mathrm{Hoch} (\varphi_1), \delta_\mathrm{Hoch}^{\varphi_1} (\psi_1), h_P (\varphi_1, \psi_1)  \big) = \delta_\mathrm{rAvg} ((\varphi_1, \psi_1)).
\end{align*}
Thus, equivalent formal deformations give rise to cohomologous $2$-cocycles.

\medskip

As mentioned earlier that an infinitesimal deformation of a relative averaging algebra is a $\mathsf{R}$-deformation, where $\mathsf{R} = {\bf k}[[t]]/(t^2)$. Thus, an infinitesimal deformation can be regarded as a truncated version (modulo $t^2$) of formal deformation. Equivalences between infinitesimal deformations can be defined similarly.

\begin{thm}
Let $M \xrightarrow{P} A$ be a relative averaging algebra. Then there is a bijection
\begin{align*}
(\text{infinitesimal deformations of } M \xrightarrow{P} A)/ \sim \quad \leftrightsquigarrow \quad H^2_\mathrm{rAvg} (M \xrightarrow{P} A).
\end{align*}
\end{thm}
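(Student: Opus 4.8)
The plan is to construct the bijection explicitly and verify it in both directions, drawing on the $n=1$ computations already carried out above for formal deformations. Given an infinitesimal deformation, write $\mu_t = \mu + t\mu_1$, $l_t = l_M + t l_1$, $r_t = r_M + t r_1$ and $P_t = P + t P_1$, and assemble $l_1, r_1$ into the single map $\beta_1 \in \mathrm{Hom}(\mathcal{A}^{1,1}, M)$ defined by (\ref{beta-1}). Since $\mathsf{R} = {\bf k}[[t]]/(t^2)$, the deformation equations (\ref{defeqn-1})--(\ref{defeqn-5}) survive only for $n=0$ and $n=1$; the $n=0$ equations hold because $M_{l_M,r_M} \xrightarrow{P} A_\mu$ is itself a relative averaging algebra, while the $n=1$ equations are, as computed above, precisely equivalent to $\delta_\mathrm{rAvg}(\mu_1, \beta_1, P_1) = 0$. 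Thus assigning $(\mu_1, \beta_1, P_1) \in Z^2_\mathrm{rAvg} (M \xrightarrow{P} A)$ to each infinitesimal deformation gives a well-defined map, and composing with the projection $Z^2_\mathrm{rAvg} \to H^2_\mathrm{rAvg}$ yields a map on infinitesimal deformations.

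First I would check that this map descends to equivalence classes and is injective. If two infinitesimal deformations are equivalent via a pair $(\varphi_t, \psi_t) = (\mathrm{id}_A + t\varphi_1, \mathrm{id}_M + t\psi_1)$, then the $n=1$ part of the equivalence relations computed above gives $(\mu_1, \beta_1, P_1) - (\mu_1', \beta_1', P_1') = \delta_\mathrm{rAvg}((\varphi_1, \psi_1))$, so the two associated $2$-cocycles are cohomologous and the map is well-defined on classes. Conversely, if two infinitesimal deformations have cocycles differing by $\delta_\mathrm{rAvg}((\varphi_1, \psi_1))$, then the same pair $(\mathrm{id}_A + t\varphi_1, \mathrm{id}_M + t\psi_1)$ furnishes an equivalence; here one uses that modulo $t^2$ the morphism conditions for $(\varphi_t, \psi_t)$ reduce exactly to the $n=1$ relations, hence hold automatically once the cohomological identity does. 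This gives injectivity.

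For surjectivity, given any $2$-cocycle $(\mu_1, \beta_1, P_1) \in Z^2_\mathrm{rAvg} (M \xrightarrow{P} A)$, I would set $\mu_t = \mu + t\mu_1$, extract $l_1, r_1$ from $\beta_1$ via (\ref{beta-1}), and form $l_t = l_M + t l_1$, $r_t = r_M + t r_1$, $P_t = P + t P_1$. Working modulo $t^2$, the deformation equations at $n=0$ hold by hypothesis and those at $n=1$ are exactly the vanishing of $\delta_\mathrm{rAvg}(\mu_1, \beta_1, P_1)$, so this quadruple is a genuine infinitesimal deformation whose associated class is the given one. The main point to handle carefully is the backward direction of the equivalence claim: one must confirm that a pair $(\mathrm{id}_A + t\varphi_1, \mathrm{id}_M + t\psi_1)$ is indeed an isomorphism of $\mathsf{R}$-relative averaging algebras satisfying the augmentation-compatibility conditions, which relies on the fact that truncation at $t^2$ collapses all higher morphism constraints, leaving only the single cohomological identity already established. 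Combining well-definedness, injectivity and surjectivity then yields the desired bijection.
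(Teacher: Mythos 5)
Your proposal is correct and follows essentially the same route as the paper: both reduce the infinitesimal deformation equations to the $n=1$ case computed for formal deformations, identify the associated triple $(\mu_1,\beta_1,P_1)$ as a $2$-cocycle, and match equivalences of deformations with coboundaries $\delta_\mathrm{rAvg}((\varphi_1,\psi_1))$. The only difference is presentational — you verify well-definedness, injectivity and surjectivity of one map, whereas the paper constructs two mutually inverse maps $\Gamma$ and $\Theta$ — which amounts to the same argument.
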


\begin{proof}
Let $(\mu_t = \mu + t \mu_1, l_t = l + t l_1, r_t = r+ t r_1, P_t = P + t P_1)$ be an infinitesimal deformation of the relative averaging algebra $M \xrightarrow{P} A$. Then one can show that (similar to formal deformation) the triple $(\mu_1, \beta_1, P_1) \in Z^2_\mathrm{rAvg} (M \xrightarrow{P} A)$ is a $2$-cocycle, where $\beta_1 \in \mathrm{Hom} (\mathcal{A}^{1,1} , M)$ is given by (\ref{beta-1}). Moreover, equivalent infinitesimal deformations give rise to cohomologous $2$-cocycles. Hence, there is a well-defined map
\begin{align*}
\Gamma : (\text{infinitesimal deformations of } M \xrightarrow{P} A)/ \sim ~~ \rightarrow ~~ H^2_\mathrm{rAvg} (M \xrightarrow{P} A).
\end{align*}

To obtain a map in the other direction, we first consider a $2$-cocycle $(\mu_1, \beta_1, P_1) \in Z^2_\mathrm{rAvg} (M \xrightarrow{P} A)$. Then it is easy to see that the $2$-cocycle $(\mu_1, \beta_1, P_1)$ induces an infinitesimal deformation 
\begin{align*}
(\mu_t = \mu + t \mu_1, l_t = l + t l_1, r_t = r+ t r_1, P_t = P + t P_1)
\end{align*}
of the relative averaging algebra $M \xrightarrow{P} A$, where the maps $l_1, r_1$ are defined from $\beta_1$ by (\ref{beta-1}). Let $(\mu_1', \beta_1', P_1')$ be another $2$-cocycle cohomologous to $(\mu_1, \beta_1, P_1)$, i.e. $(\mu_1, \beta_1, P_1) - (\mu_1' , \beta_1', P_1') = \delta_\mathrm{rAvg} ((\varphi_1, \psi_1))$, for some $(\varphi_1, \psi_1) \in C^1_\mathrm{rAvg} (M \xrightarrow{P} A)$. Then it is easy to verify that the corresponding infinitesimal deformations $(\mu_t, l_t, r_t, P_t)$ and $(\mu'_t, l'_t, r'_t, P'_t)$ are equivalent via the pair $(\varphi_t = \mathrm{id}_A + t \varphi_1, \psi_t = \mathrm{id}_M + t \psi_1)$. As a consequence, we obtain a map
\begin{align*}
\Theta : H^2_\mathrm{rAvg} (M \xrightarrow{P} A) ~~ \rightarrow ~~ (\text{infinitesimal deformations of } M \xrightarrow{P} A)/ \sim.
\end{align*}
Finally, it is routine task to check that the maps $\Gamma$ and $\Theta$ are inverses to each other. This completes the proof.
\end{proof}

\medskip

\section{Abelian extensions of relative averaging algebras}\label{sec-7}
Our aim in this section is to study abelian extensions of a relative averaging algebra $M \xrightarrow{P} A$ by a bimodule $(N \xrightarrow{Q} B, l, r)$ of it. We show that the isomorphism classes of such abelian extensions are in bijective correspondence with the second cohomology group $H^2_\mathrm{rAvg} (M \xrightarrow{P} A; N \xrightarrow{Q} B)$.

Let $M \xrightarrow{P} A$ be a relative averaging algebra and $N \xrightarrow{Q} B$ be a $2$-term chain complex (not necessarily a bimodule). Note that $N \xrightarrow{Q} B$ can be regarded as a relative averaging algebra with the trivial associative multiplication on $B$ and the trivial $B$-bimodule structure on $N$. With this consideration, we have the following definition.

\begin{defn}
An {\bf abelian extension} of a relative averaging algebra $M \xrightarrow{P} A$ by a $2$-term chain complex $N \xrightarrow{Q} B$ is a relative averaging algebra $\widehat{M} \xrightarrow{\widehat{P}} \widehat{A}$ with a short exact sequence of relative averaging algebras of the form
\begin{align}\label{abl-fig}
\xymatrix{
0 \ar[r] & N \ar[r]^{\overline{i}} \ar[d]_Q & \widehat{M} \ar[r]^{\overline{p}} \ar[d]^{\widehat{P}} &  M \ar[r] \ar[d]^P & 0 \\
0 \ar[r] & B \ar[r]_i & \widehat{A}  \ar[r]_p & A \ar[r] & 0.
}
\end{align}
Sometimes, we denote an abelian extension as above by the relative averaging algebra $\widehat{M} \xrightarrow{\widehat{P}} \widehat{A}$ when the exact sequence is understood.
\end{defn}

A section of the abelian extension (\ref{abl-fig}) is given by a pair $(s, \overline{s})$ of linear maps $s : A \rightarrow \widehat{A}$ and $\overline{s} : M \rightarrow \widehat{M}$ satisfying $p \circ s = \mathrm{id}_A$ and $\overline{p} \circ  \overline{s} = \mathrm{id}_M$. Given any section $(s, \overline{s})$, we define two bilinear maps (both denoted by the same notation) $\cdot_B : A \times B \rightarrow B$ and $\cdot_B : B \times A \rightarrow B$ by
\begin{align*}
a \cdot_B b = s(a) \cdot_{\widehat{A}} i(b) ~~~~ \text{ and } ~~~~ b \cdot_B a = i(b) \cdot_{\widehat{A}} s(a), \text{ for } a \in A, b \in B.
\end{align*}
These two maps make $B$ into an $A$-bimodule. Similarly, there are two bilinear maps (both denoted by the same notation) $\cdot_N : A \times N \rightarrow N$ and $\cdot_N : N \times A \rightarrow N$ given by
\begin{align*}
a \cdot_N n = s(a) \cdot_{\widehat{M}} \overline{i}(n) ~~~~ \text{ and } ~~~~ n \cdot_N a = \overline{i}(n) \cdot_{\widehat{M}} s(a), \text{ for } a \in A, n \in N.
\end{align*}
Here $\cdot_{\widehat{M}}$ denotes both the left and right $\widehat{A}$-actions on $\widehat{M}$. These two maps make $N$ into an $A$-bimodule. Finally, we define bilinear maps $l: M \times B \rightarrow N$ and $r: B \times M \rightarrow N$ by
\begin{align*}
l(u, b) = \overline{s}(u) \cdot_{\widehat{M}} i(b) ~~~~ \text{ and } ~~~~ r (b, u) = i( b) \cdot_{\widehat{M}} \overline{s}(u), \text{ for } u \in M, b \in B.
\end{align*}
It is straightforward to see that the maps $l, r$ satisfy the identities (\ref{dasm1}) and (\ref{dasm2}). Finally, for any $u \in M$ and $n \in N$,
\begin{align*}
    P(u) \cdot_B Q(n) = sP(u) \cdot_{\widehat{A}} i Q (n) = \widehat{P} \overline{s} (u) \cdot_{\widehat{A}} \widehat{P} \overline{i}(n)=~& \begin{cases}  =  \widehat{P} \big( \widehat{P}(\overline{s}(u)) \cdot_{\widehat{M}} \overline{i}(n)   \big)\\
    = \widehat{P} \big(  \overline{s}(u) \cdot_{\widehat{M}} \widehat{P} \overline{i}(n) \big)
    \end{cases} \\
  =~&  \begin{cases}  =  \widehat{P} \big( {s}P(u) \cdot_{\widehat{M}} \overline{i}(n)   \big) ~=~  Q \big( P(u) \cdot_N n   \big), \\
    = \widehat{P} \big(  \overline{s}(u) \cdot_{\widehat{M}} {i}Q(n) \big) ~=~ Q \big(  l (u, Q(n)) \big).
    \end{cases}
\end{align*}
Similarly, one can show that
\begin{align*}
    Q(n) \cdot_B P(u) =  i Q (n) \cdot_{\widehat{A}} s P (u) = \widehat{P} \overline{i}(n) \cdot_{\widehat{A}} \widehat{P} \overline{s} (u) =~& \begin{cases}  = \widehat{P} \big(  \widehat{P} \overline{i}(n ) \cdot_{\widehat{M}} \overline{s}(u) \big)\\
    = \widehat{P} \big( \overline{i}(n) \cdot_{\widehat{M}} \widehat{P} \overline{s}(u) \big)
    \end{cases} \\
    =~& \begin{cases}
   = \widehat{P} \big( i Q (n) \cdot_{\widehat{M}} \overline{s}(u)   \big)   ~=~ Q \big( r (Q(n), u)   \big), \\
 =  \widehat{P} \big(  \overline{i}(n) \cdot_{\widehat{M}} s P(u)  \big) ~=~ Q (n \cdot_N P(u)).
    \end{cases}
\end{align*}
Combining all these, we get that $(N \xrightarrow{Q} B, l, r)$ is a bimodule over the relative averaging algebra $M \xrightarrow{P} A$. This is called the induced bimodule structure starting from the abelian extension $(\ref{abl-fig})$. Note that this bimodule structure is independent of the choice of section. To see this, let $(s', \overline{s}')$ be any other section of $(\ref{abl-fig})$. Then we observe that $s(a) - s'(a) \in \mathrm{ker} (p) = \mathrm{im} (i)$ and $\overline{s} (u) - \overline{s}'(u) \in \mathrm{ker} (\overline{p}) = \mathrm{im} (\overline{i})$, for $a \in A$ and $u \in M$. Let $\cdot_B'$, $\cdot_N'$ and $l', r'$ be the maps induced by the section $(s', \overline{s}')$. Then we have
\begin{align*}
 &   a \cdot_B b - a \cdot_B' b = \big(  s(a) - s'(a) \big) \cdot_{\widehat{A}} i(b ) = 0 ~~ \text{ and } ~~ b \cdot_B a - b \cdot_B' a = i(b)  \cdot_{\widehat{A}} \big(  s(a) - s'(a) \big) = 0,\\
 &   a \cdot_N n - a \cdot_N' n = \big(  s(a) - s'(a) \big) \cdot_{\widehat{M}} \overline{i}(n) = 0 ~~ \text{ and } ~~ n \cdot_N a - n \cdot_N' a = \overline{i}(n) \cdot_{\widehat{M}} \big(  s(a) - s'(a) \big) = 0,\\
  &  l (u, b) - l'( u,b) = \big( \overline{s}(u) - \overline{s}' (u)  \big) \cdot_{\widehat{M}} i(b) = 0 ~~ \text{ and } ~~ r(b, u) - r' (b,u) = i(b) \cdot_{\widehat{M}} \big( \overline{s}(u) - \overline{s}' (u)  \big) = 0.
\end{align*}
Hence our claim follows.

\begin{defn}
Let $M \xrightarrow{P} A$ be a relative averaging algebra and $N \xrightarrow{Q} B$ be a $2$-term chain complex. Two abelian extensions $\widehat{M} \xrightarrow{ \widehat{P} } \widehat{A}$ and $\widehat{M}' \xrightarrow{ \widehat{P}' } \widehat{A}'$ are said to be {\bf isomorphic} if there is an isomorphism $(\varphi, \psi) :  (\widehat{M} \xrightarrow{ \widehat{P} } \widehat{A}) \rightsquigarrow  (\widehat{M}' \xrightarrow{ \widehat{P}' } \widehat{A}') $ of relative averaging algebras that makes the following diagram commutative
\begin{align}
\xymatrixrowsep{0.36cm}
\xymatrixcolsep{0.36cm}
\xymatrix{
0 \ar[rr] &  & N \ar[rr] \ar[dd] \ar@{=}[rd] & & \widehat{M} \ar[rr] \ar[rd]^\psi \ar[dd] & & M \ar[dd] \ar[rr] \ar@{=}[rd] & & 0 \\
 & 0 \ar[rr] & & N \ar[rr] \ar[dd] & & \widehat{M}' \ar[rr] \ar[dd] & & M \ar[rr] \ar[dd] & & 0 \\
0 \ar[rr] &  & B \ar[rr] \ar@{=}[rd] & & \widehat{A} \ar[rr] \ar[rd]^\varphi & & A \ar[rr] \ar@{=}[rd] & & 0 \\
 & 0 \ar[rr] & & B \ar[rr] & & \widehat{A}' \ar[rr] & & A \ar[rr] & & 0. \\
}
\end{align}
\end{defn}

Let $\widehat{M} \xrightarrow{ \widehat{p} } \widehat{A}$ and  $\widehat{M}' \xrightarrow{ \widehat{p}' } \widehat{A}'$ be two isomorphic abelian extensions as in the above definition. Then it is easy to see that the corresponding induced bimodules on the $2$-term chain complex $N \xrightarrow{Q} B$ are the same.

\begin{notation}
Let $M \xrightarrow{P} A$ be a relative averaging algebra and $(N \xrightarrow{Q} B. l, r)$ be a given bimodule over it. We denote by $\mathrm{Ext} (M \xrightarrow{ {P} } {A} ; N \xrightarrow{Q} B ) $ the set of all isomorphism classes of abelian extensions of $M \xrightarrow{P} A$ by the $2$-term complex $N \xrightarrow{Q} B$ so that the induced bimodule coincides with the prescribed one.
\end{notation}

In the following result, we parametrize the space $\mathrm{Ext} (M \xrightarrow{ {P} } {A} ; N \xrightarrow{Q} B ) $ by the second cohomology group of the relative averaging algebra.

\begin{thm}
Let $M \xrightarrow{P} A$ be a relative averaging algebra and $(N \xrightarrow{Q} B, l, r)$ be a given bimodule over it. Then there is a bijective correspondence between $\mathrm{Ext} (M \xrightarrow{ {P} } {A} ; N \xrightarrow{Q} B ) $ and the second cohomology group $H^2_\mathrm{rAvg} ( M \xrightarrow{ {P} } {A} ; N \xrightarrow{Q} B).$
\end{thm}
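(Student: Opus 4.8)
The plan is to set up the standard dictionary between abelian extensions and $2$-cocycles by means of a section. Given an abelian extension $\widehat{M} \xrightarrow{\widehat{P}} \widehat{A}$ as in $(\ref{abl-fig})$, I would first fix a section $(s, \overline{s})$ (which exists since we work over a field) and then measure the failure of $(s, \overline{s})$ to be a morphism of relative averaging algebras. Identifying $B = \ker(p) = \mathrm{im}(i)$ and $N = \ker(\overline{p}) = \mathrm{im}(\overline{i})$ along the injections, I define a triple $(f, g, \gamma) \in C^2_\mathrm{rAvg}(M \xrightarrow{P} A; N \xrightarrow{Q} B)$ by $f(a,b) = s(a) \cdot_{\widehat{A}} s(b) - s(a \cdot b)$, by $g(a,u) = s(a) \cdot_{\widehat{M}} \overline{s}(u) - \overline{s}(a \cdot_M u)$ together with $g(u,a) = \overline{s}(u) \cdot_{\widehat{M}} s(a) - \overline{s}(u \cdot_M a)$, and by $\gamma(u) = \widehat{P}(\overline{s}(u)) - s(P(u))$. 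Because $p, \overline{p}$ are morphisms and the diagram commutes, each of these expressions indeed lands in $B$ or $N$ as required.

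Next I would verify that $(f, g, \gamma)$ is a $2$-cocycle. The vanishing of $\delta_\mathrm{Hoch}(f)$ and of $\delta_\mathrm{Hoch}^f(g)$ is the usual computation expressing the associativity of $\cdot_{\widehat{A}}$ and the $\widehat{A}$-bimodule axioms of $\widehat{M}$ in terms of the section. The genuinely new input is the third component: the identity $\delta_\mathrm{Diass}^P(\gamma) + h_{P,Q}(f,g) = 0$ must be extracted from the relative averaging operator identity $(\ref{avg-iden})$ for $\widehat{P}$ applied to $\overline{s}(u), \overline{s}(v)$. Expanding $\widehat{P}(\overline{s}(u)) \cdot_{\widehat{A}} \widehat{P}(\overline{s}(v))$ and the two expressions $\widehat{P}(\widehat{P}(\overline{s}(u)) \cdot_{\widehat{M}} \overline{s}(v))$ and $\widehat{P}(\overline{s}(u) \cdot_{\widehat{M}} \widehat{P}(\overline{s}(v)))$ using $\widehat{P}\overline{s} = sP + i\gamma$ and the definitions of $f, g$, the averaging identity should organize precisely into the two tree components of $\delta_\mathrm{Diass}^P(\gamma) + h_{P,Q}(f,g)$ over $Y_2$. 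I would then show that a second section $(s', \overline{s}')$ differs from $(s, \overline{s})$ by maps $\varphi_1 : A \to B$ and $\psi_1 : M \to N$, and that the resulting cocycles differ by $\delta_\mathrm{rAvg}((\varphi_1, \psi_1))$; this is the same computation already carried out for equivalences of formal deformations in Section~\ref{sec-6}. Together with the remark that isomorphic extensions yield equal induced bimodules and cohomologous cocycles, this produces a well-defined map $\mathrm{Ext}(M \xrightarrow{P} A; N \xrightarrow{Q} B) \to H^2_\mathrm{rAvg}(M \xrightarrow{P} A; N \xrightarrow{Q} B)$.

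For the inverse, given a $2$-cocycle $(f, g, \gamma)$ I would build an extension on $\widehat{A} = A \oplus B$ and $\widehat{M} = M \oplus N$ by twisting the semidirect product of Theorem~\ref{thm-semid}: set $(a,b) \cdot_{\widehat{A}} (a',b') = (a \cdot a', a \cdot_B b' + b \cdot_B a' + f(a,a'))$, define the $\widehat{A}$-actions on $\widehat{M}$ by adding $g(a,u)$ and $g(u,a)$ to the respective $N$-components of the semidirect-product actions $(\ref{apb})$, and put $\widehat{P}(u,n) = (P(u), Q(n) + \gamma(u))$. The cocycle conditions $\delta_\mathrm{Hoch}(f) = 0$, $\delta_\mathrm{Hoch}^f(g) = 0$ and $\delta_\mathrm{Diass}^P(\gamma) + h_{P,Q}(f,g) = 0$ are exactly what is needed for $\cdot_{\widehat{A}}$ to be associative, for $\widehat{M}$ to be an $\widehat{A}$-bimodule, and for $\widehat{P}$ to be a relative averaging operator, respectively; thus $\widehat{M} \xrightarrow{\widehat{P}} \widehat{A}$ is an abelian extension realizing the prescribed induced bimodule. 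Finally I would check that cohomologous cocycles produce isomorphic extensions, with the isomorphism built from $(\mathrm{id}_A, \mathrm{id}_M)$ corrected by $(\varphi_1, \psi_1)$, and that the two assignments are mutually inverse up to the choices made.

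The main obstacle I anticipate is the verification of the third cocycle component, namely matching the relative averaging identity for $\widehat{P}$ with $\delta_\mathrm{Diass}^P(\gamma) + h_{P,Q}(f,g) = 0$ on both planar binary trees of $Y_2$, since this is where the associative data $(f,g)$, the operator defect $\gamma$, and the two ``$\dashv_P$''/``$\vdash_P$'' directions of the induced diassociative structure all interact. The bookkeeping of which inner term is evaluated under $Q$ versus paired off by $l$ or $r$ must be tracked carefully against the explicit formula for $h_{P,Q}$. All remaining verifications are the standard, if lengthy, extension-theoretic computations.
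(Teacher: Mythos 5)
Your proposal is correct and follows essentially the same route as the paper's proof: the cocycle $(\alpha,\beta,\gamma)$ attached to a section, the independence of the section and of the representative of the isomorphism class, the inverse construction by twisting the semidirect product with the cocycle data, and the isomorphism $(\varphi,\psi)=(\mathrm{id}+\kappa,\mathrm{id}+\eta)$ for cohomologous cocycles all match the paper. The only difference is one of emphasis: you flag the verification of $\delta_\mathrm{Diass}^P(\gamma)+h_{P,Q}(\alpha,\beta)=0$ as the delicate step, which the paper leaves as ``easy to verify.''
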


\begin{proof}
Let $\widehat{M} \xrightarrow{\widehat{P}} \widehat{A}$ be an abelian extension of the relative averaging algebra $M \xrightarrow{P} A$ by the $2$-term complex $N \xrightarrow{Q} B$ representing an element in $\mathrm{Ext}(M \xrightarrow{P} A ; N \xrightarrow{Q} B)$. Let $(s, \overline{s})$ be a section. Then we define a triple $(\alpha, \beta, \gamma)$ of maps
\begin{align*}
&\alpha \in \mathrm{Hom}(A^{\otimes 2}, B), \quad \alpha (a, b) = s(a) \cdot_{\widehat{A}} s(b) - s(a \cdot b),\\
&\beta \in \mathrm{Hom}(\mathcal{A}^{1,1}, N), \quad \begin{cases}
\beta (a, u) = s(a) \cdot_{\widehat{M}} \overline{s}(u) - \overline{s} (a \cdot_M u),\\
\beta (u, a) = \overline{s}(u) \cdot_{\widehat{M}} {s}(a) - \overline{s} (u \cdot_M a),
\end{cases}\\
&\gamma \in \mathrm{Hom}(M, B), \quad \gamma (u) = (\widehat{P} \circ \widehat{s} - s \circ P)(u),
\end{align*}
for $a, b \in A$ and $u \in M$. Then it is easy to verify that $(\alpha, \beta, \gamma) \in Z^2_\mathrm{rAvg}(M \xrightarrow{P} A; N \xrightarrow{Q} B)$ is a $2$-cocycle in the cohomology complex of the relative averaging algebra $M \xrightarrow{P} A$ with coefficients in the bimodule $(N \xrightarrow{Q} B, l, r)$. Moreover, the corresponding cohomology class in $H^2_\mathrm{rAvg}(M \xrightarrow{P} A; N \xrightarrow{Q} B)$ doesn't depend on the choice of the section.

Let $\widehat{M} \xrightarrow{\widehat{P}} \widehat{ A}$ and $\widehat{M}' \xrightarrow{\widehat{P}'} \widehat{ A}'$ be two isomorphic abelian extensions. For any section $(s, \overline{s})$ of the first abelian extension, we have
\begin{align*}
p' \circ (\varphi \circ s) = p \circ s = \mathrm{id}_A \quad \text{ and } \quad \overline{p}' \circ (\psi \circ \overline{s}) = \overline{p} \circ \overline{s} = \mathrm{id}_M.
\end{align*}
Thus $(\varphi \circ s , \psi \circ \overline{s})$ is a section of the second abelian extension. If $(\alpha', \beta', \gamma') \in Z^2_\mathrm{rAvg}(M \xrightarrow{P} A; N \xrightarrow{Q} B)$ is the $2$-cocycle corresponding to the second abelian extension and its section $(\varphi \circ s, \psi \circ \overline{s})$, then
\begin{align*}
\alpha' (a, b) =~& (\varphi \circ s)(a) \cdot_{\widehat{A}'} (\varphi \circ s)(b) - (\varphi \circ s)(a \cdot b) \\
=~& \varphi \big(  s(a) \cdot_{\widehat{A}} s(b) - s(a \cdot b)  \big) = \varphi (\alpha (a, b)) = \alpha (a, b) \quad (\because \varphi|_B = \mathrm{id}_B).
\end{align*}
Similarly, one can show that $\beta' = \beta$ and $\gamma' = \gamma$. Thus, we obtain $(\alpha, \beta, \gamma) = (\alpha', \beta', \gamma')$. As a consequence, we obtain a well-defined map
\begin{align*}
\Lambda : \mathrm{Ext}(M \xrightarrow{P} A; N \xrightarrow{Q} B) ~ \rightarrow ~ H^2_\mathrm{rAvg} (M \xrightarrow{P} A; N \xrightarrow{Q} B).
\end{align*}

To obtain a map in the other direction, we take a $2$-cocycle $(\alpha, \beta, \gamma) \in Z^2_\mathrm{rAvg} (M \xrightarrow{P} A ; N \xrightarrow{Q} B)$. Take $\widehat{A} = A \oplus B$ and $\widehat{M} = M \oplus N$, and consider bilinear maps
\begin{align*}
\mu_{\widehat{A}} : \widehat{A} \times \widehat{A} \rightarrow \widehat{A}, \quad & \mu_{\widehat{A}} \big( (a, b), (a', b')   \big) = \big( a \cdot a', a \cdot_B b' + b \cdot_B a' + \alpha (a, a')   \big), \\
l_{\widehat{M}} : \widehat{A} \times \widehat{M} \rightarrow \widehat{M}, \quad & l_{\widehat{M}} \big( (a, b), (u,n)  \big) = \big(  a \cdot_M u, a \cdot_N n + r (b, u) + \beta(a, u) \big), \\
r_{\widehat{M}} : \widehat{M} \times \widehat{A} \rightarrow \widehat{M}, \quad & r_{\widehat{M}} \big( (u,n), (a,b)  \big) = \big( u \cdot_M a, l(u, b) + n \cdot_N a + \beta (u, a)  \big),
\end{align*}
for $(a, b), (a', b') \in \widehat{A}$ and $(u,n) \in \widehat{M}$. Then it is easy to see that $(\widehat{A}, \mu_{\widehat{A}})$ is an associative algebra and $(\widehat{M}, l_{\widehat{M}}, r_{\widehat{M}})$ is a bimodule over it. Finally, we define a map $\widehat{P} : \widehat{M} \rightarrow \widehat{A}$ by
\begin{align*}
\widehat{P}((u,n)) = \big( P(u), Q(n) + \gamma (u)   \big), \text{ for } (u,n) \in \widehat{M}.
\end{align*}
Then $\widehat{P}$ is a relative averaging operator. In other words, $\widehat{M} \xrightarrow{\widehat{P}} \widehat{A}$ is a relative averaging algebra. This is an abelian extension of the relative averaging algebra $M \xrightarrow{P} A$ by the $2$-term chain complex $N \xrightarrow{Q} B$, and defines an element in $\mathrm{Ext} (M \xrightarrow{P} A; N \xrightarrow{Q} B)$. Finally, let $(\alpha, \beta, \gamma)$ and $(\alpha', \beta', \gamma')$ be two cohomologous $2$-cocycles, say $(\alpha, \beta, \gamma) - (\alpha', \beta', \gamma') = \delta_\mathrm{rAvg} ((\kappa, \eta))$, for some $(\kappa, \eta) \in C^1_{\mathrm{rAvg}} (  M \xrightarrow{P} A; N \xrightarrow{Q} B )$. If $\widehat{M}' \xrightarrow{\widehat{P}'} \widehat{A}'$ is the relative averaging algebra induced by the $2$-cocycle $(\alpha', \beta', \gamma')$, then the pair of maps
\begin{align*}
(\varphi, \psi) : (\widehat{M} \xrightarrow{\widehat{P}} \widehat{A}) \rightsquigarrow (\widehat{M}' \xrightarrow{\widehat{P}'} \widehat{A}')
\end{align*}
is an isomorphism of abelian extensions, where $\varphi : \widehat{A} \rightarrow \widehat{A}'$, $\varphi (a, b) = (a, b + \kappa (a))$ and $\psi : \widehat{M} \rightarrow \widehat{M}'$, $\psi (u,n) = (u, n + \eta (u))$. This shows that there is a well-defined map
\begin{align*} 
\Upsilon : H^2_\mathrm{rAvg} (  M \xrightarrow{P} A; N \xrightarrow{Q} B ) ~ \rightarrow ~ \mathrm{Ext}( M \xrightarrow{P} A; N \xrightarrow{Q} B).
\end{align*}
Finally, the maps $\Lambda$ and $\Upsilon$ are inverses to each other. This shows the required bijection.
\end{proof}

\medskip

\section{Homotopy relative averaging algebras and homotopy diassociative algebras}\label{sec-8}

In this section, we first consider $Diass_\infty$-algebras introduced by Loday. However, our definition is more simple to use. Next, we 
introduce the homotopy relative averaging operators and homotopy relative averaging algebras. We show that a homotopy relative averaging algebra naturally induces a $Diass_\infty$-algebra structure. We first recall some basic definitions related to $A_\infty$-algebras \cite{keller}.

\begin{defn}
An {\bf $A_\infty$-algebra} is a pair $({A}, \{ \mu_k \}_{k=1}^\infty)$ consisting of a graded vector space ${A} = \oplus_{i \in \mathbb{Z}} {A}_i$ together with a collection $\{ \mu_k \}_{k=1}^\infty$ of degree $1$ graded linear maps $\mu_k : {A}^{\otimes k} \rightarrow {A}$, for $k \geq 1$, satisfying the following identities (called higher associativities)
\begin{align}\label{a-inf-iden}
    \sum_{k+l=n+1} \sum_{i=1}^{n-l+1} (-1)^{|a_1| + \cdots + |a_{i-1}|}~ \mu_k  \big(a_1, \ldots, a_{i-1}, \mu_l (a_i, \ldots, a_{i+l-1}), a_{i+l}, \ldots, a_{n} ) = 0,
\end{align}
for all $n \geq 1$ and homogeneous elements $a_1, \ldots, a_n \in {A}.$
\end{defn}

Let ${A} = \oplus_{i \in \mathbb{Z}} {A}_i$ be a graded vector space. Let $\overline{T}(A) = \oplus_{n = 1}^\infty A^{\otimes n}$ be the free tensor algebra over the graded vector space $A$. For each $n \in \mathbb{Z}$, let $C^n (A, A) := \mathrm{Hom}_n ( \overline{T}(A), A)$ be the space of all degree $n$ graded linear maps from the graded vector space $\overline{T}(A)$ to $A$. Thus, an element $\mu \in C^n (A,A)$ is given by a sum $\mu = \sum_{k = 1}^\infty \mu_k$, where $\mu_k : A^{\otimes k} \rightarrow A$ is a degree $n$ linear map, for $k \geq 1$. For $\mu = \sum_{k = 1}^\infty \mu_k \in C^m (A,A)$ and $\nu = \sum_{l = 1}^\infty \nu_l \in C^n(A,A)$, we define a bracket $[\mu, \nu] \in C^{m+n}(A,A)$ by
\begin{align*}
    [\mu, \nu]:= \sum_{s=1}^\infty \sum_{k+l = s + 1} \big(  \mu_k \circ \nu_l - (-1)^{mn}~ \nu_l \circ \mu_k  \big), \text{ where }
\end{align*}
\begin{align*}
    (\mu_k \circ \nu_l) (a_1, \ldots, a_{s}) = \sum_{i=1}^{s-l+1} (-1)^{|a_1| + \cdots + |a_{i-1}|}~ \mu_k \big(   a_1, \ldots, a_{i-1}, \nu_l (a_i, \ldots, a_{i+l-1}), a_{i+l}, \ldots, a_{s} \big).
\end{align*}
The graded vector space $\oplus_{n \in \mathbb{Z}} C^n (A,A)$ with the above bracket is a graded Lie algebra. An element $\mu = \sum_{k = 1}^\infty \mu_k \in C^1(A,A)$ is a Maurer-Cartan element of the graded Lie algebra $( \oplus_{n \in \mathbb{Z}} C^n (A,A), [~,~])$ if and only if the pair $(A, \{ \mu_k \}_{k=1}^\infty)$ is an $A_\infty$-algebra.

\medskip

Let $(A, \{ \mu_k \}_{k=1}^\infty)$ be an $A_\infty$-algebra. A {\bf representation} of this $A_\infty$-algebra is given by a pair $(M, \{ \eta_k \}_{k=1}^\infty)$ that consists of a graded vector space $M = \oplus_{i \in \mathbb{Z}} M_i$ with a collection $\{ \eta_k : \mathcal{A}^{k-1,1} \rightarrow M \}_{k=1}^\infty$ of degree $1$ linear maps satisfying the identities (\ref{a-inf-iden}) when exactly one of the variables $a_1, \ldots, a_n$ comes from $M$ and the corresponding linear operation $\mu_i$ or $\mu_j$ replaced by $\eta_i$ or $\eta_j$. Like the ungraded case, here $\mathcal{A}^{k-1,1}$ denotes the direct sum of all possible tensor powers of $A$ and $M$ in which $A$ appears $k-1$ times (and hence $M$ appears exactly once). Note that any $A_\infty$-algebra $(A, \{ \mu_k \}_{k=1}^\infty)$ can be realized as a representation of itself, where $\eta_k = \mu_k$, for $k \geq 1$.

\begin{defn}
A {\bf $Diass_\infty$-algebra} (also called a {\bf strongly homotopy diassociative algebra}) is a pair $(D, \{ \pi_k \}_{k=1}^\infty)$ consisting of a graded vector space $D= \oplus_{i \in \mathbb{Z}} D_i$ equipped with a collection of degree $1$ graded linear maps $\{ \pi_k : {\bf k}[Y_k] \otimes D^{\otimes k} \rightarrow D \}_{k=1}^\infty$ satisfying the following set of identities
\begin{align}\label{higher-diass}
    \sum_{k+l=n+1} \sum_{i=1}^{n-l+1} (-1)^{|a_1| + \cdots + |a_{i-1}|} \pi_k \big(  R_0^{k;i,l} (y); a_1, \ldots, a_{i-1}, \pi_l \big(  R_i^{k; i, l} (y); a_i, \ldots, a_{i+l-1}   \big), a_{i+l}, \ldots, a_n   \big) = 0,
\end{align}
for all $n \geq 1$, $y \in Y_n$ and homogeneous elements $a_1, \ldots, a_n \in D$. The maps $R_0^{k; i,l}$ and $R_i^{k; i, l}$ are described in (\ref{r0-maps}), (\ref{ri-maps}).
\end{defn} 

Note that any diassociative algebra can be realized as a $Diass_\infty$-algebra concentrated in degree $-1$. More precisely, if $(D, \dashv, \vdash)$ is a diassociative algebra then $s^{-1}D$ (considered as a graded vector space with $(s^{-1}D)_{-1} = D$ and $(s^{-1} D)_i = 0$ for $i \neq -1$) can be given a $Diass_\infty$-algebra structure with the operations $\{ \pi_k : {\bf k}[Y_k] \otimes (s^{-1} D)^{\otimes k} \rightarrow s^{-1} D \}_{k=1}^\infty$ given by
\begin{align*}
   \pi_2 ( \begin{tikzpicture}[scale=0.15]
\draw (6,0) -- (8,-2);  \draw (8,-2) -- (10,0); \draw (8,-2) -- (8,-4);   \draw (9,-1) -- ( 8,0);
\end{tikzpicture} ; s^{-1} a, s^{-1} b) = s^{-1} (a \dashv b), \quad  \pi_2 ( \begin{tikzpicture}[scale=0.15]
\draw (6,0) -- (8,-2);  \draw (8,-2) -- (10,0); \draw (8,-2) -- (8,-4);   \draw (7,-1) -- ( 8,0);
\end{tikzpicture} ; s^{-1} a, s^{-1} b) = s^{-1} (a \dashv b) ~~ \text{ and } \pi_k = 0 \text{ for } k \neq 2.
\end{align*}

\begin{remark}
    Let $(D, \{ \pi_k \}_{k=1}^\infty)$ be any $Diass_\infty$-algebra. Using the higher diassociative identities (\ref{higher-diass}) and the mathematical induction on $k$, we can show that
    \begin{align*}
        \pi_k (y; a_1, \ldots, a_k) = \pi_k (y'; a_1, \ldots, a_k), \text{ for } a_1, \ldots, a_k \in D,
    \end{align*}
    when both of $y, y' \in Y_k$ can be written as the grafting of a $(i-1)$-tree and $(k-i)$-tree.
\end{remark}

\begin{prop}
    Let $(A, \{ \mu_k \}_{k=1}^\infty)$ be an $A_\infty$-algebra and $(M, \{ \eta_k \}_{k=1}^\infty)$ be a representation of it. Then the graded vector space $A \oplus M$ can be equipped with a $Diass_\infty$-algebra structure with the operations $\{ \pi_k : {\bf k}[Y_k] \otimes (A \oplus M)^{\otimes k} \rightarrow A \oplus M \}_{k=1}^\infty$ given by
    \begin{align}\label{pi-k}
        \pi_k \big( y; (a_1, u_1), \ldots, (a_k, u_k) \big) = \big( \mu_k (a_1, \ldots, a_k ), \eta_k (a_1, \ldots, a_{i-1}, u_i, a_{i+1}, \ldots, a_k) \big),
    \end{align}
    for $k \geq 1$, $y \in Y_k$ (which can be uniquely written as $y = y_1 \vee y_2$ for some $(i-1)$-tree $y_1 \in Y_{i-1}$ and $(k-i)$-tree $y_2 \in Y_{k-i}$) and $(a_1, u_1), \ldots, (a_k, u_k ) \in A \oplus M.$
\end{prop}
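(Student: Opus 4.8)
The plan is to check the higher diassociative identities (\ref{higher-diass}) for the operations $\{\pi_k\}$ directly, mimicking the strict computation of Proposition \ref{am-diass} but now one arity at a time. Using the splitting $(A\oplus M)^{\otimes n}\cong\oplus_{k+l=n}\mathcal{A}^{k,l}$ together with the multilinearity of $\mu_k$ and $\eta_k$, formula (\ref{pi-k}) shows that $\pi_k$ vanishes on any tensor carrying two or more $M$-factors; hence it suffices to test (\ref{higher-diass}) on all-$A$ tensors (the \emph{$A$-component}) and on tensors with exactly one $M$-factor (the \emph{$M$-component}). I would also record the elementary fact that the decomposition $y=y_1\vee y_2$ into the root's left and right subtrees is unique, so the pivot index $i$ appearing in (\ref{pi-k}) is well defined and $\pi_k$ depends on $y$ only through $i$.

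For the $A$-component, the first slot of (\ref{pi-k}) is $\mu_k(a_1,\ldots,a_k)$, which ignores the tree argument. Thus the reduction trees $R_0^{k;i,l}$ and $R_i^{k;i,l}$ play no role on all-$A$ inputs, and the left-hand side of (\ref{higher-diass}) collapses to
\[
\sum_{k+l=n+1}\ \sum_{i=1}^{n-l+1}(-1)^{|a_1|+\cdots+|a_{i-1}|}\,\mu_k\big(a_1,\ldots,a_{i-1},\mu_l(a_i,\ldots,a_{i+l-1}),a_{i+l},\ldots,a_n\big),
\]
which is exactly the $A_\infty$-identity (\ref{a-inf-iden}) and therefore vanishes. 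The Koszul sign in (\ref{higher-diass}) matches the one in (\ref{a-inf-iden}) verbatim, so no sign reconciliation is required here.

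The substance lies in the $M$-component. Fix $y\in Y_n$ and put the unique $M$-entry in position $j$, all remaining inputs lying in $A$. By (\ref{pi-k}), the operation $\pi_k(y';\cdots)$ has nonzero $M$-output only when the $M$-entry occupies the pivot leaf of $y'$. Consequently, in a summand of (\ref{higher-diass}) the inner factor $\pi_l(R_i^{k;i,l}(y);a_i,\ldots,a_{i+l-1})$ is $M$-valued (governed by $\eta_l$) precisely when $j\in\{i,\ldots,i+l-1\}$ and the $M$-entry sits at the pivot of $R_i^{k;i,l}(y)$, its output then occupying slot $i$ of the outer factor; otherwise the inner factor is $A$-valued (governed by $\mu_l$) and the $M$-entry feeds the outer $\pi_k$ directly. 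In both situations the outer $\pi_k$ contributes to $M$ exactly when the relevant $M$-slot coincides with the pivot of $R_0^{k;i,l}(y)$, and each surviving term realizes by $\eta$ exactly those operations whose argument list meets the $M$-entry (the outer operation always, and the inner one precisely when $j$ lies in its block). Tracking the pivot through these reductions, the surviving summands assemble into precisely the defining identity of the representation $(M,\{\eta_k\})$, namely (\ref{a-inf-iden}) with the $M$-touching operations replaced by the corresponding $\eta$'s, whence the $M$-component vanishes as well.

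The one place genuine work is required is the pivot bookkeeping, which I would isolate as a combinatorial lemma on planar binary trees: from the iterated face-map formulas (\ref{r0-maps}) and (\ref{ri-maps}), compute $\mathrm{piv}(R_0^{k;i,l}(y))$ and $\mathrm{piv}(R_i^{k;i,l}(y))$ in terms of $\mathrm{piv}(y)$ and $(i,l)$, and verify that as $(k,l,i)$ range over the index set of (\ref{higher-diass}) the surviving configurations reproduce each term of the representation identity once and with the correct sign. This lemma is naturally established by induction on the number of face maps applied, with the boundary slots $j=1$ and $j=n$ treated separately, since the conventions $\star_0,\star_n$ of Section \ref{sec-2} enter there. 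A cleaner but less self-contained alternative, which I would mention, packages the $A_\infty$-datum together with its representation as a single Maurer--Cartan element of a Majumdar--Mukherjee-type graded Lie algebra on $CY^\bullet(A\oplus M,A\oplus M)$, generalizing the element $\Delta$ of (\ref{del-de}); then (\ref{higher-diass}) follows from the vanishing of the associated bracket, but the essential input is once more the compatibility of the pivot rule with grafting of trees.
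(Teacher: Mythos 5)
The paper states this proposition without proof, so there is no in-text argument to compare yours against; I am assessing the proposal on its own terms. Your overall strategy is sound: the bidegree reduction (only all-$A$ tensors and single-$M$ tensors need to be tested), the observation that the $A$-component of (\ref{higher-diass}) collapses to the $A_\infty$-identity (\ref{a-inf-iden}) because $\mu_k$ ignores the tree argument, and the identification of the pivot-tracking lemma as the only real content are all correct, and the signs do match as you claim.

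Two points need sharpening before this is a proof. First, your assertion that ``the surviving summands assemble into precisely the defining identity of the representation'' describes only half of what actually happens, and phrasing it this way hides the step where the argument could fail. Fix $y\in Y_n$ with pivot $p$ and place the $M$-entry in position $j$. A direct check (for instance $n=3$, $y$ the tree of pivot $2$ obtained by grafting the two $1$-trees, $j=1$) shows that when $j\neq p$ \emph{every} summand of (\ref{higher-diass}) vanishes individually, so the identity holds as $0=0$; only when $j=p$ do \emph{all} summands survive and reproduce the representation identity with the $M$-variable in slot $p$. If for some pair $(y,j)$ only a proper subset of the summands survived, the argument would collapse, because the representation identity is a single relation and cannot be verified term by term. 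So your combinatorial lemma must deliver this all-or-nothing dichotomy, not merely that each surviving term matches a term of the representation identity. The precise statement needed is: if $i\le p\le i+l-1$ then $R_i^{k;i,l}(y)$ has pivot $p-i+1$ and $R_0^{k;i,l}(y)$ has pivot $i$; if $p<i$ then $R_0^{k;i,l}(y)$ has pivot $p$; and if $p\ge i+l$ then $R_0^{k;i,l}(y)$ has pivot $p-l+1$. From this both the dichotomy and the term-by-term matching follow. Second, this lemma --- which you correctly flag as the one place where genuine work is required --- is left unproved; it is true, and your proposed induction on the face maps in (\ref{r0-maps}) and (\ref{ri-maps}) does establish it, but as written your text is a plan for the key step rather than the step itself. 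With the lemma supplied and the dichotomy made explicit, the argument is complete.
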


We denote the above $Diass_\infty$-algebra simply by $A \oplus_{Diass_\infty} M$. Note that $A \oplus_{Diass_\infty} M$ generalizes the diassociative algebra of Proposition \ref{am-diass} in the homotopy context. It is important to mention that the converse of the above proposition is also true. More precisely, let $A = \oplus_{i \in \mathbb{Z}} A_i$ and $M = \oplus_{i \in \mathbb{Z}} M_i$ be two graded vector spaces equipped with two collections $\{ \mu_k : A^{\otimes k} \rightarrow A \}_{k=1}^\infty$ and $\{ \eta_k : \mathcal{A}^{k-1,1} \rightarrow M \}_{k=1}^\infty$ of degree $1$ graded linear maps. Then $(A, \{ \mu_k \}_{k=1}^\infty)$ is an $A_\infty$-algebra and $(M, \{ \eta_k \}_{k=1}^\infty)$ is a representation if and only if $(A \oplus M, \{ \pi_k \}_{k=1}^\infty)$ is a $Diass_\infty$-algebra, where the maps $\pi_k$'s are given in (\ref{pi-k}).

In the following, we construct a graded Lie algebra whose Maurer-Cartan elements correspond to $Diass_\infty$-algebra structures on a given graded vector space. Let $D = \oplus_{i \in \mathbb{Z}} D_i$ be a graded vector space. For each $n \in \mathbb{Z} $, we define the space $CY^n (D, D) := \mathrm{Hom}_n ( {\bf k}[\overline{Y}] \otimes \overline{T}(D), D )$ whose elements are of the form $\pi = \sum_{k=1}^\infty \pi_k$, where $\pi_k : {\bf k}[Y_k] \otimes D^{\otimes k} \rightarrow D$ is a degree $n$ linear map. For $\pi = \sum_{k=1}^\infty \pi_k \in CY^m (D,D)$ and $\varpi = \sum_{l=1}^\infty \varpi_l \in CY^l (D,D)$, we define an element $\{  \! [ \pi, \varpi ] \! \} \in CY^{m+n} (D,D)$ by
\begin{align}\label{di-bracket}
   \{  \! [ \pi, \varpi ] \! \} = \sum_{s=1}^\infty \sum_{k+l=s+1} (\pi_k \diamond \varpi_l - (-1)^{mn} \varpi_l \diamond \pi_k), ~ \text{ where } 
\end{align}
\begin{align*}
    (\pi_k \diamond \varpi_l) (y; a_1, \ldots, a_s) = \sum_{i=1}^{s-l+1} (-1)^{|a_1| + \cdots + |a_{i-1}|} \pi_k \big( R_0^{k; i, l} (y); a_1, \ldots,  \varpi_l (   R_i^{k; i, l} (y); a_i, \ldots, a_{i+l-1} ), \ldots, a_s \big),
\end{align*}
for $y \in Y_s = Y_{k+l-1}$ and homogeneous elements $a_1, \ldots, a_s \in D$. The bracket $\{ \! [ ~, ~ [ \! \}$ is the graded version of the Majumdar-Mukherjee bracket given in (\ref{mm-circ}). In particular, the bracket $\{ \! [ ~, ~ [ \! \}$  makes the graded space $\oplus_{n \in \mathbb{Z}} CY^n (D,D) = \oplus_{n \in \mathbb{Z}} \mathrm{Hom}_n ( {\bf k}[\overline{Y}] \otimes \overline{T}(D), D )$ into a graded Lie algebra.

\begin{prop}
    Let $D$ be a graded vector space. Then there is a one-to-one correspondence between $Diass_\infty$-algebra structures on the graded vector space $D$ and Maurer-Cartan elements of the graded Lie algebra $(\oplus_{n \in \mathbb{Z}} CY^n (D,D), \{ \! [ ~,~ ] \! \} ).$
\end{prop}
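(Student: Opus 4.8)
The plan is to prove this exactly as in the $A_\infty$ case recalled just above, by identifying a $Diass_\infty$-algebra structure with a degree $1$ Maurer-Cartan element. First I would note that, by the very definition of $CY^n(D,D) = \mathrm{Hom}_n({\bf k}[\overline{Y}] \otimes \overline{T}(D), D)$, an element $\pi \in CY^1(D,D)$ is precisely a formal sum $\pi = \sum_{k=1}^\infty \pi_k$ of degree $1$ graded linear maps $\pi_k : {\bf k}[Y_k] \otimes D^{\otimes k} \rightarrow D$; this is the same data as a candidate $Diass_\infty$-structure. Since the bracket $\{ \! [ ~,~ ] \! \}$ has already been declared to make $\oplus_n CY^n(D,D)$ into a graded Lie algebra (with zero differential), such a $\pi$ is a Maurer-Cartan element if and only if $\{ \! [ \pi, \pi ] \! \} = 0$. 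So the whole statement reduces to showing that the vanishing of $\{ \! [ \pi, \pi ] \! \}$ is equivalent to the higher diassociativity identities (\ref{higher-diass}).

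Next I would expand $\{ \! [ \pi, \pi ] \! \}$ using (\ref{di-bracket}) with both arguments equal to $\pi$. Here $m = n = \deg \pi = 1$, so the sign $(-1)^{mn}$ equals $-1$ and each summand $\pi_k \diamond \pi_l - (-1)^{mn} \pi_l \diamond \pi_k$ becomes $\pi_k \diamond \pi_l + \pi_l \diamond \pi_k$. Reindexing the inner sum via the involution $(k,l) \mapsto (l,k)$ on $\{ (k,l) : k + l = s+1 \}$, the two families of terms coincide, and I obtain $\{ \! [ \pi, \pi ] \! \} = 2 \sum_{s=1}^\infty \sum_{k+l=s+1} \pi_k \diamond \pi_l$.

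I would then evaluate this on a tree $y \in Y_n$ and homogeneous elements $a_1, \ldots, a_n \in D$. Directly from the formula for $\diamond$ in (\ref{di-bracket}), the component of $\sum_{k+l=n+1} \pi_k \diamond \pi_l$ on $(y; a_1, \ldots, a_n)$ equals
\[
\sum_{k+l=n+1} \sum_{i=1}^{n-l+1} (-1)^{|a_1| + \cdots + |a_{i-1}|}\, \pi_k \big( R_0^{k;i,l}(y); a_1, \ldots, \pi_l ( R_i^{k;i,l}(y); a_i, \ldots, a_{i+l-1} ), \ldots, a_n \big),
\]
which is precisely the left-hand side of the higher diassociativity identity (\ref{higher-diass}) for this $n$ and $y$. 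Hence $\{ \! [ \pi, \pi ] \! \}$ evaluated on $(y; a_1, \ldots, a_n)$ equals twice that left-hand side. Because the base field has characteristic $0$, the vanishing of $\{ \! [ \pi, \pi ] \! \}$ on every $y \in Y_n$ and all $n \geq 1$ is equivalent to all the identities (\ref{higher-diass}) holding, that is, to $(D, \{ \pi_k \}_{k=1}^\infty)$ being a $Diass_\infty$-algebra. This yields the desired bijection, with the $\pi_k$ of the structure read off as the components of the Maurer-Cartan element $\pi$.

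The main work, and the only content beyond the graded-Lie structure that is already granted, is the combinatorial bookkeeping in the last step: I expect the delicate point to be verifying that the tree-level substitution maps $R_0^{k;i,l}$ and $R_i^{k;i,l}$ appearing in the partial composition $\diamond$ match, term by term and sign by sign, those appearing in (\ref{higher-diass}). Since both are built from the same face-map composites (\ref{r0-maps})--(\ref{ri-maps}), this is a direct comparison, but one must track the Koszul signs $(-1)^{|a_1| + \cdots + |a_{i-1}|}$ carefully and confirm that the symmetrization genuinely produces the overall factor $2$ (rather than leaving cross-terms that would obstruct the identification).
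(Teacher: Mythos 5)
Your proposal is correct and follows essentially the same route as the paper's own (much terser) proof: identify a degree~$1$ element $\pi\in CY^1(D,D)$ with the collection $\{\pi_k\}_{k=1}^\infty$ and observe from (\ref{di-bracket}) that $\{\![\pi,\pi]\!\}=0$ is exactly the system of higher diassociativity identities. Your expansion showing $\{\![\pi,\pi]\!\}=2\sum_{k+l=s+1}\pi_k\diamond\pi_l$ and the appeal to characteristic $0$ correctly fill in the details the paper leaves implicit.
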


\begin{proof}
    Note that an element $\pi \in CY^1 (D,D) = \mathrm{Hom}_1 ( {\bf k}[\overline{Y}] \otimes \overline{T}(D), D )$ is equivalent to having a collection $\{\pi_k : {\bf k}[Y_k] \otimes D^{\otimes k} \rightarrow D \}_{k=1}^\infty $ of degree $1$ graded linear maps. Then it follows from (\ref{di-bracket}) that $\{ \! [ \pi, \pi ] \! \} = 0$ if and only if the collection $\{ \pi_k \}_{k=1}^\infty$ defines a $Diass_\infty$-algebra structure on $D$. 
\end{proof}

Given an $A_\infty$-algebra and a representation of it, we will now introduce the notion of a homotopy relative averaging operator. Let $(A, \{ \mu_k \}_{k=1}^\infty)$ be an $A_\infty$-algebra and $(M, \{ \eta_k \}_{k=1}^\infty)$ be a representation of it. Consider the graded Lie algebra 
\begin{align*}
    \mathfrak{g} = \big( \oplus_{n \in \mathbb{Z}} CY^n (A \oplus M, A \oplus M) = \oplus_{n \in \mathbb{Z}} \mathrm{Hom}_n (    {\bf k} [\overline{Y}] \otimes \overline{T}(A \oplus M), A \oplus M )  , \{ \! [ ~, ~] \! \}  \big)
\end{align*}
associated to the graded vector space $A \oplus M.$ Then it is easy to see that the graded subspace $\mathfrak{a} = \oplus_{n \in \mathbb{Z}} CY^n (M, A) =  \oplus_{n \in \mathbb{Z}} \mathrm{Hom}_n (    {\bf k} [\overline{Y}] \otimes \overline{T}(M), A )$ is an abelian Lie subalgebra of $\mathfrak{g}$. Let $p : \mathfrak{g} \rightarrow \mathfrak{g}$ be the projection map onto the subspace $\mathfrak{a}$. On the other hand, since $A \oplus_{Diass_\infty} M = (A \oplus M, \{ \pi_k \}_{k=1}^\infty)$ is a $Diass_\infty$-algebra, it defines a Maurer-Cartan element $\pi = \sum_{k=1}^\infty \pi_k \in CY^1 (A \oplus M, A \oplus M)$ of the graded Lie algebra $\mathfrak{g}$ (i.e. $\{ \! [ \pi, \pi ] \! \} = 0$). Further, the element $\pi \in \mathrm{ker}(p)_1$. Hence we obtain a $V$-data $(\mathfrak{g}, \mathfrak{a}, p, \pi)$. Therefore, by Theorem \ref{v-data-l} (i), the graded vector space $\mathfrak{a}$ inherits a $L_\infty$-algebra structure with the operations $\{ l_k : \mathfrak{a}^{\otimes k} \rightarrow \mathfrak{a} \}_{k=1}^\infty$ given by
\begin{align*}
    l_k (   \gamma_1, \ldots, \gamma_k ) =  p \{ \! [ \cdots \{ \! [     \{ \! [ \pi, \gamma_1   ] \! \} , \gamma_2 ] \! \}  , \ldots, a_k ] \! \}, 
\end{align*}
for homogeneous $ \gamma_1, \ldots, \gamma_k \in \mathfrak{a}$. This $L_\infty$-algebra can be seen as the homotopy analogue of the graded Lie algebra given in Theorem \ref{mc-thm-opp}. Our next definition is motivated by the Maurer-Cartan characterization of a relative averaging operator given in Theorem \ref{mc-thm-opp}.

\begin{defn}
   % Let $(A, \{ \mu_k \}_{k=1}^\infty)$ be an $A_\infty$-algebra and $(M, \{ \eta_k \}_{k=1}^\infty)$ be a representation of it. 
    A {\bf homotopy relative averaging operator} on  $(M, \{ \eta_k \}_{k=1}^\infty)$ over the $A_\infty$-algebra  $(A, \{ \mu_k \}_{k=1}^\infty)$ is a Maurer-Cartan element of the $L_\infty$-algebra $( \mathfrak{a}, \{ l_k \}_{k=1}^\infty).$
\end{defn}

It follows from the above definition that a homotopy relative averaging operator is an element $P = \sum_{k=1}^\infty P_k \in \mathrm{Hom}_0 (   {\bf k}[\overline{Y}] \otimes \overline{T} (M), A)$ that satisfies
\begin{align}\label{homo-avg}
    \sum_{k=1}^\infty \frac{1}{k!} l_k (P, \ldots, P) = 0.
\end{align}
In other words, $P$ must satisfy $\sum_{ k=1}^\infty \frac{1}{k!} p \{ \! [ \cdots \{ \! [     \{ \! [ \pi, P  ] \! \} , P ] \! \}  , \ldots, P ] \! \} = 0$, which is equivalent to the condition that $p (e^{ \{ \! [ -, P ] \! \}} \pi ) = 0$. Note that a homotopy relative averaging operator can be equivalently described by a collection  $P = \{ P_k : {\bf k}[Y_k] \otimes M^{\otimes k} \rightarrow A \}_{k=1}^\infty$ of degree $0$ linear maps satisfying $p (e^{ \{ \! [ -, P ] \! \}} \pi ) = 0$.

\begin{defn}
A {\bf homotopy relative averaging algebra} is a triple $(A, M, P)$ consisting of an $A_\infty$-algebra $A = (A, \{\mu_k \}_{k=1}^\infty )$, a representation $M= (M, \{ \eta_k \}_{k=1}^\infty)$ and a homotopy relative averaging operator $P = \{ P_k \}_{k=1}^\infty$. We often denote a homotopy relative averaging algebra as above by $M \xrightarrow{ \{ P_k \}_{k=1}^\infty } A$.
\end{defn}

\begin{prop}\label{diass-inf-ind}
    Let $M \xrightarrow{  \{ P_k \}_{k=1}^\infty } A$ be a homotopy relative averaging algebra. Then $(M, \{\pi_k^P \}_{k=1}^\infty)$ is a $Diass_\infty$-algebra, where
    \begin{align*}
        \pi_k^P (y; u_1, \ldots, u_k) = (e^{ \{ \! [ -, P ] \! \}} \pi ) (y; u_1, \ldots, u_k), \text{ for } k \geq 1, y \in Y_k \text{ and } u_1, \ldots, u_k \in M.
    \end{align*}
\end{prop}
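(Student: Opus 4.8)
The plan is to recognize the family $\{\pi_k^P\}$ as the restriction to $M$ of a \emph{twisted} Maurer--Cartan element on the ambient graded Lie algebra $\mathfrak{g} = \big(\oplus_n CY^n(A\oplus M, A\oplus M), \{ \! [ ~, ~ ] \! \}\big)$, and then to use the homotopy relative averaging condition to show that $M$ is closed under the twisted operations, hence a sub-$Diass_\infty$-algebra.

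First I would record that, since $A \oplus_{Diass_\infty} M = (A\oplus M, \{\pi_k\})$ is a $Diass_\infty$-algebra, the element $\pi = \sum_{k=1}^\infty \pi_k$ is a Maurer--Cartan element of $\mathfrak{g}$, i.e. $\{ \! [ \pi, \pi ] \! \} = 0$; moreover $\pi \in \mathrm{ker}(p)_1$. Because $P = \sum_{k=1}^\infty P_k$ lies in $\mathfrak{g}_0$, the operator $\{ \! [ -, P ] \! \} : \mathfrak{g} \to \mathfrak{g}$ is an even derivation of $\{ \! [ ~, ~ ] \! \}$, as follows from the graded Jacobi identity together with graded antisymmetry. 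Under the weak filtration assumption, which guarantees convergence of all the relevant infinite sums, the exponential $e^{\{ \! [ -, P ] \! \}}$ is then a well-defined degree $0$ automorphism of $(\mathfrak{g}, \{ \! [ ~, ~ ] \! \})$. Since automorphisms preserve Maurer--Cartan elements,
\begin{align*}
\{ \! [ e^{\{ \! [ -, P ] \! \}}\pi, \ e^{\{ \! [ -, P ] \! \}}\pi ] \! \} = e^{\{ \! [ -, P ] \! \}} \{ \! [ \pi, \pi ] \! \} = 0,
\end{align*}
so the twisted element $\widetilde{\pi} := e^{\{ \! [ -, P ] \! \}}\pi = \sum_{k=1}^\infty \widetilde{\pi}_k \in CY^1(A\oplus M, A\oplus M)$ is again Maurer--Cartan; equivalently $(A\oplus M, \{\widetilde{\pi}_k\}_{k=1}^\infty)$ is a $Diass_\infty$-algebra. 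By construction $\pi_k^P$ is exactly the restriction $\widetilde{\pi}_k|_{{\bf k}[Y_k]\otimes M^{\otimes k}}$.

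It remains to see that these restrictions define a $Diass_\infty$-structure on $M$ alone, and this is where the hypothesis on $P$ enters. As $P$ is a homotopy relative averaging operator it satisfies $\sum_{k=1}^\infty \tfrac{1}{k!} l_k(P,\ldots,P) = 0$, which, using $p(\pi)=0$, is precisely the statement $p(\widetilde{\pi}) = 0$. Since $p$ projects onto $\mathfrak{a} = \oplus_n CY^n(M,A)$, this vanishing says that the $A$-valued component of each $\widetilde{\pi}_k$ on all-$M$ inputs is zero; consequently $\widetilde{\pi}_k({\bf k}[Y_k]\otimes M^{\otimes k}) \subseteq M$ for every $k$, so $M$ is closed under all the operations $\widetilde{\pi}_k$. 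Therefore $M$ is a sub-$Diass_\infty$-algebra of $(A\oplus M, \{\widetilde{\pi}_k\})$: the higher diassociative identities (\ref{higher-diass}) for $\widetilde{\pi}$, evaluated on inputs from $M$, involve only intermediate values that again lie in $M$, and so they restrict verbatim to identities for $\{\pi_k^P\}$. This proves that $(M, \{\pi_k^P\}_{k=1}^\infty)$ is a $Diass_\infty$-algebra.

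I expect the main obstacle to lie in the two structural claims underlying this argument: verifying that $\{ \! [ -, P ] \! \}$ is genuinely a derivation and that $e^{\{ \! [ -, P ] \! \}}$ converges to an automorphism in the weakly filtered setting, and confirming the identification of the Maurer--Cartan equation with the closure condition $p(\widetilde{\pi}) = 0 \Leftrightarrow \widetilde{\pi}_k(M^{\otimes k}) \subseteq M$. Once these are established, transferring the higher diassociative identities from $A\oplus M$ to $M$ is routine, and the result generalizes Proposition \ref{am-diass} to the homotopy context.
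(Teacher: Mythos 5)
Your proposal is correct and follows essentially the same route as the paper: show that $e^{\{\![ -,P ]\!\}}\pi$ is again a Maurer--Cartan element of $\mathfrak{g}$ (using that $e^{\{\![ -,P ]\!\}}$ is an automorphism of the graded Lie bracket), hence defines a $Diass_\infty$-structure on $A\oplus M$, and then restrict to $M$. The paper leaves the restriction step implicit, whereas you correctly supply the missing justification --- namely that the Maurer--Cartan equation for $P$ is exactly $p(e^{\{\![ -,P ]\!\}}\pi)=0$, which forces $\widetilde{\pi}_k({\bf k}[Y_k]\otimes M^{\otimes k})\subseteq M$ so that $M$ is a sub-$Diass_\infty$-algebra.
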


\begin{proof}
    Note that
    \begin{align*}
         \{ \! [   e^{ \{ \! [ -, P ] \! \} } \pi, e^{ \{ \! [ -, P ] \! \} } \pi ] \! \} = e^{ \{ \! [ -, P ] \! \} }  \{ \! [  \pi, \pi ] \! \} = 0 ~~~ (\text{as }  \{ \! [  \pi, \pi ] \! \} = 0).
    \end{align*}
    This shows that $ e^{ \{ \! [ -, P ] \! \} } \pi$ is a Maurer-Cartan element of the graded Lie algebra $\mathfrak{g}$. Hence the collection of maps $\{ \pi_k \}_{k=1}^\infty$ defines a $Diass_\infty$-algebra structures on $M$, where $\pi_k = (e^{ \{ \! [ -, P ] \! \} } \pi )|_{  {\bf k}[Y_k] \otimes M^{\otimes k} }$, for $k \geq 1$. This completes the proof.
\end{proof}

%\begin{defn}
A homotopy relative averaging operator $\{ P_k \}_{k=1}^\infty$ is said to be {\bf strict} if $P_k = 0$ for $k \neq 1$. It follows from (\ref{homo-avg}) that a strict homotopy relative averaging operator is a degree $0$ linear map $P : M \rightarrow A$ that satisfies
\begin{align*}
    \mu_k \big(   P(u_1), \ldots, P(u_k) \big) = P \big( \eta_k (   P(u_1), \ldots, u_i, \ldots, P(u_k)  ) \big), \text{ for } k \geq 1 \text{ and } 1 \leq i \leq k.
\end{align*}
A strict homotopy relative averaging algebra is a triple that consists of an $A_\infty$-algebra, a representation and a strict homotopy relative averaging operator. In this case, Theorem \ref{diass-inf-ind} reads as follows.

\begin{thm}
    Let $M \xrightarrow{P} A$ be a strict homotopy relative averaging algebra. Then $(M, \{ \pi_k^P \}_{k=1}^\infty)$ is a $Diass_\infty$-algebra, where
    \begin{align*}
        \pi_k^P (y; u_1, \ldots, u_k) :=  \eta_k (   P(u_1), \ldots, u_i, \ldots, P(u_k)  ),
    \end{align*}
    for $k \geq 1$, $y \in Y_k$ (which can be uniquely written as $y= y_1 \vee y_2$ for some $(i-1)$-tree $y_1 \in Y_{i-1}$ and $(k-i)$-tree $y_2 \in Y_{k-i}$) and $u_1, \ldots, u_k \in M$.
\end{thm}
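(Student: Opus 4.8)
The plan is to deduce the statement from the homotopy version already established in Proposition~\ref{diass-inf-ind}, so that no fresh verification of the higher diassociative identities (\ref{higher-diass}) is needed; the only work is to compute the structure maps $\pi_k^P$ explicitly in the strict case. By Proposition~\ref{diass-inf-ind}, applied to the strict operator $P$ (regarded as the element $P = P_1 \in \mathfrak{a}$ with all higher components vanishing), the collection $\pi_k^P = (e^{\{ \! [ -, P ] \! \}}\pi)|_{{\bf k}[Y_k]\otimes M^{\otimes k}}$ already defines a $Diass_\infty$-algebra structure on $M$. Thus it suffices to show that this restriction equals $\eta_k(P(u_1),\ldots,u_i,\ldots,P(u_k))$, with the distinguished slot $i$ read off from the decomposition $y = y_1\vee y_2$.

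First I would isolate, inside $e^{\{ \! [ -, P ] \! \}}\pi = \sum_{n\ge 0}\frac{1}{n!}\{ \! [ \cdots \{ \! [ \pi, P ] \! \}, \ldots, P ] \! \}$, which summands can contribute to a map $M^{\otimes k}\to M$. Here I would invoke the bidegree bookkeeping of Proposition~\ref{deg-pre} in its graded form: the operator $\pi_k$ has bidegree $(k-1)|0$, the strict operator $P: M\to A$ has bidegree $-1|1$, and a structure map $M^{\otimes k}\to M$ has bidegree $0|(k-1)$. Since each bracketing with $P$ lowers the first index by $1$ and raises the second by $1$, the component of $e^{\{ \! [ -, P ] \! \}}\pi_k$ landing in the correct bidegree is produced by bracketing with $P$ exactly $k-1$ times, with prefactor $\frac{1}{(k-1)!}$; every other iterated bracket either vanishes on all-$M$ inputs or lands in a wrong summand.

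Next I would analyse the $(k-1)$-fold bracket $\{ \! [ \cdots \{ \! [ \pi_k, P ] \! \}, \ldots, P ] \! \}$ using the partial compositions $\diamond$ of (\ref{di-bracket}). Because $P$ has arity $1$ and outputs in $A$, each application of $\{ \! [ -, P ] \! \}$ either inserts $P$ into one input slot, turning an $M$-entry $(0,u_j)$ into the $A$-entry $(P(u_j),0)$ via the face data $R_i^{k;i,1}$ (which for arity-one $P$ leaves the tree essentially unchanged), or post-composes $P$ onto the output. For the output of the final cochain to lie in $M$, no post-composition is permitted, so all $k-1$ copies of $P$ are forced into input slots, filling $k-1$ of the $k$ arguments and leaving exactly one slot untouched. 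Reading the $M$-component of $\pi_k$ from its defining formula (\ref{pi-k}), namely $\eta_k(a_1,\ldots,a_{i-1},u_i,a_{i+1},\ldots,a_k)$, this component is nonzero only when the sole module input occupies the distinguished slot $i$ determined by $y = y_1\vee y_2$ with $y_1\in Y_{i-1}$; hence the untouched slot must be slot $i$, and inserting $P$ into the remaining slots gives $\eta_k(P(u_1),\ldots,P(u_{i-1}),u_i,P(u_{i+1}),\ldots,P(u_k))$ (the $A$-component $\mu_k$ vanishing since it receives the $0$ in slot $i$).

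Finally the combinatorics close up: there are $(k-1)!$ orderings in which the $k-1$ copies of $P$ can be fed into the fixed set of $k-1$ slots, and—because $P$ has degree $0$, so that the Koszul sign $(-1)^{(\cdots)|P|}$ accompanying each insertion in the $\diamond$-product of (\ref{di-bracket}) is trivial—these orderings contribute identically, producing a factor $(k-1)!$ that cancels the $\frac{1}{(k-1)!}$ from the exponential. This yields $\pi_k^P(y;u_1,\ldots,u_k) = \eta_k(P(u_1),\ldots,u_i,\ldots,P(u_k))$, as claimed. The main obstacle I anticipate is this last sign-and-coefficient reconciliation: checking that the $(k-1)!$ insertion orders really carry equal signs and that the tree/face-map bookkeeping in $\diamond$ consistently selects the slot $i$ prescribed by $y_1\vee y_2$. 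The bidegree count itself is clean and does the heavy lifting of annihilating every other term, so that the remaining verification is purely combinatorial.
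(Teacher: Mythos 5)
Your proposal is correct and takes essentially the same route as the paper: the paper gives no separate argument for this theorem, presenting it simply as Proposition \ref{diass-inf-ind} ``read in the strict case,'' which is exactly your reduction. Your bidegree bookkeeping isolating the $n=k-1$ term of $e^{\{ \! [ -, P ] \! \}}\pi$ and the $(k-1)!$ cancellation merely supply the computational details that the paper leaves implicit.
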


In the following, we show that any $Diass_\infty$-algebra is always induced from a strict strict homotopy relative averaging algebra. Let $(D, \{ \pi_k \}_{k=1}^\infty)$ be a given $Diass_\infty$-algebra. Consider the graded vector space $D/ I$ which is obtained from $D$ quotient by the homogeneous ideal $I$ generated by the set
\begin{align*}
     \{ \pi_k (y; a_1, \ldots, a_k) - \pi_k (y'; a_1, \ldots, a_k) ~|~ k \geq 1,~ y, y' \in Y_k \text{ and } a_1, \ldots, a_k \in D \}.
\end{align*}
It is easy to see that the graded vector space $D/I$ carries an $A_\infty$-algebra structure with the operations $\{ \mu_k : (D/I)^{\otimes k} \rightarrow D/I \}_{k=1}^\infty$ given by
\begin{align*}
    \mu_k (  [a_1], \ldots, [a_k]) = [\pi_k (y; a_1, \ldots, a_k)], \text{ for } k \geq 1 \text{ and } [a_1], \ldots, [a_k] \in D/I.
\end{align*}
We denote this $A_\infty$-algebra structure simply by $D_{\mathrm{Ass}_\infty}$. It is also easy to check that the $A_\infty$-algebra $D_{\mathrm{Ass}_\infty}$ has a representation on the graded vector space $D$ with the action maps
\begin{align*}
    \eta_k ([a_1], \ldots, [a_{i-1}], a_i, [a_{i+1}], \ldots, [a_k]) = \pi_k (y; a_1, \ldots, a_k),
\end{align*}
for $k \geq 1$, $[a_1], \ldots, [a_{i-1}], [a_{i+1}], \ldots, [a_k] \in D_{\mathrm{Ass}_\infty}$ and $a_i \in D$. Here $y \in Y_k$ is any $k$-tree which is the grafting of some $(i-1)$-tree and $(k-i)$-tree. Moreover, $D \xrightarrow{q} D_{\mathrm{Ass}_\infty}$ is a strict homotopy relative averaging algebra, where $q$ is the quotient map. Further, the induced $Diass_\infty$-algebra structure on $D$ coincides with the given one.

\medskip

\noindent {\bf Data availability statement.} Data sharing does not apply to this article as no datasets were generated or analysed during the current study.

\medskip

\noindent {\bf Acknowledgements.}  The author would like to thank Indian Institute of Technology (IIT) Kharagpur for providing the beautiful academic environment where the research has been carried out.
%\vspace*{1cm}

%\noindent {\bf Data Availability Statement.} Data sharing is not applicable to this article as no new data were created or analyzed in this study.

\end{document}